\documentclass{amsart}
\pdfoutput=1

\usepackage[T1]{fontenc}
\usepackage[utf8]{inputenc}
\usepackage[english]{babel}
\usepackage{lmodern}

\usepackage{a4wide}
\usepackage{graphicx}
\usepackage{color}
\usepackage{changepage}
\usepackage{tikz}

\usepackage{amsfonts,amsmath,amssymb,amscd}
\usepackage{mathrsfs}

\usepackage[numbers]{natbib}
\usepackage[nameinlink]{cleveref}
\usepackage{thmtools}

\numberwithin{equation}{section}

\declaretheoremstyle[
  headfont=\bfseries,
  bodyfont=\itshape,
  spaceabove=9pt,
  spacebelow=9pt,
]{thmstyle}

\declaretheoremstyle[
  headfont=\bfseries,
  bodyfont=\normalfont,
  spaceabove=9pt,
  spacebelow=9pt,
]{defstyle}

\declaretheorem[
  style=thmstyle,
  numberwithin=section,
  name=Theorem
]{thm}

\declaretheorem[style=thmstyle, sibling=thm, name=Lemma]{lemma}
\declaretheorem[style=thmstyle, sibling=thm, name=Proposition]{prop}
\declaretheorem[style=thmstyle, sibling=thm, name=Corollary]{corollary}
\declaretheorem[style=thmstyle, sibling=thm, name=Observation]{observation}
\declaretheorem[style=thmstyle, sibling=thm, name=Notation]{notation}

\declaretheorem[style=thmstyle, numbered=no, name=Theorem]{theorem*}
\declaretheorem[style=thmstyle, numbered=no, name=Lemma]{lemma*}

\declaretheorem[style=defstyle, sibling=thm, name=Definition]{dfn}

\declaretheorem[style=defstyle, numbered=no, name=Definition]{dfn*}
\declaretheorem[style=defstyle, numbered=no, name=Remark]{rem*}
\declaretheorem[style=defstyle, numbered=no, name=Note]{note*}


\newenvironment{customthm}[1]
  {\innercustomthm}
  {\endinnercustomthm}

\newcounter{Cequ}
\newenvironment{CEquation}
  {\stepcounter{Cequ}%
   \addtocounter{equation}{-1}%
   \equation}
  {\endequation}


\newcommand{\N}{\mathbb{N}}

\title{On the conditions for the continuity of the Hausdorff measure}
\author{Rafał Tryniecki}
\address{Rafał Tryniecki:University of Warsaw,Institute of Mathematics, ul. Banacha 2, 02-097 Warszawa, Poland}
\email{rafal.tryniecki@mimuw.edu.pl}
\thanks {This research was funded in whole or in part by the National Science Centre, Poland, grant no. 2023/49/B/ST1/03015.} 

\begin{document}

\begin{abstract}
Let $(b_k)_{k = 0}^\infty$ be strictly decreasing sequence of real numbers such that $b_0 = 1$ and $\{f_k:[b_k,b_{k-1}]\to [0,1]\}_{k\in\N}$ be decreasing functions such that $f_k(b_k) = 1$ and $f_k(b_{k-1}) = 0$, $k = 1, 2, \dots$. By $g_k: [0,1] \to [b_k, b_{k-1}]$ we denote the inverse of $f_k$ for  $k = 1,2 \dots$.
First, we define iterated function system (IFS) $S_n$ by limiting the collection of functions $g_k$ to first n, meaning $S_n = \{g_k \}_{\{k=1, \dots n\}}$. Let $J_n$ denote the limit set of $S_n$.
In the first part, we show that if $S_n$ fulfills the following two conditions:  (1)~$\lim\limits_{n \to \infty} \left(1-h_n\right) \ln{n} = 0 $ where $h_n$ is the Hausdorff dimension of $J_n$, and (2)~$\sup \limits_{k\in \mathbb{N}} \left \{\frac{b_k-b_{k+1}}{b_{k+1}} \right \} < \infty $, then $\lim\limits_{n\to \infty} H_{h_n}(J_n) = 1 = H_1(J)$, where $h_n$ is the Hausdorff dimension of $J_n$ and $H_{h_n}$ is the corresponding Hausdorff measure. 
In the second part, we provide four conditions for IFS consisting of nonlinear functions $f_k$ which guarantee that  $\lim\limits_{n\to \infty} H_{h_n}(J_n) = 1 = H_1(J)$, where $h_n$ is the Hausdorff dimension of $J_n$ and $H_{h_n}$ is the corresponding Hausdorff measure. We also provide a wide collection of examples of families of IFSes fulfilling those assumptions.
\end{abstract}
\maketitle
\section{Introduction}
\par 
Let $g_k: [0,1] \to [\frac{1}{k+1}, \frac{1}{k}]$ be a collection of maps given by $g_k(x) = \frac{1}{k+x}$ \mbox{($k = 1, 2, \dots$)}. The collection of inverse maps $f_k:[\frac{1}{k+1},\frac{1}{k}] \to [0,1]$ is given by $f_k(x) = \{\frac{1}{x}\}$. Restricting each map $f_k$ to the half-open interval $(\frac{1}{k+1}, \frac{1}{k}]$ and putting $f = f_k(x)$ for $x \in (\frac{1}{k+1}, \frac{1}{k}] $ we obtain the well-known Gauss map. For each $n$, we define an iterated function system $S_n$ (IFS) consisting of the maps $g_k$, $k  = 1, \dots , n$. Let $J_n$ be the Julia set (limit set) generated by $S_n$. In 1929 V. Jarnik \cite{jarnik1} estimated, using elementary methods, the rate of convergence of the Hausdorff dimension of the Julia set $J_n$ defined as the limit set of the IFS $S_n$. $J_n$ is a set of those irrational numbers in the set $[0,1]$, whose continued fraction expansion has entries bounded by $n$. In 1992 Doug Hensley \cite{Hensley} proved that $h_n$ has the following asymptotics:
\[
\lim \limits_{n \to \infty} n(1-h_n) = \frac{6}{\pi^2}.
\]
In 2016 Mariusz Urbański and Anna Zdunik in \cite{UZ} proved, using Hensley's result that for previously mentioned sets, we have continuity of the Hausdorff measure in Hausdorff dimension, meaning
\[
\lim \limits_{n \to \infty}H_{h_n}(J_n)  =  1,
\]
where $H_h$ - denotes the numerical value of Hausdorff measure in dimension $h$. This property is by no means obvious, as they also provided an example of Iterated Function System where this continuity does not hold.
\\
Given constants $a_n \geq 2$ for every $n \in \N$, each real number can be written in the form
\[
x = \frac{1}{a_1} + \frac{1}{a_1(a_1-1)a_2} + \dots \frac{1}{a_1(a_1-1)a_2 \dots a_{n-1}(a_{n-1}-1)a_n} + \dots
\]
This series expansion is called L\"uroth expansion, and was introduced in 1883 by L\"uroth \cite{luroth}. Each irrational number has a unique infinite expansion of this form and each rational number has either a finite expansion or a periodic one. The linear analogue of Gauss map is referred to as Generalized L\"uroth map. In recent years, the L\"uroth expansion has been of interest in many research papers. For example Barreira and Iommi in \cite{barreira-iommi} study Hausdorff dimension of a class of sets defined in terms of the frequencies of digits in the expansion. In 2013 Mance and Tseng show in \cite{mance-tseng} that the set of numbers with bounded Lüroth expansions (or bounded Lüroth series) is winning and strong winning. 
\par 
In 2021, Tan and Zhou in \cite{tan-zhou} studied the Jarník-like set of real numbers which can be well approximated by infinitely many of their convergents in the L\"uroth expansion:
\[
W(\psi) = \{x \in [0,1): |xq_n(x) - p_n(x)| < \psi(x) \text{ for infinitely many } n\in \N \}
\]
where $\psi: \mathbb{R} \to (0,\frac{1}{2}]$ is a positive function. They completely determine the Hausdorff dimension of $W(\psi)$. 
\\
\\
The main objective of this paper is finding conditions on the iterated function system that guarantee continuity of the Hausdorff measure. We manage to prove continuity of the Hausdorff measure in adequate dimension for sets generated by IFS consisting of the following linear decreasing functions
\[
f_k : \left[b_k, b_{k-1} \right] \to \left[0,1\right]
\]
such that
\[
f_k(b_{k-1}) = 0 \ {\rm and} \ f_k(b_{k}) = 1,
\]
where the sequence $b_k$, $k \in \{0,1,2,\dots\}$ monotonically decreases to $0$ as $k \to \infty$ and $b_0 = 1$. This is a generalization of the L\"uroth expansion.
By $g_k:[0,1]\to[b_k,b_{k-1}]$ we will denote the inverse map $f_k^{-1}$ for $k = 1, 2, \dots$
\par In addition, we assume that IFSes fulfill the following two conditions
\begin{CEquation}\label{warunek2}
\lim\limits_{n \to \infty} \left(1-h_n\right) \ln{n} = 0 ,
\end{CEquation}
where $h_n$ is the Hausdorff dimension of the limit set of the IFS  generated by $n$ initial maps $f_k$, $k = 1, \dots, n$ and 
\begin{CEquation}\label{warunek3}
\sup \limits_{k\in \mathbb{N}} \left \{\frac{b_k-b_{k+1}}{b_{k+1}} \right \} < \infty.
\end{CEquation}
By defining our sets in this way, we can examine the asymptotic of the Hausdorff dimension of Julia set $J_n$ generated by the first $n$ functions $g_k$ and based on this we can deduce the continuity of the Hausdorff measure. 
\\
Our main result of the first part is the following \Cref{razem}, stating
\begin{customthm}{\ref{razem}}
Let $S_n$ be iterated function system (IFS) defined in Definition \ref{IFS Sn}, which fulfills conditions \eqref{warunek2} and \eqref{warunek3}. Then
\[
\lim\limits_{n\to \infty} H_{h_n}(J_n) = 1,
\]
where $J_n$ is the limit set of the IFS $S_n$, $H_h$ denotes Hausdorff measure in Hausdorff dimension $h$, and $h_n$ is the Hausdorff dimension of the limit set $J_n$.
\end{customthm}
This result is a significant generalization of the result by M. Urbański and A. Zdunik in \cite{UZ}. Their counterexample to this property consists of linear transformations of the interval, however it does not fulfill Condition \eqref{warunek2}. In Section \ref{sec: examples} we discuss the assumption \eqref{warunek2} and provide a large collection of systems which satisfy it.
\par
In the second part of the paper, we analyze iterated function systems consisting of nonlinear functions. We assume, that $f_k(b_k) = 1$ and $f_k(b_{k-1}) = 0$, but we skip the linearity condition. $\kappa(g)$ denotes the distortion of $g$, defined in \Cref{def: distortion}. We assume that each function $g_k$, $k = 1,2, \dots $ is a $C^2$ function in some neighborhood of the interval $[0,1]$.  Now instead of previous conditions, we assume the following conditions:
\begin{CEquation}\label{warunek dodatkowy 3}
\lim\limits_{n \to \infty} \left(1-h_n\right) \ln{n} = 0,
\end{CEquation}
\begin{CEquation}\label{warunek dodatkowy 2}
\lim\limits_{n \to \infty}   \frac{b_k}{b_{k+1}} = 1 \text{ which is stronger than Condition \eqref{warunek3},}
\end{CEquation}
\begin{CEquation}\label{eq: warunek 4}
\bigg | \frac{g''_n(x)}{g_n'(x)} \bigg | < c \text{ for some } c \in (0,\infty),
\end{CEquation}
\begin{CEquation}\label{eq: warunek 5}
\lim \limits_{n \to \infty} \kappa(g_n) = 1
\end{CEquation}
and
\begin{CEquation}\label{eq: warunek 6}
|g'_n| < \alpha < 1 .
\end{CEquation}
for all $n \in \{1,2,\dots\}$.
Note that the last three conditions provide some bound on non-linearity of the system. Assuming those additional conditions, we prove in \Cref{thm: limit of the measure} the following result.
\begin{customthm}{\ref{thm: limit of the measure}}
Let $J_n$ be the limit set of the IFS $S_n$ fulfilling the conditions \eqref{warunek dodatkowy 3} -  \eqref{eq: warunek 6}. Then
\[
\lim \limits_{n \to \infty} H_{h_n}(J_n) = 1,
\]
where $h_n$ is the Hausdorff dimension of $J_n$ and $H_h$ is the Hausdorff measure in dimension $h$.
\end{customthm}
As one can see, usefulness of this theorem is mostly dependent on checking the Condition \eqref{warunek dodatkowy 3}. We show how to check this condition in non-linear case in Section \ref{section: nonlinear examples}.
\section{Notation}
We will start by introducing key definitions, theorems and notation.
\\
Let $f_k(x):[b_k,b_{k-1}]\to [0,1]$ for $k \in \mathbb{N}$ be a decreasing function, such that 
\[
f_k(b_{k}) = 1  \text{ and } f_k(b_{k-1}) = 0.
\]
By $g_k$ we will denote the inverse map $f_k^{-1}$, $k \in \N$. So $g_k$ maps the interval $[0,1]$ onto $[b_k, b_{k-1}]$. We also define the function $f :(0,1] \to [0,1]$ in the following way $f(x) = f_k(x)$ for $x \in (b_k,b_{k-1}]$,  $k = 1, 2, \dots$
\begin{dfn}\label{IFS Sn}
Iterated function system (IFS) $S_n$ is defined by limiting the collection of functions $g_k$ to first $n$, meaning $S_n = \{g_k\}_{k=1}^{n}$. 
\end{dfn}
Since the sequence $b_k$ defines functions $f_k$ and thus IFS $S_n$, we will say that $S_n$ is generated by $b_k$. 

\begin{notation}\label{not: limit set}
By $J_n$ we will denote the limit set created by the IFS $S_n$ 
\[
J_n = \bigcap\limits_{l=1}^\infty \bigcup\limits_{q_1, q_2 \dots q_l \in \{1, 2, \dots  n \}^{l} } g_{q_1}\circ g_{q_2} \circ \dots \circ g_{q_l}([0,1]).
\]
\end{notation}
\begin{notation}\label{hausdorff dimension}
We will denote Hausdorff dimension of the set $J_n$ by $h_n$ and Hausdorff measure of the set A in dimension $h$ by $H_{h}(A)$. 
\\We denote by ${\rm diam}(F)$ the diameter of the set F. We will also use the notation $|F|$ to denote the diameter of the set $F$.
\end{notation}
Next definition is widely-known condition known as Open Set Condition. One can find it in, for example, \cite{Falconer}.
\begin{dfn}\label{defi: open set condition}
We say that IFS composed of contractions $\{\phi_i\}_{i = 1}^n$ on $\mathbb{R}^n$ fulfills Open Set Condition (OSC), if there exists open set V such that following two conditions hold
\begin{equation}
    \bigcup\limits_{i=1}^n \phi_i(V) \subseteq V
\end{equation}
and the sets $\phi_i(V)$ are pairwise disjoint.
\end{dfn}
Now, for IFS which consists of linear functions, we have the following observation. Based on the fact that each IFS $S_n$ satisfies the Open Set Condition, we know that $h_n$ is a unique solution to the following equation 
\[
\sum\limits_{k=1}^{n} \left| b_{k} - b_{k-1}\right|^{h_n} = 1
\]
provided the functions generating the IFS are linear.
Proof of this classical fact can be found for example in \cite{Falconer}.
It follows from the above equation equation, that $\lim\limits_{n \to \infty} h_n = 1$ and $0<h_n<h_{n+1}<1$.
\begin{notation}
Let $\omega \in \mathbb{N}^k$. We will use the notation $\omega = [\omega_1, \omega_2, \omega_3, \dots, \omega_k]$. Then, we define $g_\omega$ as follows
\[
g_\omega = g_{\omega_1}\circ g_{\omega_2} \circ \dots \circ g_{\omega_k}.
\]
\end{notation}
\begin{dfn}\label{generacje fnl}
Let $S_n$ be the IFS generated by $b_k$, $k = 0,1, \dots, n-1$. We denote by $\mathcal{F}^{n}_l$ the $l$-th generation of intervals generated by $S_n$:
\[
  \mathcal{F}^n_l = \left \{ g_{i_1}\circ g_{i_2}\circ \dots \circ g_{i_l}([0,1]): i_1, i_2, \dots,i_l \in \{1,2,\dots, n\}  \right \}.
\]
\end{dfn}
Theorem 9.3 in \cite{Falconer} tells that the limit set of iterated function systems consisting of contracting similarities and fulfilling the Open Set Condition has positive and finite Hausdorff measure. Thus, we can define a normalized Hausdorff measure.
\begin{dfn}\label{defi: miara unormowana}
Let $X\subset \mathbb{R}$. If $0<H_{h_n}(X)<\infty$ then by $m_n$ we will denote the normalized $H_{h_n}$ Hausdorff measure on $X$
\[
m_n(A) := \frac{H_{h_n}(A \cap X)}{H_{h_n}(X)}
\]
\end{dfn}
\begin{prop}
Assume $S_n$ consists of linear functions. Let $m_n$ be the normalized Hausdorff measure on the set $J_n$. Then, for every Borel set $A \subset [0,1]$
\[
m_n(g_k(A)) = |g_k'|^{h_n} \cdot m_n(A)
\]
\end{prop}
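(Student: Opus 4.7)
The plan is to reduce the claim to two standard facts: (i) each $g_k$ is an affine similarity on $\mathbb{R}$ with scaling ratio $|g_k'|$, so that $H_{h_n}(g_k(B)) = |g_k'|^{h_n}\,H_{h_n}(B)$ for every Borel set $B$; and (ii) $J_n$ is self-similar and the OSC holds (see the remarks after \Cref{defi: open set condition}), so the overlaps between the cylinders $g_k([0,1])=[b_k,b_{k-1}]$ are at worst the countable set of endpoints $\{b_k\}$. Fact~(i) is forced by the linearity assumption: since $f_k:[b_k,b_{k-1}]\to[0,1]$ is affine with $f_k(b_k)=1$, $f_k(b_{k-1})=0$, the inverse $g_k$ is affine with $|g_k'|=b_{k-1}-b_k$, and the Hausdorff scaling law then follows from the definition of $H_{h_n}$ via covers.

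With these in hand, the main step is to verify the identity
\[
H_{h_n}\bigl(g_k(A)\cap J_n\bigr) \;=\; H_{h_n}\bigl(g_k(A\cap J_n)\bigr)
\]
for every Borel $A\subset[0,1]$. The inclusion $g_k(A\cap J_n)\subset g_k(A)\cap J_n$ follows from $g_k(J_n)\subset J_n$. For the reverse, I would show that $g_k([0,1])\cap J_n = g_k(J_n)$ up to a countable (hence $H_{h_n}$-null, since $h_n>0$) set of endpoints: if $x\in J_n\cap[b_k,b_{k-1}]$ is not one of the shared endpoints, then unwinding the definition of $J_n$ in \Cref{not: limit set} gives a coding $(i_1,i_2,\dots)\in\{1,\dots,n\}^{\mathbb{N}}$ whose first symbol must be $i_1=k$, so $g_k^{-1}(x)\in J_n$. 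Since $g_k$ is a bijection onto its image, this gives $g_k(A)\cap J_n = g_k(A\cap J_n)$ modulo a null set.

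Combining this with fact~(i) applied to $B=A\cap J_n$ yields
\[
H_{h_n}\bigl(g_k(A)\cap J_n\bigr) \;=\; |g_k'|^{h_n}\,H_{h_n}(A\cap J_n).
\]
Dividing by $H_{h_n}(J_n)$, which is positive and finite by the paragraph preceding \Cref{defi: miara unormowana}, produces $m_n(g_k(A))=|g_k'|^{h_n}\,m_n(A)$.

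The only delicate point is ruling out that $g_k$ could carry points of $A\setminus J_n$ into $J_n$; this is precisely what the OSC, combined with the fact that the cylinders $[b_k,b_{k-1}]$ are almost disjoint, prevents. Without OSC one could have a positive contribution from $g_k(A\setminus J_n)\cap J_n$ and the scaling law would fail; with OSC the offending intersection is countable and hence negligible for $H_{h_n}$.
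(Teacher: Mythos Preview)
The paper states this proposition without proof, treating it as a standard fact about self-similar sets satisfying the Open Set Condition; there is therefore no ``paper's own proof'' to compare against. Your argument is correct and supplies exactly the details one would expect: the scaling law $H_{h_n}(g_k(B))=|g_k'|^{h_n}H_{h_n}(B)$ for the affine similarity $g_k$, together with the observation that $g_k(A)\cap J_n$ and $g_k(A\cap J_n)$ differ by at most the two endpoints $\{b_k,b_{k-1}\}$, which is $H_{h_n}$-null since $h_n>0$. One small streamlining: instead of unwinding the intersection in \Cref{not: limit set}, you can use the self-similarity $J_n=\bigcup_{j=1}^n g_j(J_n)$ directly---since the open first-level cylinders $(b_j,b_{j-1})$ are pairwise disjoint, any $x\in J_n\cap(b_k,b_{k-1})$ must lie in $g_k(J_n)$, which is precisely the inclusion you need.
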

As an immediate consequence of this fact, we get the following Corollary.
\begin{corollary}\label{cor: wlasnosc niezmienniczosci mn}
Assume $S_n$ consists of linear functions. Let A be a Borel set such that $A \subseteq [0,1]$. Then
\[
\frac{m_n(A)}{({\rm diam}A)^{h_n}} = \frac{m_n(g_k(A))}{({\rm diam}(g_k(A)))^{h_n}}
\]
for every $k \in \{1, 2, \dots, n \}$.
\end{corollary}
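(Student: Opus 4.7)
The plan is simple: combine the preceding Proposition with the elementary fact that a linear map scales diameters by the absolute value of its derivative. Since we are in the linear regime of this section, each $g_k$ in the IFS $S_n$ is an affine map from $[0,1]$ onto $[b_k, b_{k-1}]$, so $g_k'$ is a (nonzero) constant on $[0,1]$, and for every subset $A$ of $[0,1]$ one has
\[
\diam(g_k(A)) = |g_k'| \cdot \diam(A).
\]
This is the geometric input, and it handles the denominator of the target identity.

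For the numerator, the preceding Proposition supplies directly
\[
m_n(g_k(A)) = |g_k'|^{h_n} \cdot m_n(A).
\]
Dividing the second identity by the $h_n$-th power of the first produces a cancellation of the factor $|g_k'|^{h_n}$:
\[
\frac{m_n(g_k(A))}{(\diam(g_k(A)))^{h_n}} = \frac{|g_k'|^{h_n} \cdot m_n(A)}{|g_k'|^{h_n} \cdot (\diam A)^{h_n}} = \frac{m_n(A)}{(\diam A)^{h_n}},
\]
which is precisely the claimed equality.

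There is no real obstacle: once the Proposition is granted, the corollary is a two-line computation, and the only point worth remarking is the degenerate case $\diam(A) = 0$ (that is, $A$ a singleton or empty), where both sides vanish and the identity holds trivially. Hence the corollary follows.
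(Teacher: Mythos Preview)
Your argument is correct and is exactly the two-line computation the paper has in mind when it calls the corollary an ``immediate consequence'' of the preceding Proposition: combine $m_n(g_k(A)) = |g_k'|^{h_n} m_n(A)$ with $\diam(g_k(A)) = |g_k'|\,\diam(A)$ and cancel. The only quibble is your remark on the degenerate case $\diam(A)=0$: there both quotients are of the form $0/0$ and hence undefined rather than ``vanishing,'' but this edge case is implicitly excluded in the paper (the corollary is only ever applied to nondegenerate intervals), so it does not affect the validity of your proof.
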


\begin{dfn}\label{density}
Let $J$ be a subset of the interval $[0,1]$. The $n-th$ density of the interval $J$ with respect to the measure $m_n$ denoted by $d_n(J)$ is given by the quotient $ d_n(J) := \frac{m_n(J)}{{\rm diam }(J)^{h_n}}$ where $h_n$ is Hausdorff dimension of the set $J_n$, and $m_n$ is normalized Hausdorff measure.
\end{dfn}
The main theorems used to prove our results are density theorems for the Hausdorff measure, see  \cite{Matilla} Theorem 6.2.
\begin{thm}
Let $X$ be a metric space, with Hausdorff dimension equal to $h$, such that the Hausdorff measure of $X$ in Hausdorff dimension $h$ is finite. Then
\[
\lim\limits_{r \to 0} \left( \sup \left\{  \frac{H_h(F)}{{\rm diam }^{h}(F)}:x \in F, \overline{F} = F, {\rm diam }(F) \leq r \right\} \right )= 1
\]
for $H_h$ - almost all  $x \in X$.
\end{thm}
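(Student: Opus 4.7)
The strategy is to show separately that the limit (which exists because the inner $\sup$ is monotone in $r$) is at most $1$ almost everywhere and at least $1$ almost everywhere. Both halves are instances of the same scheme: on a hypothetical set of positive $H_h$-measure where the density exceeds (respectively, falls below) the target value, an efficient cover yields a self-contradictory inequality for $H_h$.

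For the inequality ``$\leq 1$'', fix $\epsilon > 0$ and let
\[
E_\epsilon^+ = \left\{ x \in X : \lim_{r \to 0} \sup_{x \in F,\, \overline{F} = F,\, \diam F \leq r} \frac{H_h(F)}{\diam(F)^h} > 1 + \epsilon \right\}.
\]
At each $x \in E_\epsilon^+$ there is a closed set $F \ni x$ of arbitrarily small diameter with $H_h(F) > (1+\epsilon)\diam(F)^h$, so these $F$'s form a Vitali-type fine cover of $E_\epsilon^+$. I would extract a countable disjoint subfamily $\{F_i\}$ covering $E_\epsilon^+$ up to a null set, all of diameter at most $\delta$ and contained in a chosen open neighborhood $U$ of $E_\epsilon^+$, and then write
\[
H_h(E_\epsilon^+) \leq \sum_i \diam(F_i)^h \leq \frac{1}{1+\epsilon}\sum_i H_h(F_i) \leq \frac{H_h(U)}{1+\epsilon},
\]
where the first step is the definition of $H_h$ via $\delta$-covers (sending $\delta \to 0$), the second uses the density condition on each $F_i$, and the third uses disjointness. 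Shrinking $U$ down to $E_\epsilon^+$ via Borel regularity of $H_h$ gives $(1+\epsilon) H_h(E_\epsilon^+) \leq H_h(E_\epsilon^+)$, hence $H_h(E_\epsilon^+) = 0$; taking a union over $\epsilon = 1/k$ completes this direction.

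For the inequality ``$\geq 1$'', define $E_\epsilon^-$ by replacing ``$> 1 + \epsilon$'' in the display above by ``$< 1 - \epsilon$'', and split $E_\epsilon^- = \bigcup_k E_\epsilon^-(k)$, where $x \in E_\epsilon^-(k)$ means that every closed $F \ni x$ with $\diam F \leq 1/k$ satisfies $H_h(F) \leq (1-\epsilon)\diam(F)^h$. For any $\eta > 0$ I would pick a cover of $E_\epsilon^-(k)$ by closed sets $\{U_i\}$ with each $\diam U_i \leq 1/k$ and $\sum_i \diam(U_i)^h \leq H_h(E_\epsilon^-(k)) + \eta$. Every $U_i$ meeting $E_\epsilon^-(k)$ contains a point $x$ at which the density bound applies to $U_i$ itself, so
\[
H_h(E_\epsilon^-(k)) \leq \sum_i H_h(U_i) \leq (1-\epsilon)\sum_i \diam(U_i)^h \leq (1-\epsilon)\bigl(H_h(E_\epsilon^-(k)) + \eta\bigr),
\]
and letting $\eta \to 0$ forces $H_h(E_\epsilon^-(k)) = 0$ for every $k$, whence $H_h(E_\epsilon^-) = 0$.

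The main obstacle is the Vitali extraction step in the upper-bound argument: the fine cover consists of arbitrary closed sets of uncontrolled shape rather than balls, so one must invoke a Vitali-type covering theorem for Hausdorff measures (as in Mattila's book) to guarantee both the disjointness of the chosen subfamily and that it exhausts $E_\epsilon^+$ up to a null set. Measurability of the density function, needed to apply Borel regularity of $H_h$ on $E_\epsilon^+$, is a secondary technical point that is handled by monotonicity of the inner $\sup$ in $r$.
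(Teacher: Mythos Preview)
The paper does not provide its own proof of this statement; it is cited as a known density theorem from Mattila's book (Theorem 6.2 in \cite{Matilla}) and used as a black box. Your outline is essentially the standard textbook argument found there and in Federer and Rogers: the lower bound via an efficient $\delta$-cover of the low-density set, and the upper bound via a disjoint Vitali-type subfamily extracted from the fine cover by high-density closed sets. You have correctly flagged the one non-routine ingredient, namely a Vitali covering theorem for $H_h$ with arbitrary closed sets rather than balls; such a theorem does hold in general metric spaces under the hypothesis $H_h(X)<\infty$ (see, e.g., Federer 2.10.17--2.10.18), so the plan is sound, but in a fully written proof you should cite that version explicitly rather than the ball version.
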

From this theorem we get
\begin{thm}
Let $X$ be a metric space and by $H_h$ denote the Hausdorff measure in dimension $h$. Assume that $0 < H_h(X) < +\infty$ and let $H_h^1$ denote the normalized Hausdorff measure on $X$ in dimension $h$. Then
\[
H_h(X) = \lim\limits_{r\to 0} \left (\inf \left\{  \frac{{\rm diam }^{h}(F)}{H^1_h(F)}:x \in F, F \subset X, \overline{F} = F, {\rm diam }(F) \leq r \right\} \right)
\]
for $H_h^1$-almost all  $x\in X$.
\end{thm}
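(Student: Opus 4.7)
The plan is to observe that the statement to be proved is essentially the preceding density theorem in disguise, rewritten via the reciprocal. Since the normalized measure satisfies $H_h^1(F) = H_h(F)/H_h(X)$ for any closed $F\subset X$, one has
\[
\frac{({\rm diam}\, F)^h}{H_h^1(F)} = H_h(X)\cdot \frac{({\rm diam}\, F)^h}{H_h(F)},
\]
and the elementary identity $\inf_i a_i^{-1} = (\sup_i a_i)^{-1}$ (for positive reals) converts the infimum in the second statement into $H_h(X)$ divided by the supremum in the first, both taken over the same family of admissible $F$. Sets $F$ with $H_h(F) = 0$ contribute $+\infty$ to the inf and $0$ to the sup, so they play no role.

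Next I would let $r \to 0$. The preceding theorem asserts that for $H_h$-almost every $x$ the supremum tends to $1$; since $H_h^1$ is a positive scalar multiple of $H_h$ on $X$, the null sets coincide and the same convergence holds $H_h^1$-almost everywhere. Because the supremum is eventually bounded away from $0$, taking reciprocals inside the limit is legitimate and yields
\[
\lim_{r\to 0} \inf\left\{ \frac{({\rm diam}\, F)^h}{H_h^1(F)} : x\in F,\ F\subset X,\ \overline F = F,\ {\rm diam}(F)\le r\right\} = \frac{H_h(X)}{1} = H_h(X),
\]
which is the claimed identity.

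The main potential obstacle is essentially illusory: the whole argument is an algebraic reformulation of the previous theorem, and the only point requiring any care is ensuring that the sup in the first theorem still converges to $1$ when one restricts $F$ to closed subsets of $X$. This restriction is already built into the density formulation of Mattila's Theorem~6.2 that is cited, so no additional work is needed.
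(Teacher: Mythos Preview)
Your proposal is correct and matches the paper's approach exactly: the paper does not give a separate proof but simply writes ``From this theorem we get'' after stating the preceding density theorem, and your argument is precisely the algebraic unwinding (reciprocal plus the identity $\inf a_i^{-1}=(\sup a_i)^{-1}$ and the observation that $H_h$- and $H_h^1$-null sets coincide) that this phrase implicitly invokes.
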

As a consequence of this Theorem and the fact, that in all Euclidean metric spaces the diameter of the closed convex hull of every set A is the same as the diameter of A, we get the following theorem for the subset of real line.
\begin{thm}\label{gestosc}
Let $X$ be a subset of an interval $\Delta \subset \mathbb{R}$ with finite and positive Hausdorff measure $H_h(X)$. Let $H_h^1$ be normalized Hausdorff measure on $X$. Then for $H_h^1$-almost all $x\in X$ we get
\[
H_h(X) = \lim\limits_{r\to 0} \inf \left\{  \frac{{\rm diam }^{h}(F)}{H^1_h(F\cap X)}:x \in F, F\subset \Delta \ is \ a \ closed \ interval, \ and \ {\rm diam }(F) \leq r \right\}.
\]
\end{thm}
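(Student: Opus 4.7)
The plan is to deduce Theorem~\ref{gestosc} from the preceding theorem by showing that, for every fixed $r>0$, the two infima involved are in fact equal. Once this is done, the identity of their limits as $r\to 0$ transfers the conclusion of the preceding theorem to the statement here. The pivotal ingredient, highlighted in the paragraph just before the theorem, is that in $\R$ every bounded set $F$ satisfies $\diam(\mathrm{conv}(F))=\diam(F)$, where $\mathrm{conv}(F)=[\inf F,\sup F]$ is a closed interval.

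Abbreviating the two infima as
\[
A(r):=\inf\left\{\frac{\diam^h(F)}{H^1_h(F)}:x\in F\subset X,\ \overline{F}=F,\ \diam(F)\leq r\right\},
\]
\[
B(r):=\inf\left\{\frac{\diam^h(F)}{H^1_h(F\cap X)}:x\in F\subset\Delta,\ F\text{ a closed interval},\ \diam(F)\leq r\right\},
\]
I would establish $A(r)=B(r)$ by two short arguments. For $B(r)\leq A(r)$, given a closed $F_1\subset X$ competing in $A(r)$, I would replace it by $F:=\mathrm{conv}(F_1)$, a closed interval in $\Delta$ of the same diameter (using that $\Delta$ is itself an interval and $F_1\subset X\subset\Delta$); since $F_1\subset F\cap X$, the numerator is unchanged while the denominator can only grow, so the ratio does not increase. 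For $A(r)\leq B(r)$, given a closed interval $F\subset\Delta$ competing in $B(r)$, I would restrict to $F_1:=F\cap X$, which is a closed subset of $X$ of no larger diameter; the denominator stays the same while the numerator can only shrink, so again the ratio does not increase.

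I do not expect any substantive obstacle in carrying this out. The only minor point to verify is what ``$\overline{F}=F$'' in the preceding theorem means---closure in $\R$ or in $X$---but both readings accommodate the argument above; in the paper's intended application $X=J_n$ is compact, so the distinction is immaterial. Once $A(r)=B(r)$ is in hand for every $r>0$, the theorem follows immediately from the preceding one by passing to the limit $r\to 0$.
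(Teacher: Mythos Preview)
Your proposal is correct and is exactly what the paper intends: the theorem is stated there as an immediate consequence of the preceding one together with the observation that $\diam(\mathrm{conv}(F))=\diam(F)$ in $\R$, with no further details given. You have simply written out the two-sided comparison $A(r)=B(r)$ that this one-line remark encodes; the only caveat is that your parenthetical ``both readings accommodate the argument'' is not quite right for the $A(r)\le B(r)$ direction when $X$ is not closed in $\R$, but the natural reading (closure in the metric space $X$, as in the preceding theorem) makes $F\cap X$ relatively closed and the argument goes through.
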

We can take this theorem one step further while talking about iterated function systems consisting of linear functions. The following theorem holds true.
\begin{thm}\label{gestosc lepsza}
Let $\Delta \subset \mathbb{R}$ be an interval. Let $X \subset \Delta$ be the limit set of the iterated function system consisting of contracting similarities $g_j$ such that $g_j(\Delta) \subset \Delta$ and $0 < H_h(X) < +\infty$, then
\[
H_h(X) = \inf \left\{\frac{{\rm diam }^{h}(F)}{H^1_h(F\cap X)}: F \ is \ a\ closed \ interval \right\}.
\]
\end{thm}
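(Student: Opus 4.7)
My plan is to prove the two inequalities separately. The direction $H_h(X) \ge \inf_F \frac{(\diam F)^h}{H^1_h(F\cap X)}$ follows essentially directly from \Cref{gestosc}. Fix any density point $x^\ast\in X$. The quantity
\[
I(r) := \inf\left\{\frac{(\diam F)^h}{H^1_h(F\cap X)} : x^\ast \in F \subset \Delta,\ F\text{ a closed interval},\ \diam F \le r\right\}
\]
is non-decreasing as $r\to 0$ with limit $H_h(X)$, so $I(r)\le H_h(X)$ for every $r>0$. Since the infimum in the theorem is taken over a strictly larger family, it is bounded above by $I(r)\le H_h(X)$.

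For the opposite (and nontrivial) inequality I have to show $\frac{(\diam F)^h}{H^1_h(F\cap X)} \ge H_h(X)$ for every closed interval $F$ with $H^1_h(F\cap X)>0$. The central observation is that this quotient is \emph{invariant} under the IFS. Since $g_\omega$ is a similarity of ratio $|g'_\omega|$, we have $\diam g_\omega(F) = |g'_\omega|\diam F$. The open set condition (taking the open set to be the interior of $\Delta$) implies that for distinct words $\omega\ne\omega'$ of equal length the intervals $g_\omega(\Delta)$ and $g_{\omega'}(\Delta)$ meet only at finitely many boundary points, hence $g_\omega(F)\cap X = g_\omega(F\cap X)$ modulo a set of $H_h$-measure zero. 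Combined with the scaling $H^1_h(g_\omega(A)) = |g'_\omega|^h H^1_h(A)$ this gives
\[
\frac{(\diam g_\omega(F))^h}{H^1_h(g_\omega(F)\cap X)} \;=\; \frac{(\diam F)^h}{H^1_h(F\cap X)} \;=:\; q
\]
for every finite word $\omega$.

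The main obstacle is that these arbitrarily small images $g_\omega(F)$ need not contain any preselected density point: one has $x\in g_\omega(F)$ iff the ``shifted'' coding point $\pi(\sigma^{|\omega|}\omega)$ lies in $F$, which is not automatic. I would overcome this with an ergodic argument. The normalized Hausdorff measure $H^1_h|_X$ equals the push-forward under the coding map $\pi\colon \{1,\dots,n\}^{\N}\to X$ of the Bernoulli measure $\nu$ on $\{1,\dots,n\}^{\N}$ with weights $p_i = |g'_i|^h$ (these sum to $1$ by Moran's equation). Since $\nu$ is ergodic for the shift $\sigma$ and $\nu(\pi^{-1}(F)) = H^1_h(F\cap X)>0$, for $\nu$-almost every infinite word $(i_1,i_2,\dots)$ one has $\sigma^k(i_1,i_2,\dots)\in \pi^{-1}(F)$ for infinitely many $k$.

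To conclude, select $x^\ast\in X$ that is simultaneously a density point and whose coding $(i_1,i_2,\dots)$ is generic in the above sense; the set of such $x^\ast$ has full $H^1_h$-measure. For each of the infinitely many $k$ satisfying $\pi(\sigma^k(i_1,i_2,\dots))\in F$, the interval $F_k := g_{i_1\cdots i_k}(F)$ contains $x^\ast$, has $\diam F_k = |g'_{i_1\cdots i_k}|\diam F \to 0$ as $k\to\infty$, and still satisfies $\frac{(\diam F_k)^h}{H^1_h(F_k\cap X)} = q$. Hence $I(r)\le q$ for every $r>0$, and letting $r\to 0$ in \Cref{gestosc} gives $H_h(X)\le q$, which is the required bound. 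The two delicate technical steps are the open-set-condition argument equating $g_\omega(F)\cap X$ with $g_\omega(F\cap X)$ modulo null sets, and the ergodic-recurrence argument that forces the shrinking images of $F$ to accumulate at $x^\ast$.
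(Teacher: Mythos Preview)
The paper does not supply its own proof of this theorem; it merely cites Theorem~5.1 in \cite{UZ}. Your argument is correct and is essentially the natural one: the ratio $\frac{(\diam F)^h}{H^1_h(F\cap X)}$ is invariant under the similarities, and ergodic recurrence of the Bernoulli shift forces arbitrarily small copies $g_{i_1\cdots i_k}(F)$ to contain a preselected density point, after which \Cref{gestosc} closes the argument.

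Two small points worth tightening. First, the theorem as stated does not explicitly assume the Open Set Condition; you invoke it to conclude $g_\omega(F)\cap X = g_\omega(F\cap X)$ modulo $H_h$-null sets. This is not a gap, since for similarity IFS the hypothesis $0<H_h(X)<\infty$ forces OSC (Schief's theorem), and in any case the essential disjointness $H_h(g_i(X)\cap g_j(X))=0$ follows directly from Moran's equation $\sum_i |g'_i|^h=1$ together with $H_h(g_i(X))=|g'_i|^h H_h(X)$, without reference to any particular open set. Second, the identification of $H^1_h$ with the push-forward of the Bernoulli$(|g'_1|^h,\dots,|g'_n|^h)$ measure uses that $h$ is the similarity dimension; this is again forced by $0<H_h(X)<\infty$, but it is worth saying so explicitly.
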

Proof of this theorem can be found in Theorem 5.1 in \cite{UZ}.
\par We shall use the following straightforward consequence of Jensen's inequality.
\begin{lemma}\label{lem: oszacowanie na sume wi}
Let $w_1, w_2, \dots, w_k$ be positive real numbers such that $   \sum\limits_{i = 1}^k w_i = 1.$ Then for all $0<a<1$ the following holds $\sum\limits_{i = 1}^k \left ( w_i \right)^{a} \leq k^{1-a}$.

\end{lemma}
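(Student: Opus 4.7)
The plan is to apply Jensen's inequality directly to the concave function $\varphi(x) = x^a$. Since $0 < a < 1$, the map $x \mapsto x^a$ has second derivative $a(a-1)x^{a-2} < 0$ on $(0,\infty)$, so it is strictly concave. Jensen's inequality for concave functions with the uniform probability weights $1/k$ applied to the positive values $w_1, \dots, w_k$ gives
\[
\frac{1}{k}\sum_{i=1}^k w_i^{a} \;\leq\; \left(\frac{1}{k}\sum_{i=1}^k w_i\right)^{\!a}.
\]

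The constraint $\sum_{i=1}^k w_i = 1$ then makes the right-hand side equal to $(1/k)^a = k^{-a}$. Multiplying both sides by $k$ yields the claimed bound $\sum_{i=1}^k w_i^a \leq k^{1-a}$.

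There is essentially no obstacle: the whole argument is one invocation of Jensen with uniform weights plus substitution of the normalization. One small thing to note is that the inequality is an equality precisely when all the $w_i$ coincide (i.e.\ $w_i = 1/k$), which is the extremal case one expects from the concavity of $x^a$. If a self-contained proof without citing Jensen is preferred, the same bound can alternatively be obtained from the power mean inequality between the arithmetic mean and the $a$-th power mean of $w_1,\ldots,w_k$, which is itself a rephrasing of the concavity of $x^a$.
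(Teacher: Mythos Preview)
Your proof is correct and matches the paper's approach exactly: the paper itself states this lemma as a ``straightforward consequence of Jensen's inequality'' without further detail, and your application of Jensen to the concave function $x\mapsto x^a$ with uniform weights is precisely the intended one-line argument.
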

\part{Linear systems}
In this part we are focusing on iterated function systems that consist of affine functions. To be precise, we assume that $f_k$ which form our IFS $S_n$ are affine.  
\section{Estimate from above}
In this section, we shall prove that the upper limit of the Hausdorff measure of the limit set generated by the first $n$ functions is less than or equal to 1. This is the easy part of the proof. 
\begin{thm}\label{gora}
Let $S_n$ be an IFS consiting of linear functions fulfilling conditions \eqref{warunek2} and \eqref{warunek3}. Then
\[
\limsup\limits_{n \to \infty} H_{h_n}(J_n) \leq 1.
\]
\end{thm}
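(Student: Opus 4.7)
The plan is to establish the stronger pointwise bound $H_{h_n}(J_n)\leq 1$ for every $n$, from which the $\limsup$ inequality is immediate. Two essentially equivalent routes are available; both exploit only the linearity of the maps and the dimension equation
\[
\sum_{k=1}^{n}(b_{k-1}-b_k)^{h_n}=1,
\]
and neither actually uses the hypotheses \eqref{warunek2} or \eqref{warunek3} (these are reserved for the matching lower bound).

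The first route is a direct covering argument using the generations $\mathcal{F}^n_l$ of Definition \ref{generacje fnl}. For $\omega=[\omega_1,\ldots,\omega_l]\in\{1,\ldots,n\}^l$ the set $g_\omega([0,1])$ is a closed interval with $\diam(g_\omega([0,1]))=\prod_{j=1}^l(b_{\omega_j-1}-b_{\omega_j})$, because each $g_k$ is affine with $|g_k'|=b_{k-1}-b_k$. These intervals cover $J_n$ and their mesh is at most $(\max_{1\le k\le n}(b_{k-1}-b_k))^l$, which tends to $0$ as $l\to\infty$ since every $|g_k'|<1$. The dimension equation then factors the sum multiplicatively:
\[
\sum_{\omega\in\{1,\ldots,n\}^l}\diam(g_\omega([0,1]))^{h_n}=\Big(\sum_{k=1}^{n}(b_{k-1}-b_k)^{h_n}\Big)^{l}=1,
\]
so $H^{\delta}_{h_n}(J_n)\leq 1$ for all $\delta>0$ at least as large as the mesh, and letting $l\to\infty$ yields $H_{h_n}(J_n)\leq 1$.

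The second, more conceptual route is to apply \Cref{gestosc lepsza} with the single competitor $F=[0,1]$. The maps $g_k$ are contracting similarities with $g_k([0,1])\subset[0,1]$, and the Open Set Condition combined with \Cref{gestosc lepsza}'s standing assumption provides $0<H_{h_n}(J_n)<\infty$, so the hypotheses are met. For $F=[0,1]$ one has $\diam(F)^{h_n}=1$ and $H^1_{h_n}(F\cap J_n)=H^1_{h_n}(J_n)=1$ by the normalization in Definition \ref{defi: miara unormowana}; the infimum in \Cref{gestosc lepsza} is therefore at most $1$, giving $H_{h_n}(J_n)\leq 1$.

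Taking $\limsup_{n\to\infty}$ of either inequality proves the theorem. I do not anticipate any genuine obstacle here: the upper bound is, as the author observes, the easy half, and the rate hypothesis \eqref{warunek2} together with the geometric control \eqref{warunek3} will enter only when establishing the complementary lower bound $\liminf_{n\to\infty}H_{h_n}(J_n)\geq 1$ in the sections that follow.
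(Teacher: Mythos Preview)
Your first route is precisely the paper's own argument: cover $J_n$ by the $l$-th generation cylinders $\mathcal{F}^n_l$, use the multiplicative factorization $\sum_{\omega}\prod_j a_{\omega_j}^{h_n}=(\sum_k a_k^{h_n})^l=1$, and let the mesh shrink as $l\to\infty$; the paper likewise remarks that neither \eqref{warunek2} nor \eqref{warunek3} is actually used here.

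Your second route via \Cref{gestosc lepsza} with the single test interval $F=[0,1]$ is not in the paper but is a valid and rather elegant shortcut: it trades the explicit covering computation for an appeal to the density characterization, so the bound $H_{h_n}(J_n)\le 1$ drops out in one line. The cost is that it relies on the positivity and finiteness of $H_{h_n}(J_n)$ (needed to invoke \Cref{gestosc lepsza}), which the paper's covering argument does not require; in practice this is already guaranteed by Falconer's theorem for self-similar sets with OSC, so nothing is lost.
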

\begin{proof}
Fix $n$. We will show that
\[
H_{h_n}(J_n) \leq 1
\]
for each $n\in \mathbb{N}$. We will show a sequence of covers of the set $J_n$ by sets from the collection $\mathcal{F}_{k}^n$ (defined in \ref{generacje fnl}) which are k-th generation cylinders obtained by iterating first $n$ functions defined in Definition \ref{generacje fnl} and we will justify that for every $k \in \mathbb{N}$ the following holds:
\[
\sum\limits_{F \in \mathcal{F}_{k}^n} \left| F \right|^{h_n} = 1.
\]
Note that for each $n$ we have that
\[
\max\limits_{F\in \mathcal{F}_k^n}\left( \left| F \right|\right) \xrightarrow{k\to \infty} 0.
\]
Thus, by the definition of the Hausdorff measure we shall conclude that $H_{h_n}(J_n) \leq 1$, and, therefore
\[
\limsup\limits_{n \to \infty} H_{h_n}(J_n) \leq 1.
\]
Indeed,
\[
 \sum\limits_{F \in \mathcal{F}_{k}^n} \left| F \right|^{h_n} = \sum\limits_{j_1\dots j_k}(a_{j_1}\dots a_{j_k})^{h_n}
\]
where $a_j = b_{j-1} - b_{j}$ and the sum runs over all sequences  $(j_1 \dots j_k)  \in \{ 1,2, \dots, n\}^k$. 
\\
Next,
\[
 \sum\limits_{j_1\dots j_k}(a_{j_1}\dots a_{j_k})^{h_n}   = (a_1^{h_n} + a_2^{h_n} + \dots + a_{n}^{h_n})^{k} 
\]
\[ 
 = ((a_1^{h_n} + a_2^{h_n} + \dots + a_{n}^{h_n}))^k  = 1,
\]
which ends the proof.
\end{proof}
Note that this theorem does not require conditions \eqref{warunek2} nor \eqref{warunek3}.
\section{Estimate from below}
In this section we will focus on showing that the Hausdorff measure of the limit set $J_n$ obtained from linear IFS $S_n$ is continuous, which means
\[
\lim\limits_{n\to \infty} H_{h_n}(J_n) = 1
\]
as long as condition
\begin{equation}\tag{\ref{warunek2}}
\lim\limits_{n \to \infty} \left(1-h_n\right) \ln{n} = 0 
\end{equation}
and
\begin{equation}\tag{\ref{warunek3}}
\sup \limits_{k\in \mathbb{N}} \left \{\frac{b_k-b_{k+1}}{b_{k+1}} \right \} < \infty
\end{equation}
are met. 
\\
Our strategy for proving the estimate of the Hausdorff measure from above is to prove that the upper limit of the densities $d_n$ of all intervals contained in $[0,1]$ is at most $1$. 

\subsection{Strategy of the proof}
The proof is split into three main parts. The first one is proving that the lower limit of the densities on the intervals of the form $[0,r]$ is greater or equal to 1. The second one is proving the same on the intervals of the form $[b_{k+l}, b_{k}]$. The final part is putting those theorems together and proving step by step that
\begin{equation}\label{eq: linear lower bound}
    \varliminf\limits_{n\to \infty} H_{h_n}(J_n) \geq 1.
\end{equation}

\subsection{Estimate on the intervals $[0,r]$}

Now, we will focus on preliminary lemmas required to estimate the density on the intervals of the form $[0,r]$. The special case where instead of Condition \eqref{warunek3}, we assume a much stronger condition:
\begin{equation}\tag{\ref{warunek dodatkowy 2}}
    \lim \limits_{k \to \infty} \frac{b_{k-1}}{b_{k}} = 1 
\end{equation}
or, equivalently, 
\[
    \lim \limits_{k \to \infty}\frac{b_{k-1}-b_{k}}{b_{k}}=0
\]
can be simplified and conducted using similar ideas to the proof found in \cite{UZ}.
\begin{lemma}\label{b przez a}
Let  $\mathcal{F}_l^n$ be the set of all intervals of $l$-th generation of the IFS as defined in \Cref{generacje fnl}, generated by $S_n = \{g_1 \dots g_n\}$. Furthermore, let $b_k$ satisfy the following 
\[
\sup \limits_{k\in \mathbb{N}} \left \{\frac{b_{k-1}-b_{k}}{b_{k}} \right \} < \infty.
\]
Then for any $\varepsilon >0$ there exists $l_0$ such that for every $l>l_0$, $n \in \mathbb{N}$ and for every interval $[a,b] \in \mathcal{F}_l^n$ the following holds:
\[
\frac{b}{a} \leq 1 + \varepsilon.
\]
\end{lemma}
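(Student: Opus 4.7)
The plan is to exploit the affine structure of the $g_j$ together with a uniform contraction rate. The key observation is that while a single first-generation interval $[b_k, b_{k-1}]$ can have $b_{k-1}/b_k$ bounded away from $1$, any $l$-th generation cylinder is contained in some $[b_{i_1}, b_{i_1-1}]$ and occupies only a $\lambda^{l-1}$ fraction of that containing interval, for a uniform $\lambda < 1$. Concretely, $|g_j'| = b_{j-1}-b_j$; for $j=1$ this is $1-b_1$, while for $j \geq 2$ it is at most $b_{j-1} \leq b_1$. Since $b_1 \in (0,1)$, the quantity $\lambda := \max(b_1, 1-b_1)$ lies in $[1/2, 1)$ and satisfies $|g_j'| \leq \lambda$ for every $j \in \mathbb{N}$.

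Next, I fix $[a,b] = g_{i_1}\circ g_{i_2}\circ\cdots\circ g_{i_l}([0,1]) \in \mathcal{F}_l^n$ and write it as $g_{i_1}([c,d])$, where $[c,d] := g_{i_2}\circ\cdots\circ g_{i_l}([0,1]) \subseteq [0,1]$. Using $g_{i_1}(x) = b_{i_1-1} - (b_{i_1-1}-b_{i_1})x$, which is affine and decreasing, I obtain
\[
b - a = (b_{i_1-1}-b_{i_1})(d-c), \qquad a = g_{i_1}(d) \geq g_{i_1}(1) = b_{i_1}.
\]
Dividing gives
\[
\frac{b}{a} \leq 1 + \frac{b_{i_1-1}-b_{i_1}}{b_{i_1}}\cdot(d-c).
\]
The first factor on the right is bounded by the constant $M := \sup_k (b_{k-1}-b_k)/b_k < \infty$ supplied by the hypothesis, and the second factor is the length of $[c,d]$, which equals $\prod_{j=2}^l |g_{i_j}'| \leq \lambda^{l-1}$. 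Hence $b/a \leq 1 + M\lambda^{l-1}$, and one picks $l_0$ with $M\lambda^{l_0-1} < \varepsilon$ to finish the argument.

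The constants $M$ and $\lambda$ depend only on the fixed sequence $(b_k)$—not on $n$ or on the word $(i_1,\dots,i_l)$—so the estimate is automatically uniform in the ambient IFS size and in the choice of cylinder. The only delicate point, and essentially the entire content of the lemma, is the pairing of the hypothesis with the bound $a \geq b_{i_1}$: together they absorb the sensitivity of the first-generation ratio to how small $b_{i_1}$ can become, leaving only the exponentially small factor $\lambda^{l-1}$ coming from the remaining $l-1$ contractions.
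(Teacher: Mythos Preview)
Your proof is correct and follows essentially the same route as the paper: both write $b-a=\prod_j(b_{i_j-1}-b_{i_j})$, use $a\ge b_{i_1}$ to bound $(b-a)/a$ by $M\cdot\prod_{j\ge 2}(b_{i_j-1}-b_{i_j})$, and then invoke a uniform contraction rate to get $M\alpha^{l-1}$. The only difference is cosmetic: the paper simply asserts the existence of some $\alpha\in(0,1)$ with $b_{j-1}-b_j\le\alpha$ for all $j$, whereas you supply the explicit value $\lambda=\max(b_1,1-b_1)$, which is a slight improvement in clarity.
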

\begin{proof}
    
Let $[a,b] \in \mathcal{F}_l^n$. Recall that $a_k$ = $|b_k-b_{k-1}|$. Then the following holds 
\[
b-a = a_{i_1}\cdot a_{i_2}\dots a_{i_l},
\]
because $[a,b] = g_{i_1}\circ g_{i_2} \circ \dots \circ g_{i_l}([0,1])$ for some sequence of maps $g_{i_1}, g_{i_2}, \dots ,g_{i_l}$, $i_1,\dots i_l \leq n$ . Now, choose some $\alpha \in (0,1)$ such that $0 < a_j \leq \alpha $ for all $j \in \mathbb{N}$ and consider the quotient
\begin{multline*}
\frac{b}{a} = \frac{a + a_{i_1}\cdot a_{i_2}\dots a_{i_l}}{a} = 1 + \frac{a_{i_1}\cdot a_{i_2}\dots a_{i_l}}{a} \leq
\\
\leq  1 + \sup \limits_{k\in \mathbb{N}} \left \{\frac{b_{k-1}-b_{k}}{b_{k}} \right \} \cdot a_{i_2}\dots a_{i_l} \leq  1 +\sup \limits_{k\in \mathbb{N}} \left \{\frac{b_{k-1}-b_{k}}{b_{k}} \right \} \cdot \alpha^{l-1} \leq 1 + \varepsilon.
\end{multline*}
where the first inequality follows from the fact that $[a,b] \subset [b_{i_1},b_{i_1-1}]$ and thus $a \geq b_{i_1}$. The final inequality holds for sufficiently large $l$.
\end{proof}

\begin{prop}\label{dziobanie}
If IFS fulfills the conditions
\eqref{warunek2} and \eqref{warunek3},
then
\[
\limsup \limits_{n \to \infty} \left ( \sup \left \{ \frac{m_n([0,r])}{r^{h_n}}: r \in (0,1)\right \} \right ) \leq 1.
\]
\end{prop}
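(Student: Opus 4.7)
Set $B_n := \sup_{r \in (0,1)} m_n([0,r])/r^{h_n}$; since $J_n \subset [b_n,1]$ the supremum is finite for each $n$. The plan is to derive a self-referential estimate of the form $B_n \leq G_n(B_n)$ whose solutions are forced to be $1+o(1)$ under conditions \eqref{warunek2} and \eqref{warunek3}.

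\textbf{Self-similar decomposition and Jensen step.} Fix $r \in [b_k, b_{k-1}]$ with $1 \leq k \leq n$, write $a_k := b_{k-1}-b_k$, and set $t = (r-b_k)/a_k \in [0,1]$. The map $g_k$ sends $[0,t]$ onto $[b_k, r]$ with ratio $a_k$, so Corollary \ref{cor: wlasnosc niezmienniczosci mn} gives
\[
m_n([0,r]) = \sum_{j=k+1}^{n} a_j^{h_n} + a_k^{h_n}\, m_n([0,t]).
\]
Applying Lemma \ref{lem: oszacowanie na sume wi} to the weights $a_j/(b_k - b_n)$ bounds the tail by $(b_k - b_n)^{h_n}(n-k)^{1-h_n} \leq b_k^{h_n} n^{1-h_n}$. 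Using $m_n([0,t]) \leq B_n t^{h_n}$, dividing by $r^{h_n}$, and substituting $u = a_k t/b_k$ with $\alpha_n := n^{1-h_n}$, I obtain
\[
\frac{m_n([0,r])}{r^{h_n}} \leq f(u; B_n), \qquad f(u; B) := \frac{\alpha_n + B\, u^{h_n}}{(1+u)^{h_n}}.
\]
Since $u \in [0, a_k/b_k] \subseteq [0, C]$ with $C := \sup_k a_k/b_k < \infty$ by \eqref{warunek3}, passing to the supremum over $r$ yields $B_n \leq \sup_{u \in [0, C]} f(u; B_n)$.

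\textbf{Analysis of $f$ and conclusion.} The derivative of $u \mapsto f(u; B_n)$ vanishes uniquely at $u^* = (B_n/\alpha_n)^{1/(1-h_n)}$, the global maximizer on $[0, \infty)$ with maximum value $\alpha_n(1+u^*)^{1-h_n}$. Two cases arise. If $u^* \leq C$, the defining condition itself translates into $B_n \leq \alpha_n C^{1-h_n}$. Otherwise $f(\cdot; B_n)$ is increasing on $[0, C]$, so the constraint $B_n \leq f(C; B_n)$ rearranges to $B_n \leq \alpha_n/\bigl((1+C)^{h_n} - C^{h_n}\bigr)$. Thus in either case
\[
B_n \leq \max\!\left(\alpha_n C^{1-h_n},\ \frac{\alpha_n}{(1+C)^{h_n} - C^{h_n}}\right).
\]
As $n \to \infty$, $\alpha_n = n^{1-h_n} \to 1$ by \eqref{warunek2}, $C^{1-h_n} \to 1$ since $1-h_n \to 0$, and $(1+C)^{h_n} - C^{h_n} \to 1$ since $h_n \to 1$; hence $\limsup_n B_n \leq 1$.

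\textbf{Main obstacle.} Condition \eqref{warunek3} only provides a bounded (non-vanishing) ratio $a_k/b_k$, so one cannot iterate a direct contraction in $r$ to telescope the self-similar identity. The whole gain must instead be extracted from the factor $n^{1-h_n}$ controlling the tail $\sum_{j>k} a_j^{h_n}$, which is exactly where \eqref{warunek2} enters essentially; the two-case analysis of $f(\cdot; B_n)$ is the technical device that converts this tail control into a uniform density bound at the origin despite the non-vanishing endpoint $u = C$.
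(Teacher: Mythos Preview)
Your decomposition step contains a genuine error. In this paper each $f_k$ is \emph{decreasing}, so $g_k=f_k^{-1}$ is also decreasing: $g_k(0)=b_{k-1}$ and $g_k(1)=b_k$. Hence $g_k$ does \emph{not} send $[0,t]$ onto $[b_k,r]$; rather $g_k^{-1}([b_k,r])=[1-t,1]$, and the correct identity is
\[
m_n([0,r]) \;=\; \sum_{j=k+1}^{n} a_j^{\,h_n} \;+\; a_k^{\,h_n}\, m_n\bigl([1-t,\,1]\bigr).
\]
Since the system has no symmetry under $x\mapsto 1-x$, you cannot bound $m_n([1-t,1])$ by $B_n\,t^{h_n}$, and the self-referential inequality $B_n\le \sup_{u\in[0,C]}f(u;B_n)$ does not follow. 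This orientation reversal is exactly why the paper's proof iterates to an \emph{odd} depth $l$ and tracks the full $l$-level nested expansion before closing up, rather than recursing in one step.

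Your calculus on $f(u;B)$ is correct and the overall strategy can be salvaged: introduce the companion quantity $\tilde B_n:=\sup_{r}m_n([r,1])/(1-r)^{h_n}$. One application of the recursion turns an interval $[0,r]$ into one of the form $[1-t,1]$ and vice versa, yielding the coupled pair
\[
B_n\le \sup_{u\in[0,C]} f(u;\tilde B_n),\qquad \tilde B_n\le \sup_{v\in[0,\tilde C]} f(v;B_n),
\]
where the second comes from $m_n([r,1])=\sum_{j<k}a_j^{h_n}+a_k^{h_n}m_n([0,s])$ and $\tilde C:=\sup_{k\ge 2}a_k/(1-b_{k-1})\le 1/a_1<\infty$ (the case $k=1$ gives $\tilde B_n\le B_n$ directly). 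Setting $M_n=\max(B_n,\tilde B_n)$ and using that $f$ is nondecreasing in its second argument gives $M_n\le \sup_{u\in[0,C']}f(u;M_n)$ with $C'=\max(C,\tilde C)$, after which your two-case analysis applies verbatim to yield $\limsup_n M_n\le 1$. As written, however, the proof is incomplete.
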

\begin{proof}
Fix $\varepsilon >0$. Let $n \in \mathbb{N}$. Take an arbitrary $r \in (0,1)$. If $r \leq b_{n}$, then $m_n([0,r]) = 0$. So, let $r > b_{n}$. From  \Cref{b przez a}, we can find odd $l \in \mathbb{N}$ large enough such that $\frac{b}{a} \leq 1 + \frac{\varepsilon}{4}$ for all intervals $[a,b] \in \cup_{n \in \mathbb{N}} \mathcal{F}_l^n$. The integer $l$ is chosen to be odd in order to guarantee the form of the last component in the quotient \eqref{eq: measure div r}, which will appear later on, because the maps in our IFS reverse the orientation. 
\par If $r \not \in \bigcup\limits_{F \in \mathcal{F}^n_l} F$ (i.e. if the point r is in some "gap" of level $l$ of the Cantor set $J_n$) then we can replace $r$ with the closest $b$, such that $b<r$ and $[a,b]$ is in $l$-th generation cylinder, without changing measure $m_n$ of the interval $(0,r)$ (see Figure \ref{fig:r to left}), meaning $m_n((0,r)) = m_n((0,b])$.  We also get that ${\rm diam}((0,r)) > {\rm diam}((0,b])$, hence $\frac{m_n((0,r))}{{\rm diam}((0,r))} \leq \frac{m_n((0,b])}{{\rm diam}((0,b])}$.
\\

\begin{figure}
    \centering

\tikzset{every picture/.style={line width=0.75pt}} 

\begin{tikzpicture}[x=0.75pt,y=0.75pt,yscale=-1,xscale=1] 

\draw    (101,50.5) -- (247,50.5) ;
\draw   (239.96,66.78) .. controls (249.16,56.01) and (249.35,45.08) .. (240.51,33.99) ;
\draw   (107.38,33.95) .. controls (98.62,45.09) and (98.87,56.02) .. (108.14,66.74) ;
\draw    (252,50.5) -- (351,50.5) ;
\draw   (346.23,66.78) .. controls (352.47,56) and (352.59,45.07) .. (346.6,33.99) ;
\draw   (256.33,33.95) .. controls (250.38,45.09) and (250.55,56.02) .. (256.84,66.73) ;
\draw    (399,50.5) -- (467,50.5) ;
\draw   (463.72,66.78) .. controls (468,56) and (468.09,45.07) .. (463.98,33.99) ;
\draw   (401.97,33.95) .. controls (397.89,45.09) and (398,56.01) .. (402.32,66.73) ;
\draw    (489,50.5) -- (557,50.5) ;
\draw   (553.72,66.78) .. controls (558.01,56) and (558.09,45.07) .. (553.98,33.99) ;
\draw   (491.97,33.95) .. controls (487.89,45.09) and (488,56.01) .. (492.32,66.73) ;
\draw [color={rgb, 255:red, 208; green, 2; blue, 27 }  ,draw opacity=1 ]   (372,42) -- (372,58.5) ;
\draw    (101,191.5) -- (247,191.5) ;
\draw   (239.96,207.78) .. controls (249.16,197.01) and (249.35,186.08) .. (240.51,174.99) ;
\draw   (107.38,174.95) .. controls (98.62,186.09) and (98.87,197.02) .. (108.14,207.74) ;
\draw    (252,191.5) -- (351,191.5) ;
\draw   (346.23,207.78) .. controls (352.47,197) and (352.59,186.07) .. (346.6,174.99) ;
\draw   (256.33,174.95) .. controls (250.38,186.09) and (250.55,197.02) .. (256.84,207.73) ;
\draw    (399,191.5) -- (467,191.5) ;
\draw   (463.72,207.78) .. controls (468,197) and (468.09,186.07) .. (463.98,174.99) ;
\draw   (401.97,174.95) .. controls (397.89,186.09) and (398,197.01) .. (402.32,207.73) ;
\draw    (489,191.5) -- (557,191.5) ;
\draw   (553.72,207.78) .. controls (558.01,197) and (558.09,186.07) .. (553.98,174.99) ;
\draw   (491.97,174.95) .. controls (487.89,186.09) and (488,197.01) .. (492.32,207.73) ;
\draw [color={rgb, 255:red, 208; green, 2; blue, 27 }  ,draw opacity=1 ]   (351,183) -- (351,199.5) ;
\draw [color={rgb, 255:red, 208; green, 2; blue, 27 }  ,draw opacity=1 ]   (374,95) -- (349,95) ;
\draw [shift={(347,95)}, rotate = 360] [color={rgb, 255:red, 208; green, 2; blue, 27 }  ,draw opacity=1 ][line width=0.75]    (10.93,-3.29) .. controls (6.95,-1.4) and (3.31,-0.3) .. (0,0) .. controls (3.31,0.3) and (6.95,1.4) .. (10.93,3.29)   ;
\draw    (372,103) -- (372,155.99) ;
\draw [shift={(372,157.99)}, rotate = 270] [color={rgb, 255:red, 0; green, 0; blue, 0 }  ][line width=0.75]    (10.93,-3.29) .. controls (6.95,-1.4) and (3.31,-0.3) .. (0,0) .. controls (3.31,0.3) and (6.95,1.4) .. (10.93,3.29)   ;

\draw (368,61) node [anchor=north west][inner sep=0.75pt]   [align=left] {\textcolor[rgb]{0.82,0.01,0.11}{r}};
\draw (569,40.4) node [anchor=north west][inner sep=0.75pt]    {$\mathcal{F}_{l}^{n}$};
\draw (569,181.4) node [anchor=north west][inner sep=0.75pt]    {$\mathcal{F}_{l}^{n}$};
\draw (249,70) node [anchor=north west][inner sep=0.75pt]   [align=left] {a};
\draw (348.23,71.78) node [anchor=north west][inner sep=0.75pt]   [align=left] {b};
\draw (248,211) node [anchor=north west][inner sep=0.75pt]   [align=left] {a};
\draw (348.23,212.78) node [anchor=north west][inner sep=0.75pt]  [color={rgb, 255:red, 208; green, 2; blue, 27 }  ,opacity=1 ] [align=left] {b };

\end{tikzpicture}
    \caption{Moving $r$ to the left without changing the value of  the measure $m_n([0,r])$}
    \label{fig:r to left}
\end{figure}
\par Thus, from now on we can assume, that $r \in F$ for some $F \in \mathcal{F}^n_l$. By $b_{q_1, q_2, \dots q_l-1}$ we denote the right endpoint of the interval of the $l$-th generation $\mathcal{F}_l^n$ such that
\[
b_{q_1, q_2, \dots q_l-1} = g_{q_1} \circ  g_{q_{2}} \circ \dots \circ g_{q_{l-1}} \circ g_{q_{l}}(0).
\]
Then there exists a unique sequence of numbers $q_1, q_2, \dots q_l \in \{1,2, \dots, n\}$, such that $r \in g_{q_1} \circ g_{q_2} \circ \dots \circ g_{q_l}([0,1])$, and hence
\begin{equation}\label{eq: r w kleszcze}    
b_{q_1, q_2, \dots q_l } <   r \leq   b_{q_1, q_2, \dots q_l-1}.
\end{equation}
Inequality \eqref{eq: r w kleszcze} holds this way due to fact that $l$ is odd - see Figure \ref{fig:ab lth generation}.

\begin{figure}
    \centering

\tikzset{every picture/.style={line width=0.75pt}} 

\begin{tikzpicture}[x=0.75pt,y=0.75pt,yscale=-1,xscale=1]

\draw    (58,29) -- (412,29) ;
\draw    (58,38) -- (58,19) ;
\draw    (411,39) -- (411,20) ;
\draw    (368,39) -- (368,20) ;
\draw    (299,39) -- (299,20) ;
\draw [color={rgb, 255:red, 208; green, 2; blue, 27 }  ,draw opacity=1 ]   (341,39) -- (341,20) ;
\draw    (182,39) -- (182,20) ;
\draw    (141,39) -- (141,20) ;
\draw    (299,39) -- (61.84,140.21) ;
\draw [shift={(60,141)}, rotate = 336.89] [color={rgb, 255:red, 0; green, 0; blue, 0 }  ][line width=0.75]    (10.93,-3.29) .. controls (6.95,-1.4) and (3.31,-0.3) .. (0,0) .. controls (3.31,0.3) and (6.95,1.4) .. (10.93,3.29)   ;
\draw    (60,151) -- (414,151) ;
\draw    (60,160) -- (60,141) ;
\draw    (413,161) -- (413,142) ;
\draw    (301,161) -- (301,142) ;
\draw [color={rgb, 255:red, 208; green, 2; blue, 27 }  ,draw opacity=1 ]   (231,161) -- (231,142) ;
\draw    (184,161) -- (184,142) ;
\draw    (109,161) -- (109,142) ;
\draw    (304,199) .. controls (303,229) and (242,195) .. (241,229) ;
\draw    (183,200) .. controls (182,230) and (242,195) .. (241,229) ;
\draw    (241,229) -- (241,247) ;
\draw [shift={(241,249)}, rotate = 270] [color={rgb, 255:red, 0; green, 0; blue, 0 }  ][line width=0.75]    (10.93,-3.29) .. controls (6.95,-1.4) and (3.31,-0.3) .. (0,0) .. controls (3.31,0.3) and (6.95,1.4) .. (10.93,3.29)   ;
\draw    (57,305) -- (411,305) ;
\draw    (57,314) -- (57,295) ;
\draw    (410,315) -- (410,296) ;
\draw    (298,315) -- (298,296) ;
\draw [color={rgb, 255:red, 208; green, 2; blue, 27 }  ,draw opacity=1 ]   (237,315) -- (237,296) ;
\draw    (181,315) -- (181,296) ;
\draw    (368,39) -- (412.2,140.17) ;
\draw [shift={(413,142)}, rotate = 246.4] [color={rgb, 255:red, 0; green, 0; blue, 0 }  ][line width=0.75]    (10.93,-3.29) .. controls (6.95,-1.4) and (3.31,-0.3) .. (0,0) .. controls (3.31,0.3) and (6.95,1.4) .. (10.93,3.29)   ;
\draw    (185,250) -- (58.84,304.21) ;
\draw [shift={(57,305)}, rotate = 336.75] [color={rgb, 255:red, 0; green, 0; blue, 0 }  ][line width=0.75]    (10.93,-3.29) .. controls (6.95,-1.4) and (3.31,-0.3) .. (0,0) .. controls (3.31,0.3) and (6.95,1.4) .. (10.93,3.29)   ;
\draw    (310,253) -- (409.22,304.08) ;
\draw [shift={(411,305)}, rotate = 207.24] [color={rgb, 255:red, 0; green, 0; blue, 0 }  ][line width=0.75]    (10.93,-3.29) .. controls (6.95,-1.4) and (3.31,-0.3) .. (0,0) .. controls (3.31,0.3) and (6.95,1.4) .. (10.93,3.29)   ;

\draw (53,45) node [anchor=north west][inner sep=0.75pt]   [align=left] {0};
\draw (406,46) node [anchor=north west][inner sep=0.75pt]   [align=left] {1};
\draw (337,46) node [anchor=north west][inner sep=0.75pt]  [color={rgb, 255:red, 208; green, 2; blue, 27 }  ,opacity=1 ] [align=left] {r};
\draw (344,44) node [anchor=north west][inner sep=0.75pt]   [align=left] {$\displaystyle b_{q_{1}-1}$};
\draw (289,44) node [anchor=north west][inner sep=0.75pt]   [align=left] {$\displaystyle b_{q_{1}}$};
\draw (232,42) node [anchor=north west][inner sep=0.75pt]   [align=left] {...};
\draw (173,47) node [anchor=north west][inner sep=0.75pt]   [align=left] {$\displaystyle b_{n}$};
\draw (135,46) node [anchor=north west][inner sep=0.75pt]   [align=left] {$\displaystyle b_{n+1}$};
\draw (227,166) node [anchor=north west][inner sep=0.75pt]  [color={rgb, 255:red, 208; green, 2; blue, 27 }  ,opacity=1 ] [align=left] {r};
\draw (397,168) node [anchor=north west][inner sep=0.75pt]   [align=left] {$\displaystyle b_{q_{1} -1}$};
\draw (51,168) node [anchor=north west][inner sep=0.75pt]   [align=left] {$\displaystyle b_{q_{1}}$};
\draw (144.78,166) node [anchor=north west][inner sep=0.75pt]   [align=left] {...};
\draw (175,168) node [anchor=north west][inner sep=0.75pt]   [align=left] {$\displaystyle b_{q_{1} ,q_{2}-1}$};
\draw (95.46,168) node [anchor=north west][inner sep=0.75pt]   [align=left] {$\displaystyle b_{q_{1} ,\ 1}$};
\draw (277,168) node [anchor=north west][inner sep=0.75pt]   [align=left] {$\displaystyle b_{q_{1} ,q_{2} }$};
\draw (435,22) node [anchor=north west][inner sep=0.75pt]   [align=left] {First generation};
\draw (435,144) node [anchor=north west][inner sep=0.75pt]   [align=left] {Second generation};
\draw (233,320) node [anchor=north west][inner sep=0.75pt]  [color={rgb, 255:red, 208; green, 2; blue, 27 }  ,opacity=1 ] [align=left] {r};
\draw (398,320) node [anchor=north west][inner sep=0.75pt]   [align=left] {$\displaystyle b_{q_{1} ,q_{2} ,...,q_{l-1}}$};
\draw (232.78,258) node [anchor=north west][inner sep=0.75pt]   [align=left] {...};
\draw (105.78,322) node [anchor=north west][inner sep=0.75pt]   [align=left] {...};
\draw (130,320) node [anchor=north west][inner sep=0.75pt]   [align=left] {$\displaystyle b_{q_{1} ,q_{2} ,...,q_{l-1} ,q_{l}}$};
\draw (255,320) node [anchor=north west][inner sep=0.75pt]   [align=left] {$\displaystyle b_{q_{1} ,q_{2} ,...,q_{l-1} ,q_{l}-1}$};
\draw (18,320) node [anchor=north west][inner sep=0.75pt]   [align=left] {$\displaystyle b_{q_{1} ,q_{2} ,...,q_{l-1}-1}$};
\draw (373.78,319) node [anchor=north west][inner sep=0.75pt]   [align=left] {...};
\draw (435,298) node [anchor=north west][inner sep=0.75pt]   [align=left] {$\displaystyle l$-th generation};
\draw (175,350) node [anchor=north west][inner sep=0.75pt]   [align=left] [rotate = 90] {$\displaystyle = $};
\draw (175,350) node [anchor=north west][inner sep=0.75pt]   [align=left] {$\displaystyle a $};

\draw (295,350) node [anchor=north west][inner sep=0.75pt]   [align=left] [rotate = 90] {$\displaystyle = $};
\draw (295,350) node [anchor=north west][inner sep=0.75pt]   [align=left] {$\displaystyle b $};
\end{tikzpicture}
    \caption{Interval $[a,b]$ as endpoints of $l$-th generation}
    \label{fig:ab lth generation}
\end{figure}
Let us focus on the estimate of the numerator $m_n([0,r])$, for which we estimate the measure of $[0,r]$ from above using the fact that $m_n([0,r]) \leq m_n([0,b_{q_1,q_2,\dots, q_{l-1}, q_{l}-1}])$. The measure of the interval $[0,b_{q_1,q_2,\dots, q_{l-1}, q_{l}-1}]$ can be expressed as the measure of the union of the following intervals $[0,b_{q_1}]$, $[b_{q_1}, b_{q_1,q_2}]$, $\dots$, $[ b_{q_1,q_2, \dots q_{l-1}},b_{q_1,q_2, \dots q_{l-1}, q_l-1}]$. The measure of the first interval is equal to
\[
m_n([0,b_{q_1}]) = \sum \limits_{j=q_1}^{n-1}(b_{j}-b_{j+1})^{h_n}.
\]
The measure of the interval  $[b_{q_1}, b_{q_1,q_2}]$ is equal to
\[
m_n([b_{q_1}, b_{q_1,q_2}]) = (b_{q_1-1}-b_{q_1})^{h_n} \sum \limits_{j=0}^{q_2-1}(b_{j}-b_{j+1})^{h_n}.
\]
Using induction, we have that the measure of the interval $[0,b_{q_1,q_2,\dots, q_{l-1}, q_{l}-1}]$ is as follows
\begin{equation}\label{eq: rozpisanie miary}    
m_n([0,r]) \leq m_n([0,b_{q_1,q_2,\dots, q_{l-1}, q_{l}-1}]) =
\end{equation}
\\
\\
\resizebox{\textwidth}{!}{$
= \sum \limits_{j=q_1}^{n-1}(b_{j}-b_{j+1})^{h_n} + (b_{q_1-1}-b_{q_1})^{h_n} \sum \limits_{j=0}^{q_2-1}(b_{j}-b_{j+1})^{h_n} + \dots + \prod \limits _{k=1}^{l-1}(b_{q_k-1}-b_{q_k})^{h_n}\sum  \limits_{j=q_l-1}^{n-1} (b_{j}-b_{j+1})^{h_n}.
$}
Note that the limits of the last summand are due to $l$ being odd. We thus have a sum of components, each of them raised to the power of $h_n$. We want to multiply and divide it by similar sum, but the whole sum will be raised to the power of $h_n$ instead of individual summands. This means that the expression in the right-hand side of formula \eqref{eq: rozpisanie miary} can be viewed as follows 
\\
\\
\resizebox{\textwidth}{!}{$
\frac{\sum \limits_{j=q_1}^{n-1}(b_{j}-b_{j+1})^{h_n} + (b_{q_1-1}-b_{q_1})^{h_n} \sum \limits_{j=0}^{q_2-1}(b_{j}-b_{j+1})^{h_n} + \dots + \prod \limits _{k=1}^{l-1}(b_{q_k-1}-b_{q_k})^{h_n}\sum  \limits_{j=q_l-1}^{n-1} (b_{j}-b_{j+1})^{h_n}}{\left[\sum \limits_{j=q_1}^{n-1}(b_{j}-b_{j+1}) + (b_{q_1-1}-b_{q_1}) \sum \limits_{j=0}^{q_2-1}(b_{j}-b_{j+1}) +\dots + \prod \limits _{k=1}^{l-1}(b_{q_k-1}-b_{q_k})\sum  \limits_{j=q_l-1}^{n-1} (b_{j}-b_{j+1})\right]^{h_n}} \cdot
$}
\\
\\
\resizebox{\textwidth}{!}{$
\cdot \left[\sum \limits_{j=q_1}^{n-1}(b_{j}-b_{j+1}) + (b_{q_1-1}-b_{q_1}) \sum \limits_{j=0}^{q_2-1}(b_{j}-b_{j+1}) + \dots + \prod \limits _{k=1}^{l-1}(b_{q_k-1}-b_{q_k})\sum  \limits_{j=q_l-1}^{n-1} (b_{j}-b_{j+1})\right]^{h_n}\leq
$}
\\
\\
\resizebox{\textwidth}{!}{$
\leq A \cdot \left[\sum \limits_{j=q_1}^{n-1}(b_{j}-b_{j+1}) + (b_{q_1-1}-b_{q_1}) \sum \limits_{j=0}^{q_2-1}(b_{j}-b_{j+1}) +\dots + \prod \limits _{k=1}^{l-1}(b_{q_k-1}-b_{q_k})\sum  \limits_{j=q_l-1}^{n-1} (b_{j}-b_{j+1})\right]^{h_n}
$}
\\
\\
\resizebox{\textwidth}{!}{$
\leq A \cdot \left[\sum \limits_{j=q_1}^{\infty}(b_{j}-b_{j+1}) + (b_{q_1-1}-b_{q_1}) \sum \limits_{j=0}^{q_2-1}(b_{j}-b_{j+1}) +\dots + \prod \limits _{k=1}^{l-1}(b_{q_k-1}-b_{q_k})\sum  \limits_{j=q_l-1}^{\infty} (b_{j}-b_{j+1})\right]^{h_n} = 
$}
\[
= A \cdot C,
\]
where we denoted by A the first quotient. In the last inequality, we replaced all of the sums running from $q_i$ to $n-1$ with the sums running from $q_i$ to infinity. Now we can estimate $r^{h_n}$ from below using the fact that $[0,r] \supseteq [0,b_{q_1,q_2,\dots, q_l+1}]$ and thus $r \geq b_{q_1,q_2,\dots, q_l+1}$.\\

\begin{figure}

\tikzset{every picture/.style={line width=0.75pt}} 

\begin{tikzpicture}[x=0.75pt,y=0.75pt,yscale=-1,xscale=1]

\draw    (255,48) -- (511.23,48) ;
\draw    (255,88) -- (255,37) ;
\draw [color={rgb, 255:red, 208; green, 2; blue, 27 }  ,draw opacity=1 ]   (435,88) -- (435,40) ;
\draw    (59,48) -- (175,48) -- (255,48) ;
\draw    (58,57) -- (58,38) ;
\draw   (58,23) -- (496,23) -- (496,48) -- (58,48) -- cycle ;
\draw [color={rgb, 255:red, 144; green, 19; blue, 254 }  ,draw opacity=1 ]   (58,23) -- (83,48) ;
\draw [color={rgb, 255:red, 144; green, 19; blue, 254 }  ,draw opacity=1 ]   (78,23) -- (103,48) ;
\draw [color={rgb, 255:red, 144; green, 19; blue, 254 }  ,draw opacity=1 ]   (98,23) -- (123,48) ;
\draw [color={rgb, 255:red, 144; green, 19; blue, 254 }  ,draw opacity=1 ]   (118,23) -- (143,48) ;
\draw [color={rgb, 255:red, 144; green, 19; blue, 254 }  ,draw opacity=1 ]   (138,23) -- (163,48) ;
\draw [color={rgb, 255:red, 144; green, 19; blue, 254 }  ,draw opacity=1 ]   (158,23) -- (183,48) ;
\draw [color={rgb, 255:red, 144; green, 19; blue, 254 }  ,draw opacity=1 ]   (178,23) -- (203,48) ;
\draw [color={rgb, 255:red, 144; green, 19; blue, 254 }  ,draw opacity=1 ]   (198,23) -- (223,48) ;
\draw [color={rgb, 255:red, 144; green, 19; blue, 254 }  ,draw opacity=1 ]   (218,23) -- (243,48) ;
\draw [color={rgb, 255:red, 144; green, 19; blue, 254 }  ,draw opacity=1 ]   (238,22) -- (263,47) ;
\draw [color={rgb, 255:red, 144; green, 19; blue, 254 }  ,draw opacity=1 ]   (258,23) -- (283,48) ;
\draw [color={rgb, 255:red, 144; green, 19; blue, 254 }  ,draw opacity=1 ]   (278,23) -- (303,48) ;
\draw [color={rgb, 255:red, 144; green, 19; blue, 254 }  ,draw opacity=1 ]   (298,24) -- (323,49) ;
\draw [color={rgb, 255:red, 144; green, 19; blue, 254 }  ,draw opacity=1 ]   (318,23) -- (343,48) ;
\draw [color={rgb, 255:red, 144; green, 19; blue, 254 }  ,draw opacity=1 ]   (338,24) -- (363,49) ;
\draw [color={rgb, 255:red, 144; green, 19; blue, 254 }  ,draw opacity=1 ]   (358,23) -- (383,48) ;
\draw [color={rgb, 255:red, 144; green, 19; blue, 254 }  ,draw opacity=1 ]   (378,23) -- (403,48) ;
\draw [color={rgb, 255:red, 144; green, 19; blue, 254 }  ,draw opacity=1 ]   (398,23) -- (423,48) ;
\draw [color={rgb, 255:red, 144; green, 19; blue, 254 }  ,draw opacity=1 ]   (418,23) -- (443,48) ;
\draw [color={rgb, 255:red, 144; green, 19; blue, 254 }  ,draw opacity=1 ]   (438,23) -- (463,48) ;
\draw [color={rgb, 255:red, 144; green, 19; blue, 254 }  ,draw opacity=1 ]   (458,23) -- (483,48) ;
\draw   (58,48) -- (367,48) -- (367,72.17) -- (58,72.17) -- cycle ;
\draw [color={rgb, 255:red, 65; green, 117; blue, 5 }  ,draw opacity=1 ]   (367,88) -- (367,37) ;
\draw    (58,89) -- (58,38) ;
\draw [color={rgb, 255:red, 65; green, 117; blue, 5 }  ,draw opacity=1 ]   (58,72.17) -- (81.9,48.27) ;
\draw [color={rgb, 255:red, 65; green, 117; blue, 5 }  ,draw opacity=1 ]   (78,72.17) -- (101.9,48.27) ;
\draw [color={rgb, 255:red, 65; green, 117; blue, 5 }  ,draw opacity=1 ]   (98,72.17) -- (121.9,48.27) ;
\draw [color={rgb, 255:red, 65; green, 117; blue, 5 }  ,draw opacity=1 ]   (118,72.17) -- (141.9,48.27) ;
\draw [color={rgb, 255:red, 65; green, 117; blue, 5 }  ,draw opacity=1 ]   (138,72.17) -- (161.9,48.27) ;
\draw [color={rgb, 255:red, 65; green, 117; blue, 5 }  ,draw opacity=1 ]   (158,72.17) -- (181.9,48.27) ;
\draw [color={rgb, 255:red, 65; green, 117; blue, 5 }  ,draw opacity=1 ]   (178,72.17) -- (201.9,48.27) ;
\draw [color={rgb, 255:red, 65; green, 117; blue, 5 }  ,draw opacity=1 ]   (198,72.17) -- (221.9,48.27) ;
\draw [color={rgb, 255:red, 65; green, 117; blue, 5 }  ,draw opacity=1 ]   (218,72.17) -- (241.9,48.27) ;
\draw [color={rgb, 255:red, 65; green, 117; blue, 5 }  ,draw opacity=1 ]   (238,72.17) -- (261.9,48.27) ;
\draw [color={rgb, 255:red, 65; green, 117; blue, 5 }  ,draw opacity=1 ]   (258,72.17) -- (281.9,48.27) ;
\draw [color={rgb, 255:red, 65; green, 117; blue, 5 }  ,draw opacity=1 ]   (278,72.17) -- (301.9,48.27) ;
\draw [color={rgb, 255:red, 65; green, 117; blue, 5 }  ,draw opacity=1 ]   (298,72.17) -- (321.9,48.27) ;
\draw [color={rgb, 255:red, 65; green, 117; blue, 5 }  ,draw opacity=1 ]   (318,72.17) -- (341.9,48.27) ;
\draw [color={rgb, 255:red, 65; green, 117; blue, 5 }  ,draw opacity=1 ]   (338,72.17) -- (361.9,48.27) ;
\draw [color={rgb, 255:red, 144; green, 19; blue, 254 }  ,draw opacity=1 ]   (496,85.52) -- (496,23) ;

\draw (431,89) node [anchor=north west][inner sep=0.75pt]  [color={rgb, 255:red, 208; green, 2; blue, 27 }  ,opacity=1 ] [align=left] {r};
\draw (303.78,91) node [anchor=north west][inner sep=0.75pt]   [align=left] {...};
\draw (328,89) node [anchor=north west][inner sep=0.75pt]   [align=left] {$\displaystyle b_{q_{1} ,q_{2} ,...,q_{l-1,} ,q_{l}}$};
\draw (453,89) node [anchor=north west][inner sep=0.75pt]   [align=left] {$\displaystyle b_{q_{1} ,q_{2} ,...,q_{l-1,} ,q_{l}-1}$};
\draw (216,89) node [anchor=north west][inner sep=0.75pt]   [align=left] {$\displaystyle b_{q_{1} ,q_{2} ,...,q_{l-1}}$};
\draw (178.78,91) node [anchor=north west][inner sep=0.75pt]   [align=left] {...};
\draw (52,96) node [anchor=north west][inner sep=0.75pt]   [align=left] {0};

\end{tikzpicture}
    \caption{$r$ between endpoints of intervals of $l$-th generation}
    \label{fig:r between endpoints}
\end{figure}

We have thus the following inequalities (see Figure \ref{fig:r between endpoints})
\[
m_{n}([ 0,r]) \leq m_{n}(\textcolor[rgb]{0.56,0.07,1}{[ 0,b_{q_{1} ,q_{2} ,...,q_{l-1,} ,q_{l}-1}]} )
\]
\[
|[ 0,r] |\geq |\textcolor[rgb]{0.25,0.46,0.02}{[ 0,b_{q_{1} ,q_{2} ,...,q_{l-1,} ,q_{l}}]} |
\]
The second observation gives the following
\\
\\
\resizebox{\textwidth}{!}{$
r^{h_n} \geq \left[\sum \limits_{j=q_1}^{\infty}(b_{j}-b_{j+1}) + (b_{q_1-1}-b_{q_1}) \sum \limits_{j=0}^{q_2-1}(b_{j}-b_{j+1}) +\dots + \prod \limits _{k=1}^{l-1}(b_{q_k-1}-b_{q_k})\sum  \limits_{j=q_l}^{\infty} (b_{j}-b_{j+1})\right]^{h_n}.
$}
Notice that this estimate for $r^{h_n}$ has the same form as the last component (C) in the estimate for $m_n([0,r])$, with the only difference being the last sum running from $j = q_{l}$ to infinity.
Hence
\begin{equation}\label{eq: measure div r}
\frac{m_n([0,r])}{r^{h_n}} \leq 
\end{equation}
\\
\\
\resizebox{\textwidth}{!}{$
A \cdot \frac{\left[\sum \limits_{j=q_1}^{\infty}(b_{j}-b_{j+1}) + (b_{q_1-1}-b_{q_1}) \sum \limits_{j=1}^{q_2-1}(b_{j}-b_{j+1}) +\dots + \prod \limits _{k=1}^{l-1}(b_{q_k-1}-b_{q_k})\sum  \limits_{j=q_l-1}^{\infty} (b_{j}-b_{j+1})\right]^{h_n}}{\left[\sum \limits_{j=q_1}^{\infty}(b_{j}-b_{j+1}) + (b_{q_1-1}-b_{q_1}) \sum \limits_{j=1}^{q_2-1}(b_{j}-b_{j+1}) +\dots + \prod \limits _{k=1}^{l-1}(b_{q_k-1}-b_{q_k})\sum  \limits_{j=q_l}^{\infty} (b_{j}-b_{j+1})\right]^{h_n}}=
$}
\[
= A \cdot B
\]
Here we denoted by B the remaining quotient. Focusing on A, we see that A is of the form:
\[
A =\sum\limits_{i = 1}^{t} w_i^{h_n},
\]
where $w_i$ is defined below in \Cref{defi: wi}.
We observe that in the numerator we have the sum of $t$ components of the form
\[
\prod \limits _{k=1}^{p}(b_{q_k-1}-b_{q_k}) \cdot (b_{j}-b_{j+1})
\]
each raised to the power $h_n$, and where $t = (n-q_1)+q_2+(n-q_3) + \ \dots \ + (n-q_l+1) \leq  n \cdot l$.
In the denominator, we have sum of the same components, but now the sum is raised to the power $h_n$. 
\begin{notation}\label{defi: wi}    
We denote $w_i$ as the $i$-th component from the sum in the nominator of the expression defining the value A, divided by the sum of all of those components.
\end{notation}
We immediately observe that sum of $w_i$ is equal to 1.
Because $l$ is fixed (see the beginning of the proof) and $h_n \to 1$ as $n \to \infty$, then for sufficiently large n we get
\[
A = \sum\limits_{i = 1}^{t} w_i^{h_n} \leq t \cdot \frac{1}{t^{h_n}} = t^{1-h_n} \leq (n\cdot l) ^ {1 - h_n} = l^{1-h_n} \cdot n^{1-h_n}\leq  1 + \frac{\varepsilon}{2}.
\]
The first inequality comes from \Cref{lem: oszacowanie na sume wi}. The last inequality follows from the assumption that $n^{1-h_n} \to 1$ and $h_n \to 1$ as $n \to \infty$, and l is fixed. For sufficiently large $n$ then, both of $l^{1-h_n}$ and $n^{1-h_n}$ are less than or equal to $1+\frac{\varepsilon}{5}$ which proves that the inequality holds.
\\
Focusing now on the second part of the product
\\
\\
\resizebox{\textwidth}{!}{$
 B = \left [\frac{\sum \limits_{j=q_1}^{\infty}(b_{j}-b_{j+1}) + (b_{q_1-1}-b_{q_1}) \sum \limits_{j=1}^{q_2-1}(b_{j}-b_{j+1}) +\dots + \prod \limits _{k=1}^{l-1}(b_{q_k-1}-b_{q_k})\sum  \limits_{j=q_l-1}^{\infty} (b_{j}-b_{j+1})}{\sum \limits_{j=q_1}^{\infty}(b_{j}-b_{j+1}) + (b_{q_1-1}-b_{q_1}) \sum \limits_{j=1}^{q_2-1}(b_{j}-b_{j+1}) +\dots + \prod \limits _{k=1}^{l-1}(b_{q_k-1}-b_{q_k})\sum  \limits_{j=q_l}^{\infty} (b_{j}-b_{j+1})}\right ]^{h_n} =
$}
\[
=\left[ \frac{b}{a} \right]^{h_n},
\]
where $[a,b]$ is the unique interval in $\mathcal{F}_l^n$ containing point $r$, see beginning of the proof. Now, in the beginning of the proof of this proposition, for a given $\varepsilon>0$, we selected integer $l_0$ such that for all integer $l>l_0$ and all $n \in \N$ and for every interval $[a,b] \in \mathcal{F}_l^n$, the following holds $\frac{b}{a}\leq 1+\frac{\varepsilon}{4}$. Thus we get 
\[
B = \left[ \frac{b}{a} \right]^{h_n} \leq \left[ 1+\frac{\varepsilon}{4} \right]^{h_n} \leq 1 + \frac{\varepsilon}{4}.
\]
Putting those results together, we get that there exists $n_0 \in \mathbb{N}$ such that for all $n \geq n_0$ and all $r \in (0,1)$
\[
\frac{m_n([0,r])}{r^{h_n}} \leq A \cdot B \leq (1+\frac{\varepsilon}{2})\cdot(1+\frac{\varepsilon}{4}) \leq 1 + \varepsilon,
\]
which ends the proof.
\end{proof}
 \subsection{Estimate on the intervals $[b_{p+q},b_p]$}
We proved the first of two essential propositions in the proof of the lower bound \eqref{eq: linear lower bound} - the fact that upper limit of the densities of intervals of the form $(0,r)$ is smaller than or equal to 1. The next step is to prove that the upper limit of the densities (defined in Definition \ref{density}) of the intervals $[b_{p+q},b_p]$ is at most equal to one. In this Proposition we only use the Condition \eqref{warunek3}.
\begin{prop}\label{l do l+q}
If
\[
\lim \limits_{n\to \infty} (1-h_n)\ln n = 1,
\]
then
\[
\limsup \limits_{n \to \infty} \left ( \sup \left \{ \frac{m_n([b_{p+q},b_{p}])}{(b_{p}-b_{p+q})^{h_n}}: 0 \leq p<p+q \right \} \right ) \leq 1.
\]
\end{prop}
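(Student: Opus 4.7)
The plan is to reduce the ratio to a direct Jensen-style estimate handled by \Cref{lem: oszacowanie na sume wi}. First I will dispose of the edge cases in which $[b_{p+q},b_p]$ extends past the support of $m_n$: since $J_n\subset[b_n,1]$, we have $m_n([0,b_n])=0$. Hence if $p\geq n$ the ratio is zero, and if $p<n<p+q$ then replacing $q$ by $n-p$ keeps the numerator unchanged while strictly decreasing the denominator, so the ratio only increases. It therefore suffices to bound the ratio uniformly over pairs $(p,q)$ with $p+q\leq n$.

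In this regime $[b_{p+q},b_p]$ is the disjoint union of the first-generation cylinders $[b_{k+1},b_k]=g_{k+1}([0,1])$ for $k=p,\dots,p+q-1$. The scaling property in \Cref{cor: wlasnosc niezmienniczosci mn} gives $m_n([b_{k+1},b_k])=(b_k-b_{k+1})^{h_n}$. Writing $a_k:=b_{k-1}-b_k$ for brevity, this produces the clean identity
\[
\frac{m_n([b_{p+q},b_p])}{(b_p-b_{p+q})^{h_n}}=\frac{\sum_{k=p+1}^{p+q}a_k^{h_n}}{\bigl(\sum_{k=p+1}^{p+q}a_k\bigr)^{h_n}}=\sum_{k=p+1}^{p+q}w_k^{h_n},
\]
where $w_k:=a_k\big/\sum_{j=p+1}^{p+q}a_j$ forms a probability vector on $q$ atoms.

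Now \Cref{lem: oszacowanie na sume wi}, applied to these weights with exponent $h_n\in(0,1)$, bounds the sum by $q^{1-h_n}$; since $q\leq n$ we obtain $q^{1-h_n}\leq n^{1-h_n}=\exp\bigl((1-h_n)\ln n\bigr)$. Condition \eqref{warunek2} forces the exponent to tend to $0$, so $n^{1-h_n}\to 1$; taking first the supremum over all admissible pairs $(p,q)$ and then the $\limsup$ in $n$ closes the argument. I do not anticipate a serious obstacle: the only slightly delicate point is the edge case $p+q>n$, which is disposed of uniformly by the trivial monotonicity argument above, and the rest is algebra plus a single application of Jensen's inequality. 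Notably, Condition \eqref{warunek3} plays no role here; the geometry of the $b_k$ enters only through the identity $b_p-b_{p+q}=\sum_{k=p+1}^{p+q}a_k$.
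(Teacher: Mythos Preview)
Your proof is correct and follows essentially the same approach as the paper: the same reduction to $p+q\le n$ via monotonicity, the same identification $m_n([b_{k+1},b_k])=(b_k-b_{k+1})^{h_n}$, and the same application of \Cref{lem: oszacowanie na sume wi} to bound $\sum w_k^{h_n}\le q^{1-h_n}\le n^{1-h_n}\to 1$. Your closing remark that Condition~\eqref{warunek3} plays no role is also consistent with the paper's proof, which only invokes~\eqref{warunek2}.
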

\begin{proof}
Fix some $p \geq 0$, $q \geq 1$. If $p+q > n$ then we can replace interval $[b_{p+q},b_{p}]$ with interval $[b_n, b_p]$ and the measure of both intervals is the same and the diameter of the second one is smaller, thus having larger density. Hence, we can focus on the case when $p+q \leq n$. Then we have that
\[
 \frac{m_n([b_{p+q},b_{p}])}{(b_{p}-b_{p+q})^{h_n}} = \frac{\sum \limits_{k = p}^{p+q-1} (b_k-b_{k+1})^{h_n}}{(b_{p}-b_{p+q})^{h_n}}=\frac{\sum \limits_{k = p}^{p+q-1} (b_k-b_{k+1})^{h_n}}{\left(\sum \limits_{k = p}^{p+q-1}\left( b_k-b_{k+1}\right)\right)^{h_n}} 
 \]
 \[
 = \sum \limits_{k = p}^{p+q-1} \left (\frac{b_k-b_{k+1}}{\sum \limits_{k = p}^{p+q} \left (b_k-b_{k+1}\right)}\right )^{h_n} = \sum \limits_{k = p}^{p+q-1} w_k^{h_n},
 \]
 where 
\[
w_k = \frac{\left(b_k-b_{k+1}\right)}{\sum \limits_{k = p}^{p+q-1} \left(b_k-b_{k+1} \right)} 
\]
and, of course, $\sum \limits_{k = p}^{p+q-1} w_k = 1$ . Now, from the \Cref{lem: oszacowanie na sume wi}, we get
\[
\sum \limits_{k = p}^{p+q-1} w_k^{h_n} \leq \sum \limits_{k = p}^{p+q-1} \left ( \frac{1}{q} \right )^{h_n} = (q) \left ( \frac{1}{q} \right )^{h_n} = \left(q \right )^{1-h_n} \leq n^{1-h_n} \leq 1 + \varepsilon 
\]
for sufficiently large n applying Condition \eqref{warunek2} yield the last inequality.
\end{proof}
 
\subsection{Putting the estimates together} 
The final part of the proof focuses on extending the family of sets with upper limit of the density at most $1$, up to a point from which we can conclude that the density of intervals in the family of all closed intervals contained in $[0,1]$ is at most $1$. Based on this fact, we can conclude that lower limit of the Hausdorff measure of the sets $J_n$ in its dimension is equal to 1 as $n \to \infty$.  
\\
In this part of the proof we use both conditions \eqref{warunek2} and  \eqref{warunek3}.
For each $k \in \mathbb{N}$ let  
\[
U_L(g_k(0), r )  = [g_k(0) - r, g_k(0)],
\]
where  $r \in (0, |g_k(0)-g_k(1)|) $. Observe that $g_k(1) = g_{k+1}(0)$. As a reminder, $g_k$ is the inverse map of $f_k$ and thus $g_k:[0,1] \to [b_k, b_{k-1}]$ with $g_k(0) = b_{k-1}$.
\begin{lemma}\label{r do gk}
\[
\limsup \limits_{n \to \infty} \left (\sup \left \{ \frac{m_n(U_L(g_k(0), r ))}{{\rm diam }(U_L(g_k(0), r ))^{h_n}}: k \in \mathbb{N} \text{ and } r \in (0, |g_k(0) - g_{k+1}(0)|)\right \} \right ) \leq 1
\]
\end{lemma}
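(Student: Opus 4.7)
The plan is to reduce the density on the left neighborhood $U_L(g_k(0), r)$ to a density on an interval of the form $[0, s]$ with $s \in (0, 1)$, and then invoke \Cref{dziobanie} directly.

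First, I observe that if $k > n$ then $g_k(0) = b_{k-1} \leq b_n$, so $U_L(g_k(0), r) \subset [0, b_n]$. Since the limit set $J_n$ is contained in $[b_n, 1]$, and in fact $\min J_n > b_n$, this forces $m_n(U_L(g_k(0), r)) = 0$, and these terms contribute nothing to the supremum. Hence I may restrict to $1 \leq k \leq n$.

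For such $k$, the constraint $r < |g_k(0) - g_{k+1}(0)| = b_{k-1} - b_k$ guarantees that
\[
U_L(g_k(0), r) = [b_{k-1} - r, b_{k-1}] \subset [b_k, b_{k-1}] = g_k([0, 1]).
\]
In the linear setting $g_k$ is the orientation-reversing affine map $g_k(x) = b_{k-1} - (b_{k-1} - b_k)x$, so a direct computation gives
\[
g_k^{-1}(U_L(g_k(0), r)) = \left[0, \tfrac{r}{b_{k-1}-b_k}\right] =: [0, s],
\]
with $s \in (0, 1)$. Applying \Cref{cor: wlasnosc niezmienniczosci mn} to the Borel set $[0, s]$ then yields
\[
\frac{m_n(U_L(g_k(0), r))}{|U_L(g_k(0), r)|^{h_n}} = \frac{m_n([0, s])}{s^{h_n}}.
\]

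Taking the supremum over the admissible parameters $(k, r)$ thus bounds the quantity under study by $\sup_{s \in (0, 1)} m_n([0, s])/s^{h_n}$, whose $\limsup$ as $n \to \infty$ is $\leq 1$ by \Cref{dziobanie}. There is no essential obstacle here: once one observes that $g_k([0,1])$ is itself the interval adjacent to $g_k(0)$ on the left, the lemma is simply \Cref{dziobanie} transported by the affine map $g_k$ via the scaling invariance of $m_n$.
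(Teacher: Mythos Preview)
Your proof is correct and follows essentially the same route as the paper: dispose of $k>n$ trivially, observe that $g_k^{-1}(U_L(g_k(0),r))=[0,\hat r]$ for some $\hat r\in(0,1)$, invoke \Cref{cor: wlasnosc niezmienniczosci mn} to transport the density, and finish with \Cref{dziobanie}. The only difference is cosmetic---you write out the explicit affine formula for $g_k$ and the value $s=r/(b_{k-1}-b_k)$, whereas the paper just names the preimage $\hat r$.
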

\begin{proof}
If $k > n$ then $m_n(U_L(g_k(0), r )) = 0$. Hence we can assume, that $k\leq n$. Since 
\[
g_k^{-1}(U_L(g_k(0), r )) = [0, \hat{r} ]
\]
for $\hat{r} = g_k^{-1}(r)$, from \Cref{cor: wlasnosc niezmienniczosci mn} we have that 
\[
\frac{m_n(U_L(g_k(0), r ))}{{\rm diam }(U_L(g_k(0), r ))^{h_n}} = \frac{m_n([0, \hat{r} ])}{\hat{r}^{h_n}}
\]
and thus using \Cref{dziobanie} ends the proof.
\end{proof}
Now let us introduce the set 
\[
U_R(g_k(1), r ) =[g_k(1), g_k(1) + r],
\]
where $r \in (0, |g_k(0)-g_k(1)|) $. Then the following holds
\begin{lemma}\label{gk do r}
\[
\limsup \limits_{n \to \infty} \left (\sup \left \{ \frac{m_n(U_R(g_k(1), r ))}{{\rm diam }(U_R(g_k(1), r ))^{h_n}}: k\in\mathbb{N} \text{ and } r \in (0,[g_k(0), g_k(0) - g_{k+1}(0)])\right \} \right ) \leq 1
\] 
\end{lemma}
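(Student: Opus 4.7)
The plan is to mirror the strategy of \Cref{r do gk} but with one extra decomposition step. Since $g_k$ is orientation reversing, pulling $U_R(g_k(1),r)=[b_k,b_k+r]$ back through $g_k$ produces an interval of the form $[\hat r,1]$ rather than $[0,\hat r]$, so \Cref{dziobanie} does not apply verbatim, and the main new ingredient will be controlling the density of intervals abutting the right endpoint $1$.

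If $k>n$ the measure is zero, so assume $k\le n$. By \Cref{cor: wlasnosc niezmienniczosci mn} the quantity to control equals $m_n([\hat r,1])/(1-\hat r)^{h_n}$ with $\hat r=g_k^{-1}(b_k+r)\in(0,1)$, so it suffices to prove
\[
\limsup_{n\to\infty}\sup_{\hat r\in(0,1)}\frac{m_n([\hat r,1])}{(1-\hat r)^{h_n}}\le 1.
\]
Fixing $\varepsilon>0$, I would split on the location of $\hat r$. If $\hat r\le b_n$, then $m_n([\hat r,1])=m_n([b_n,1])=1$ and $(1-\hat r)^{h_n}\ge(1-b_n)^{h_n}\to 1$, which handles this case. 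Otherwise $\hat r\in[b_j,b_{j-1}]$ for some $1\le j\le n$.

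If $j=1$, then $[\hat r,1]\subset g_1([0,1])$, and a single pullback by $g_1$ combined with \Cref{dziobanie} gives the bound. If $j\ge 2$, I would split $[\hat r,1]=[\hat r,b_{j-1}]\cup[b_{j-1},1]$: the first piece lies in $g_j([0,1])$, so pulling back through $g_j$ and applying \Cref{dziobanie} gives $m_n([\hat r,b_{j-1}])\le(1+\varepsilon)(b_{j-1}-\hat r)^{h_n}$; the second piece equals $[b_{j-1},b_0]$, which is of the form treated by \Cref{l do l+q} with $p=0$ and $q=j-1$, giving $m_n([b_{j-1},1])\le(1+\varepsilon)(1-b_{j-1})^{h_n}$.

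Setting $u=b_{j-1}-\hat r$ and $v=1-b_{j-1}$ so that $u+v=1-\hat r$, these two bounds combine to
\[
\frac{m_n([\hat r,1])}{(1-\hat r)^{h_n}}\le(1+\varepsilon)\,\frac{u^{h_n}+v^{h_n}}{(u+v)^{h_n}},
\]
and \Cref{lem: oszacowanie na sume wi} applied to the weights $u/(u+v),\,v/(u+v)$ bounds the remaining ratio by $2^{1-h_n}\to 1$. I do not anticipate a serious obstacle: everything rests on the ``one cylinder plus one gap'' decomposition, which lets \Cref{dziobanie} and \Cref{l do l+q} each handle exactly one piece, with \Cref{lem: oszacowanie na sume wi} gluing them together.
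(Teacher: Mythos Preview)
Your proposal is correct and follows essentially the same route as the paper: pull back through $g_k$ to reduce to intervals $[\hat r,1]$, treat the case $\hat r\in[b_1,1]$ by one further pullback through $g_1$ to land on an interval $[0,s]$ handled by \Cref{dziobanie}, and for $\hat r\in[b_j,b_{j-1}]$ with $j\ge2$ split at $b_{j-1}$, apply \Cref{dziobanie} (after a pullback through $g_j$) to the left piece and \Cref{l do l+q} to the right piece, then glue via the $2^{1-h_n}$ bound from \Cref{lem: oszacowanie na sume wi}. The only cosmetic differences are that the paper routes the pullback steps through \Cref{r do gk} rather than invoking \Cref{dziobanie} directly, and that you add an explicit (harmless) discussion of the case $\hat r\le b_n$.
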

\begin{proof}
If $k > n$ then $m_n(U_R(g_k(0), r )) = 0$. Hence we can assume that $k\leq n$. Set $\varepsilon>0$.
Since 
\[
g_k^{-1}(U_R(g_k(0), r )) = [\hat{r}, 1 ]
\]
for $\hat{r} = g_k^{-1}(r)$, we have that 
\[
\frac{m_n(U_R(g_k(1), r ))}{{\rm diam }(U_R(g_k(1), r ))^{h_n}} = \frac{m_n([\hat{r}, 1 ])}{(1-\hat{r})^{h_n}}.
\]
There exists $p \in \mathbb{N}$ such that $\hat{r}\in (g_p(1), g_p(0)]$. If $p = 1$ then $[\hat{r}, 1 ] = U_L(g_1(0), 1-\hat{r})$, because $g_1(0) = 1$ and by \Cref{r do gk} we end the proof. 
\\
Let us assume from now on that $p \geq 2$. Then the interval $[\hat{r}, 1 ]$ splits into the following two parts
\[
[\hat{r}, 1 ] = [\hat{r}, g_q(0)]\cup [g_q(0), 1].
\]
 We know that $ [\hat{r}, g_q(0)] = U_L(g_q(0), g_q(0)-\hat{r})$ and thus by  \Cref{r do gk} we know that 
\[
\limsup \limits_{n \to \infty} \left (\sup \left \{ \frac{m_n([\hat{r}, g_q(0)])}{{\rm diam }([\hat{r}, g_q(0)])^{h_n}}: p \in \mathbb{N}, \hat{r} \in [g_p(1),g_p(0)]\right \} \right ) \leq  1,
\]
whereas for the interval $[g_q(0), 1]$, using \Cref{l do l+q} with $l = 0$ and thus $l+q = q$, we get that 
\[
\limsup \limits_{n \to \infty} \left (\sup \limits_{q \in \mathbb{N}} \left \{ \frac{m_n([g_q(0), 1])}{{\rm diam }([g_q(0), 1])^{h_n}}\right \} \right ) \leq  1.
\]
Now from this inequality we know that there exists $N_\varepsilon \in \mathbb{N}$ such that for all $n \geq N_\varepsilon$, and all $q \leq n$ and all $\hat{r}\in [0,|g_k(0) - g_{k}(1)|]$
\begin{equation}\label{jedna gwiazdka}
\frac{m_n([\hat{r}, g_q(0)])}{{\rm diam }([\hat{r}, g_q(0)])^{h_n}} \leq 1+\varepsilon 
\end{equation}
and
\begin{equation}\label{dwie gwiazdki}  
\frac{m_n([g_q(0), 1])}{{\rm diam }([g_q(0), 1])^{h_n}} \leq 1+\varepsilon
\end{equation}
for $n\geq N_\varepsilon$.
Set $\Delta = [\hat{r}, 1 ]$, $w_1 = \frac{|[\hat{r}, g_q(0)]|}{|\Delta|} $, $w_2 = \frac{|[g_q(0), 1]|}{|\Delta|}$. Then $w_1 + w_2 = 1$ and using \Cref{lem: oszacowanie na sume wi} we obtain 
\[
w_1^{h_n} + w_2^{h_n} \leq 2^{1-h_n} ,
\]
\[
|[\hat{r}, g_q(0)]|^{h_n} +|[g_q(0), 1]|^{h_n} \leq 2^{1-h_n} |\Delta|^{h_n}.
\]
Thus, for each $\varepsilon > 0$ there exists $n_0$, such that for every $n > n_0$ 
\[
\frac{m_n(\Delta)}{|\Delta|^{h_n}} \leq 2^{1-h_n} \frac{m_n([\hat{r}, g_q(0)]) + m_n([g_q(0), 1]) }{|[\hat{r}, g_q(0)]|^{h_n} +|[g_q(0), 1]|^{h_n} }\leq 
\]
\[
\leq 2^{1-h_n} \max \left \{ \frac{m_n([\hat{r}, g_q(0)])}{|[\hat{r}, g_q(0)]|^{h_n}}, \frac{m_n([g_q(0), 1])}{|[g_q(0), 1]|^{h_n}}\right \} \leq 2^{1-h_n} (1+ \varepsilon)
\]
and $h_n \to 1$, when $n \to \infty$, so the whole expression is arbitrarily close to 1, which ends the proof.
\end{proof}
Finally, we are ready to prove the main technical Theorem of this part of the paper.
\begin{thm}
\[
\limsup \limits_{n \to \infty} \left (\sup \left \{ \frac{m_n([s,t])}{([s,t])^{h_n}}: 0\leq s < t \leq 1, (s,t) \cap J_n \neq \emptyset \right \} \right ) \leq 1
\] 
\end{thm}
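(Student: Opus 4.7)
My plan is to reduce the density of an arbitrary interval $[s,t]$ to the four shapes already controlled — $[0,r]$, $[b_{p+q},b_p]$, $U_L(g_k(0),r)$, and $U_R(g_k(1),r)$ — and then combine the pieces via Jensen's inequality. I will fix $\varepsilon>0$ and, by \Cref{dziobanie}, \Cref{l do l+q}, \Cref{r do gk}, and \Cref{gk do r}, take $n$ large enough that every interval of those four shapes has density at most $1+\varepsilon$ and that $3^{1-h_n}\leq 1+\varepsilon$. Given $[s,t]\subset[0,1]$ with $(s,t)\cap J_n\neq\emptyset$, I will let $\omega$ be the longest finite word over $\{1,\dots,n\}$ with $[s,t]\subseteq g_\omega([0,1])$; such a word exists because $\diam(g_\omega([0,1]))\to 0$ as $|\omega|\to\infty$. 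Iterating \Cref{cor: wlasnosc niezmienniczosci mn} along $\omega$ preserves the ratio $m_n(\cdot)/|\cdot|^{h_n}$, so it will suffice to bound the density of $[\hat s,\hat t]:=g_\omega^{-1}([s,t])\subseteq[0,1]$, which by maximality of $\omega$ is not contained in any first-level cylinder $[b_j,b_{j-1}]$, $1\leq j\leq n$.

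Next, since the cylinders $\{[b_j,b_{j-1}]\}_{j=1}^{n}$ tile $[b_n,1]\supset J_n$ and the complement $[0,b_n]$ carries no $m_n$-mass, I would replace $\hat s$ by $\max(\hat s,b_n)$ (which only raises the density, the measure being unchanged while the diameter shrinks) and assume $\hat s\in[b_p,b_{p-1}]$, $\hat t\in[b_q,b_{q-1}]$ for some $1\leq q<p\leq n$. Then I will decompose
\[
[\hat s,\hat t]\;=\;\underbrace{[\hat s,b_{p-1}]}_{U_L(g_p(0),\,b_{p-1}-\hat s)}\;\cup\;\underbrace{[b_{p-1},b_q]}_{\text{middle; empty if }p-1=q}\;\cup\;\underbrace{[b_q,\hat t]}_{U_R(g_q(1),\,\hat t-b_q)},
\]
where the middle piece, when non-empty, is of the form covered by \Cref{l do l+q}. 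Each of the (at most three) non-degenerate pieces $I_i$ then has density at most $1+\varepsilon$ by the choice of $n$.

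To combine, I will apply \Cref{lem: oszacowanie na sume wi}: setting $w_i:=|I_i|/|[\hat s,\hat t]|$ so that $\sum_i w_i=1$, the lemma gives $\sum_i|I_i|^{h_n}\leq 3^{1-h_n}|[\hat s,\hat t]|^{h_n}$, and therefore
\[
\frac{m_n([\hat s,\hat t])}{|[\hat s,\hat t]|^{h_n}}\;=\;\frac{\sum_i m_n(I_i)}{|[\hat s,\hat t]|^{h_n}}\;\leq\;(1+\varepsilon)\,\frac{\sum_i|I_i|^{h_n}}{|[\hat s,\hat t]|^{h_n}}\;\leq\;(1+\varepsilon)\,3^{1-h_n}\;\leq\;(1+\varepsilon)^{2}\;\leq\;1+3\varepsilon.
\]
The main obstacle will be the case analysis for the decomposition — in particular the degenerate situations in which $[\hat s,\hat t]$ extends into $[0,b_n]$, the middle piece collapses ($p-1=q$), or $[\hat s,\hat t]$ lands inside a single first-level cylinder after trimming (reducing to just one or two standard pieces) — and checking that in each such case the same Jensen-type combination still produces the bound above.
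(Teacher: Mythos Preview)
Your proposal is correct and follows essentially the same approach as the paper: pull back by (iterated) maps $g_k$ using \Cref{cor: wlasnosc niezmienniczosci mn} until the interval is no longer contained in a single first-level cylinder, then split into at most three pieces of the shapes handled by \Cref{r do gk}, \Cref{l do l+q}, and \Cref{gk do r}, and combine via \Cref{lem: oszacowanie na sume wi} to absorb the factor $3^{1-h_n}$. The only cosmetic differences are that the paper phrases the pullback as ``repeat applying $g_k^{-1}$ until some $b_k$ lies in the interval'' rather than ``choose the longest $\omega$'', and handles the degenerate two-piece case explicitly rather than as a special case of the three-piece decomposition; your explicit trimming $\hat s\mapsto\max(\hat s,b_n)$ is a clean way to dispose of the tail $[0,b_n]$ that the paper leaves implicit.
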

\begin{proof}
Fix $\varepsilon>0$. 
Assume that there exists no $k \in \{1,2, \dots, n \}$ such that $s < g_k(0)< t$. This implies that there exists $k$ such that $[s,t] \subseteq [g_k(1), g_k(0)]$. Then, using \Cref{cor: wlasnosc niezmienniczosci mn}, we obtain
\[
\frac{m_n([s,t])}{([s,t])^{h_n}} = \frac{m_n([g_k^{-1}(s),g_k^{-1}(t)])}{([g_k^{-1}(s),g_k^{-1}(t)])^{h_n}}.
\]
Then either there exists $l \in \{1,2, \dots, n \}$ such that  $g_k^{-1}(s) < g_l(0)< g_k^{-1}(t)$ or we can repeat this operation until such $l$ exists.
So from now on we will assume that there exists $k \in  \{1,2, \dots, n \}$ such that $s < g_k(0)< t$. Now either one of those two cases is true
\begin{enumerate}
    \item[(1)] at least one of the points $g_{k+1}(0)$ or $g_{k-1}(0)$ is contained in interval $[s,t]$,
    \item[(2)] $g_{k+1}(0) \not\in [s,t]$ and $g_{k-1}(0) \not\in [s,t]$.
\end{enumerate} 
Let us focus on the first case scenario. There exists minimal $l \in  \{1,2, \dots, n \}$ and maximal $q \in \{1,2, \dots, n \}$ such that $s\leq b_{l+q} < b_l \leq t$. We can divide interval $[s,t]$ into three pieces
\[
[s,t] = [s,b_{l+q})\cup [b_{l+q},b_l) \cup [b_l,t].
\]
Now recalling \Cref{r do gk}, \ref{gk do r} and \Cref{l do l+q} we know that there exists $N_\varepsilon \in \mathbb{N}$ such that for all $l, q$, all $s\in [b_{l+q+1},b_{l+q}]$ and all $t \in [b_q, b_{q-1}]$
\begin{equation}
\frac{m_n([s,b_{l+q}))}{{\rm diam }([s,b_{l+q}))^{h_n}} \leq 1+\varepsilon ,
\end{equation}
\begin{equation}\label{doklejony z lewej}
\frac{m_n([b_{l+q},b_l))}{{\rm diam }([b_{l+q},b_l))^{h_n}} \leq 1+\varepsilon 
\end{equation}
and 
\begin{equation}
\frac{m_n([b_l,t])}{{\rm diam }([b_l,t])^{h_n}} \leq 1+\varepsilon 
\end{equation}
for $n\geq N_\varepsilon$. Now using similar reasoning to the proof of \Cref{gk do r}, with the only difference being three components instead of two, we have that 
\[
\frac{m_n([s,t])}{([s,t])^{h_n}} \leq 3^{1-h_n}(1+\varepsilon),
\]
which ends the proof for the first case.
\\
For the case when  $g_{k+1}(0) \not\in [s,t]$ and $g_{k-1}(0) \not\in [s,t]$, we set $[b_{l+q},b_l) = \emptyset$ and then we split our interval into two parts $[s,b_k)$ and $[b_k,t]$. Now, invoking \Cref{r do gk} and \ref{gk do r}, we see that 
\[
\frac{m_n([s,b_{k}))}{{\rm diam }([s,b_{k}))^{h_n}} \leq 1+\varepsilon ,
\]
\[
\frac{m_n([b_k,t])}{{\rm diam }([b_k,t])^{h_n}} \leq 1+\varepsilon .
\]
Now, using the same reasoning as in \Cref{gk do r}, we get
\[
\frac{m_n([s,t])}{([s,t])^{h_n}} \leq 2^{1-h_n}(1+\varepsilon),
\]
ending the proof.
\end{proof}
This theorem together with \Cref{gestosc lepsza} and \Cref{gora} now gives.
\begin{thm}\label{razem}
Let $S_n$ be the iterated function system defined in Definition~\ref{IFS Sn} fulfilling the following two conditions 
\begin{enumerate}
\item[(C1)] 
\[
\lim\limits_{n \to \infty} \left(1-h_n\right) \ln{n} = 0 ,
\]
where $h_n$ is the Hausdorff dimension of the limit set of the IFS $S_n$ generated by initial $n$ contractions $g_k$, $k = 1, \dots, n$, and 
\item[(C2)]
\[
\sup \limits_{k\in \mathbb{N}} \left \{\frac{b_k-b_{k+1}}{b_{k+1}} \right \} < \infty.
\]
\end{enumerate}
Then
\[
\lim\limits_{n\to \infty} H_{h_n}(J_n) = 1
\]
where $J_n$ is the limit set of the IFS $S_n$ and $H_h$ denotes Hausdorff measure in Hausdorff dimension $h$.
\end{thm}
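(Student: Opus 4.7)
The plan is to obtain \Cref{razem} as a formal combination of three results already in place: the upper bound \Cref{gora}, the density characterization \Cref{gestosc lepsza}, and the sharp density estimate proved immediately before the theorem statement. Since \Cref{gora} yields $\limsup_{n\to\infty} H_{h_n}(J_n) \leq 1$ without invoking (C1) or (C2), the only remaining task is the matching lower bound $\liminf_{n\to\infty} H_{h_n}(J_n) \geq 1$, and this is where the two conditions on the IFS will enter.

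For the lower bound, I apply \Cref{gestosc lepsza}: since each $J_n$ is the limit set of a linear IFS satisfying the Open Set Condition with $0 < H_{h_n}(J_n) < \infty$, one may write
\[
H_{h_n}(J_n) = \inf_F \frac{({\rm diam}\,F)^{h_n}}{m_n(F)},
\]
where the infimum runs over closed intervals $F \subset [0,1]$ (intervals disjoint from $J_n$ may be excluded). Recasting the right-hand side in terms of the density $d_n(F) = m_n(F)/({\rm diam}\,F)^{h_n}$ from \Cref{density} gives the compact reformulation
\[
H_{h_n}(J_n) = \frac{1}{\sup_F d_n(F)}.
\]

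The theorem immediately preceding \Cref{razem} furnishes exactly the input needed: under (C1) and (C2),
\[
\limsup_{n\to\infty} \sup\{d_n([s,t]) : 0 \leq s < t \leq 1,\ (s,t) \cap J_n \neq \emptyset\} \leq 1.
\]
Hence for every $\varepsilon > 0$ and all sufficiently large $n$ we have $\sup_F d_n(F) \leq 1 + \varepsilon$, and therefore $H_{h_n}(J_n) \geq 1/(1+\varepsilon)$. Letting $\varepsilon \to 0$ yields $\liminf_{n\to\infty} H_{h_n}(J_n) \geq 1$, which combined with \Cref{gora} gives the claimed equality.

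There is no substantial obstacle at this final synthesis stage: all the analytic difficulty has already been absorbed into \Cref{dziobanie}, \Cref{l do l+q}, \Cref{r do gk}, \Cref{gk do r} and the preceding aggregation theorem. The hardest genuinely new step is rather the transition between the density reformulation and the Hausdorff measure, which is handled cleanly by \Cref{gestosc lepsza}; beyond that, \Cref{razem} is essentially a bookkeeping statement assembling the earlier pieces.
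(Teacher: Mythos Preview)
Your proposal is correct and matches the paper's approach exactly: the paper itself presents \Cref{razem} as an immediate consequence of the preceding density theorem together with \Cref{gestosc lepsza} and \Cref{gora}, and you have spelled out that synthesis precisely. There is nothing to add.
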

\section{Examples}\label{sec: examples} 
In this section we will show the families of iterated function systems for which the assumptions (1) and (2) of \Cref{razem} hold. The condition \eqref{warunek3} is easy to check. However, condition \eqref{warunek2} is the more demanding one. To check the \eqref{warunek2} we need only rough estimate on $1-h_n$. In many cases, we can get exact asymptotics of $h_n$, but we do not need it for this application.
\par The estimate for Hausdorff dimension can be deduced from Theorem 6.2.3 from \cite{Mauldin_Urbanski_2003}, which will also appear in Part II \Cref{section: nonlinear examples}. Below we formulate this theorem for iterated function systems consisting of linear functions, together with its proof based on \cite{Mauldin_Urbanski_2003}. The proof in the linear case is very transparent.
\begin{thm}\label{thm: 623 mauldin urbanski wymiar}
Let $S_n$ be a linear IFS with $|g_k'| = b_{k-1} - b_{k} : = a_k$. Suppose there exists $t_0 < 1$ such that $\sum \limits_{k=1}^\infty a_k^t < \infty$ for all $t \geq t_0$. Then
\[
1-h_n \leq \frac{\sum\limits_{k = n+1}^\infty |a_k|^t}{\sum\limits_{k = 1}^\infty (\ln \frac{1}{a_k})|a_k|}.
\]
for any $t_0\leq t \leq h_n$ where $h_n$ is the Hausdorff dimension of the limit set of $S_n$.
\end{thm}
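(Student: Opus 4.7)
The plan is to exploit that in the linear case, Bowen's dimension equation takes the elementary form $\sum_{k=1}^n a_k^{h_n}=1$ (the OSC consequence recorded just after \Cref{defi: open set condition}), while for the \emph{full} (infinite) system the analogous equation is solved by $t=1$, since $\sum_{k=1}^\infty a_k = b_0 - \lim_{k} b_k = 1$. Comparing these two identities is the whole game: the defect $1-h_n$ should be controlled by how much tail mass $\sum_{k>n}a_k^{h_n}$ is discarded when truncating, normalized by the slope of a pressure-type function.

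To make this quantitative, I would introduce $\phi(t):=\sum_{k=1}^\infty a_k^t$ on $[t_0,\infty)$. The summability hypothesis together with the standard termwise-differentiation argument gives $\phi\in C^1((t_0,\infty))$ with $\phi'(t)=-\sum_{k=1}^\infty a_k^t\ln(1/a_k)$. Subtracting the two identities above yields
\[
\phi(h_n)-\phi(1)=\Big(\sum_{k=1}^n a_k^{h_n}+\sum_{k=n+1}^\infty a_k^{h_n}\Big)-\sum_{k=1}^\infty a_k=\sum_{k=n+1}^\infty a_k^{h_n},
\]
and the Mean Value Theorem produces some $\eta\in(h_n,1)$ with
\[
\sum_{k=n+1}^\infty a_k^{h_n}=\phi'(\eta)(h_n-1)=-(1-h_n)\sum_{k=1}^\infty a_k^\eta\ln(1/a_k).
\]

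The remainder is soft monotonicity of $t\mapsto a_k^t$ on $a_k\in(0,1)$. Since $\eta<1$, we have $a_k^\eta\ge a_k$, hence $\sum_{k=1}^\infty a_k^\eta\ln(1/a_k)\ge \sum_{k=1}^\infty a_k\ln(1/a_k)$; and since $t\le h_n$, we have $a_k^{h_n}\le a_k^t$ for every $k$. Combining,
\[
1-h_n=\frac{\sum_{k=n+1}^\infty a_k^{h_n}}{\sum_{k=1}^\infty a_k^\eta\ln(1/a_k)}\;\le\;\frac{\sum_{k=n+1}^\infty a_k^t}{\sum_{k=1}^\infty a_k\ln(1/a_k)},
\]
which is the stated inequality.

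The only thing requiring attention is verifying that the series in sight are finite on the right ranges: $\sum a_k^t<\infty$ for $t\in[t_0,1]$ follows directly from the hypothesis, and $\sum a_k\ln(1/a_k)<\infty$ via the trivial comparison $a_k\ln(1/a_k)\le C_\varepsilon a_k^{1-\varepsilon}$ for any fixed $\varepsilon\in(0,1-t_0)$. Note also that the statement is vacuous when $h_n<t_0$, so the MVT is applied on $[h_n,1]\subset[t_0,1]$ as needed. I do not anticipate any substantive obstacle in the linear setting; the delicate part of the paper will be the nonlinear analogue in \Cref{section: nonlinear examples}, where one would expect distortion control to replace the direct termwise computations above.
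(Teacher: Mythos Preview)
Your argument is correct and follows essentially the same route as the paper: compare the truncated Bowen equation $\sum_{k\le n}a_k^{h_n}=1$ with the full identity $\sum_k a_k=1$ through the function $\phi(t)=\sum_k a_k^t$, and control $1-h_n$ by the tail divided by the derivative at $1$. The only cosmetic difference is that the paper works with the logarithm $P(t)=\ln\phi(t)$, proves its convexity via H\"older, and integrates $(e^{P})'$ over $[h_n,1]$ using $-P'(t)\ge -P'(1)$ and $e^{P(t)}\ge 1$; your direct use of the Mean Value Theorem on $\phi$ together with the termwise monotonicity $a_k^\eta\ge a_k$ for $\eta<1$ reaches the same inequality without the detour through convexity, so your write-up is in fact slightly leaner.
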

\begin{proof}
Put $P(t) = \ln \left ( \sum\limits_{k=1}^\infty a_k^t\right)= \ln \left ( \sum\limits_{k=1}^\infty |g_k'|^t\right)$, $t \in \mathcal{D}$. The domain $\mathcal{D} = \{t: \sum\limits_{k=1}^{\infty} a_k^t < \infty \}$. This is an interval $[t_0,\infty)$ (or $(t_0, \infty)$), and we assume that $t_0 < 1$.
\par Observe that the function $P(t)$ is convex in its domain $\mathcal{D}$. Indeed, let $a \in [0,1]$. Then 
\[
P(at+(1-a)s)  = \ln \left( \sum_{k = 1}^\infty a_k^{at + (1-a)s} \right) \leq aP(t) + (1-a) P(s).
\]
This follows from H\"older inequality
\[
\sum x_i y_i \leq \left( \sum x_i^p \right)^{\frac{1}{p}} \left(\sum y_i^q \right)^{\frac{1}{q}}
\]
where $x_i = a_k^{at}$, $y_i = a_k^{(1-as)}$, $p = \frac{1}{a}$ and $q = \frac{1}{1-a}$.
\par The function $P(t)$ is differentiable in $(t_0,\infty)$. The derivative is given by the formula
\[
\frac{d}{dt} P(t) = \frac{\frac{d}{dt}\sum\limits_{k=1}^\infty a_k^t}{\sum\limits_{k=1}^\infty a_k^t} = \frac{\sum\limits_{k=1}^\infty a_k^t \ln (a_k) }{\sum\limits_{k=1}^\infty a_k^t}.
\]
Indeed, it follows from the fact that (due to the assumption $t_0 <1$) the series $\sum\limits_{k=1}^\infty a_k^t \ln (a_k)$ is locally uniformly convergent in $(t_0, \infty)$. Since the function $P(t)$ is convex, the derivative $P'(t)$ is increasing.
$P'(t) = \frac{d}{dt} P(t)$ is negative for $t <1$ and 
\[
P'(t) \leq P'(1) = \frac{\sum\limits_{k=1}^\infty a_k \ln (a_k) }{\sum\limits_{k=1}^\infty a_k} = - \int\ln(|f'|) d\lambda
\]
where $\lambda$ is the Lebesgue measure.
\par Now, we can also consider the function $P_n(t) := \ln \left( \sum\limits_{k=1}^{n} a_k^t  \right)$. Of course $P_n(h_n) = 0$ and $P(1) = 0 $. Thus
\[
e^{P(h_n)} - 1 = e^{P(h_n)} - e^{P(1)} = \int\limits_1^{h_n} \left( e^{P(t)} \right )' dt =   \int\limits_{h_n}^{1} -P'(t) e^{P(t)}  dt \overset{(*)}{\geq} -P'(1)  \int\limits_{h_n}^{1}  e^{P(t)}  dt \overset{(**)}{\geq} -P'(1)(1-h_n)
\]
The inequalities $(*)$ and $(**)$ are consequences of two facts:
\begin{enumerate}
    \item $-P'(t) \geq P'(1)$ because of convexivity of $P(t)$
    \item $e^{P(t)} \geq 1$ since $\sum\limits_{k=1}^\infty a_k^t \geq 1$ for all $t \in (t_0, 1)$.
\end{enumerate}
On the other hand
\[
e^{P(h_n)} - 1 = e^{P(h_n)} - e^{P_n(h_n)} = \sum\limits_{k=n+1}^\infty a_k^{h_n} \leq  \sum\limits_{k=n+1}^\infty a_k^{t} .
\]
for every $t \leq h_n$. Thus
\[
1-h_n \leq \frac{-1}{P'(1)} \cdot  \sum\limits_{k=n+1}^\infty a_k^{t}  = \frac{1}{\int\ln|f'| d\lambda}\cdot \sum\limits_{k=n+1}^\infty a_k^{t} 
\]
for any $t \leq h_n$. Hence
\[
1-h_n \leq \frac{\sum\limits_{k = n+1}^\infty |a_k|^t}{\sum\limits_{k =1}^\infty (\ln \frac{1}{a_k})|a_k|}
\]
for any $t \leq h_n$, which concludes the proof.
\end{proof}
This theorem can be applied for example for $b_k = \frac{1}{(k+1)^\alpha}$, $\alpha > 0$ and $ b_k = q^k$, $q \in (0,1)$ to verify condition \eqref{warunek2}.
\begin{lemma}
For $b_k = q^k$, $0<q<1$, as well as $b_k = \frac{1}{(k+1)^\alpha}$, $\alpha > 0$ the assumptions \eqref{warunek2} and \eqref{warunek3} hold an this \Cref{razem} applies.
\end{lemma}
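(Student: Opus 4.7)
The plan is to verify the two conditions \eqref{warunek2} and \eqref{warunek3} separately for each of the two families of sequences, using \Cref{thm: 623 mauldin urbanski wymiar} to control $1-h_n$ for \eqref{warunek2}. Condition \eqref{warunek3} is a direct computation; the real work is the upper bound on $1-h_n$ coming from the series $\sum a_k^t$ with a carefully chosen exponent $t<1$.

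First I would handle the geometric case $b_k=q^k$ with $0<q<1$. Here $a_k = b_{k-1}-b_k = q^{k-1}(1-q)$ and
\[
\frac{b_k-b_{k+1}}{b_{k+1}} = \frac{1-q}{q}
\]
is constant in $k$, so \eqref{warunek3} is immediate. For \eqref{warunek2}, fix any $t\in(0,1)$; then $\sum_{k=1}^\infty a_k^t$ converges (so $t_0<1$ in the notation of \Cref{thm: 623 mauldin urbanski wymiar}), the denominator
$\sum_{k=1}^\infty (\ln \tfrac{1}{a_k})\,a_k$
is a finite positive constant $D(q)$, and
\[
\sum_{k=n+1}^\infty a_k^t = (1-q)^t \frac{q^{nt}}{1-q^t}.
\]
Thus $1-h_n \leq C(q,t)\, q^{nt}$ for all $n$ large enough that $h_n\geq t$, which eventually holds since $h_n\to 1$. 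Exponential decay of $1-h_n$ clearly dominates $\ln n$, giving $(1-h_n)\ln n \to 0$.

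Next I would handle the polynomial case $b_k=(k+1)^{-\alpha}$ with $\alpha>0$. For \eqref{warunek3}, note that
\[
\frac{b_k-b_{k+1}}{b_{k+1}} = \Bigl(\frac{k+2}{k+1}\Bigr)^{\!\alpha}-1,
\]
a decreasing function of $k$ bounded above by $2^\alpha-1<\infty$. For \eqref{warunek2}, by the mean value theorem $a_k = k^{-\alpha}-(k+1)^{-\alpha}\asymp \alpha\, k^{-\alpha-1}$, so $\sum_k a_k^t<\infty$ precisely for $t>t_0:=\tfrac{1}{\alpha+1}$. Since $t_0<1$, fix some $t\in(t_0,1)$; then
\[
\sum_{k=n+1}^\infty a_k^t \leq C(\alpha,t)\sum_{k=n+1}^\infty k^{-(\alpha+1)t} \leq C'(\alpha,t)\, n^{1-(\alpha+1)t},
\]
with $1-(\alpha+1)t<0$. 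For $n$ large enough that $h_n\geq t$, \Cref{thm: 623 mauldin urbanski wymiar} yields $1-h_n \leq C''\, n^{1-(\alpha+1)t}$, so $(1-h_n)\ln n\to 0$ because the polynomial decay beats $\ln n$. Combining the two cases and invoking \Cref{razem} finishes the proof.

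The only subtlety I anticipate is the requirement $t\leq h_n$ in \Cref{thm: 623 mauldin urbanski wymiar}, which is not automatic for small $n$; however, since $h_n\to 1$ and our chosen $t$ is strictly less than $1$, the estimate applies for all sufficiently large $n$, which is enough for the asymptotic statement.
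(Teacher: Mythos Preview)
Your proposal is correct and follows essentially the same approach as the paper: verify \eqref{warunek3} by a direct computation, and verify \eqref{warunek2} by applying \Cref{thm: 623 mauldin urbanski wymiar} with a fixed exponent $t<1$ (any $t$ in the geometric case, any $t\in(\tfrac{1}{\alpha+1},1)$ in the polynomial case) to bound the tail $\sum_{k>n}a_k^t$ and hence $1-h_n$. Your remark that the constraint $t\le h_n$ is only needed for large $n$ is exactly how the paper uses the theorem as well.
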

\begin{proof}
In both cases, condition \eqref{warunek3} is straightforward to check. We will focus on showing that the assumption \eqref{warunek2} holds. 
\par Case 1. First, set $b_k = q^k$, $q \in (0,1)$. Remember that $a_k = b_{k-1} - b_k = q^{k-1}(1-q)$. Set $0<t \leq h_n$. Then, by \Cref{thm: 623 mauldin urbanski wymiar} we have
\[
1-h_n  \leq \frac{\sum\limits_{k = n+1}^\infty |a_k|^t}{\sum\limits_{k = 1}^\infty (\ln \frac{1}{a_k})|a_k|} = \frac{\sum\limits_{k = n+1}^\infty \left(q^{k-1}(1-q)\right)^t}{\frac{-q}{1-q}\ln(q)\ln(1-q)} = c\cdot \left(\frac{1-q}{q}\right)^t \cdot \sum\limits_{k = n+1}^\infty q^{tk} = 
\]
\[
= c\cdot \left(\frac{1-q}{q}\right)^t \cdot\frac{q^{t(n+1)}}{1-q^t}
\]
for some constant $c>0$. Hence
\[
\lim \limits_{n \to \infty} (1-h_n) \ln n \leq \lim \limits_{n \to \infty} q^{t(n+1)} \cdot\ln (n) \cdot  c \cdot\frac{1}{q^t} = 0,
\]
which ends the proof of Case 1.
\par Case 2. Set $b_k = \frac{1}{(k+1)^\alpha}$, $\alpha > 0$. Then, by the mean value theorem there exists $ \xi \in (k, k+1)$ such that
\[
a_k = b_{k-1} - b_k = \frac{1}{(k)^\alpha} - \frac{1}{(k+1)^\alpha} = -(k+1-k) \cdot \frac{-\alpha}{\xi^{\alpha+1}} \leq \frac{\alpha}{k^{\alpha+1}}
\] 
and hence $\frac{\alpha}{k^{\alpha+1}}\geq a_k \geq \frac{\alpha}{(k+1)^{\alpha+1}}$.
Set $t < 1$ such that $t (\alpha +1) >1$. For sufficiently large $n$, we have $t < h_n$. Now, using \Cref{thm: 623 mauldin urbanski wymiar} we get
\[
1-h_n  \leq \frac{\sum\limits_{k = n+1}^\infty |a_k|^t}{\sum\limits_{k =1}^\infty (\ln \frac{1}{a_k})|a_k|} \leq \frac{\sum\limits_{k = n+1}^\infty \left(\frac{\alpha}{k^{\alpha+1}}\right)^t}{\sum\limits_{k=1}^{\infty}\left[((\alpha+1)\ln(k) - \alpha)\frac{\alpha}{(k+1)^{\alpha+1}}\right]} = d\alpha^t \sum\limits_{k = n+1}^\infty \frac{1}{k^{t(\alpha+1)}} \overset{(*)}{\leq} d\alpha^t \int \limits_{n}^\infty \frac{1}{x^{t(\alpha+1)}} dx = 
\]
\[
= d\alpha^t \frac{1}{n^{t(\alpha+1)  -1}} \frac{1}{t(\alpha+1)-1}.
\]
 for some constant $d =  \left[\sum\limits_{k=1}^{\infty}\left[((\alpha+1)\ln(k) - \alpha)\frac{\alpha}{(k+1)^{\alpha+1}}\right]\right]^{-1}>0$. Inequality $(*)$ holds because 
 \[
\int_{k-1}^{k} x^{-t(\alpha+1)}dx \;\ge\; \text{(width)}\cdot\min_{[k-1,k]}x^{-t(\alpha+1)}= 1\cdot k^{-t(\alpha+1)}=\frac{1}{k^{t(\alpha+1)}}.
\]
Now because $t(\alpha+1) -1 > 0 $, we get
\[
\lim \limits_{n \to \infty} (1-h_n) \ln(n) \leq  \lim \limits_{n \to \infty} \frac{\ln(n)}{n^{t(\alpha-1)  -1}} \cdot   \frac{d\alpha^t}{t(\alpha+1)-1} = 0 
\]
which concludes the proof.
\end{proof}

\part{Nonlinear systems}
In this part, we will derive similar estimate for the Hausdorff measure as in previous one, but in more general case - we do not assume $f_k$ to be linear. Now, we assume that for each $k \in \mathbb{N}$ $f_k(x):[b_k, b_{k-1}] \to [0,1]$ is a decreasing continuous function, such that 
\[
f_k(b_{k}) = 1  \text{ and } f_k(b_{k-1}) = 0
\]
where as in Part 1, $(b_k)_{k=1}^{\infty}$ is a decreasing sequence with $b_0=1$ and $\lim\limits_{k\to \infty} b_k = 0$.
\section{Estimate in nonlinear case}\label{section: non linear case}
Let $g_k$ denote the inverse map $f_k^{-1}$. We assume that $g_k:[0,1]\to g_k([0,1])$ is a diffeomorphism of a $C^2$ class, which can be extended to some neighborhood of the interval $[0,1]$.
\begin{dfn}\label{def: distortion}
If $g:\Delta_1 \to \Delta_2$ is a diffeomorphism, we define
\[
\kappa(g)|_{\Delta_1} := \sup \left \{ \frac{|g'(y)|}{|g'(x)| }: x, y \in \Delta_1 \right \}
\]
and call this number the distortion of the map $g: \Delta_1 \to \Delta_2$
\end{dfn}
\textbf{Assumptions.} In this section, we assume the following five conditions. First, the same condition as in the linear case
\begin{equation}\tag{\ref{warunek dodatkowy 3}}
    \lim\limits_{n \to \infty} \left(1-h_n\right) \ln{n} = 0.
\end{equation}
Second, a stronger version of condition \eqref{warunek3}.
\begin{equation}\tag{\ref{warunek dodatkowy 2}}
\lim \limits_{n \to \infty} \sup \limits_{k \geq n} \left \{\frac{b_k}{b_{k+1}} \right \} = 1 .
\end{equation}
We will also need additional conditions on the non-linearity of the system. Those conditions are
\begin{equation}\tag{\ref{eq: warunek 4}}
\text{for every $n$ in $\N$, }\ \bigg |\frac{g''_n(x)}{g_n'(x)} \bigg | < c \text{ for some constant } c \in (0,\infty) \text{ independent of $n$} ,
\end{equation}
\begin{equation}\tag{\ref{eq: warunek 5}}
\lim \limits_{n \to \infty} \kappa(g_n) = 1 
\end{equation}
and
\begin{equation}\tag{\ref{eq: warunek 6}}
|g'_n| < \alpha < 1 
\end{equation}
for all $n \in \{ 1, 2, \dots \}$, and $\alpha$ independent of $n$.\\
Assuming conditions \eqref{warunek dodatkowy 3} - \eqref{eq: warunek 6}, we will prove in \Cref{thm: limit of the measure} that $\lim \limits_{n \to \infty}H_{h_n}(J_n)  =  1$ where $J_n$ is the limit set of the iterated function system $S_n = \{ g_k \}_{k = 1}^n$, and $h_n$ is the Hausdorff dimension of the set $J_n$. Since each $S_n$, $n \geq 2$ is a finite conformal iterated function system consisting of at least two elements and satisfying Open Set Condition, we have the following well-known result (see \cite{Mauldin_Urbanski_2003}, theorem 4.2.11)
\[
0<H_{h_n}(J_n)< \infty .
\]
Let $m_n(A):= H_n^{-1}(J_n) \cdot H_n(J_n \cap A)$. It follows from \cite{Mauldin_Urbanski_2003} that $m_n$ is the unique probability $h_n$ -conformal measure on $J_n$, where t-conformal measure $m$ is defined as follows 
\begin{equation}\label{eq: conformal measure}    
m(g_\omega(A)) = \int\limits_A |g_\omega'|^{t} dm
\end{equation}
for every Borel set $A \subset [0,1]$ and all $\omega \in \N^* := \bigcup_{n \geq 1} \N^n$. 
Now, we will state a lemma with an easy to prove property regarding the concept of distortion.
\begin{lemma}\label{lem: distortion lemmas}
Let $I_1, I_2$ be two intervals in $\mathbb{R}$. Let $\mathbb{D}(I_1, I_2)$ be the set of all diffeomorphisms from $I_1$ onto $I_2$. 
Assume $f \in \mathbb{D}(I_1, I_2)$ and $I$ is an interval contained in $I_1$, then
\[
\kappa^{-1}(f)|f'(x)| \cdot |I| \leq |f(I)| \leq \kappa(f) |f'(x)| |I|
\]
for every $x \in I_i$. In particular
\[
\kappa^{-1}(f)\sup\{|f'|\} \cdot |I| \leq |f(I)| \leq \kappa(f) \inf\{|f'|\} |I|.
\]
\end{lemma}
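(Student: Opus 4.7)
The plan is to reduce the lemma to a one-line application of the mean value theorem combined with the very definition of the distortion $\kappa(f)$. Since $f$ is $C^1$ on the closed interval $I$, the mean value theorem produces a point $\xi \in I$ such that
\[
|f(I)| = |f'(\xi)|\cdot |I|.
\]
All of the asserted inequalities will follow from comparing $|f'(\xi)|$ with $|f'(x)|$ for other choices of $x$.

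For the first double inequality, the definition of $\kappa(f)$ gives directly, for every $x \in I_1$ (and in particular for every $x \in I$),
\[
\kappa^{-1}(f)|f'(x)| \;\leq\; |f'(\xi)| \;\leq\; \kappa(f)|f'(x)|.
\]
Multiplying through by $|I|$ and substituting $|f'(\xi)|\cdot |I| = |f(I)|$ yields
\[
\kappa^{-1}(f)|f'(x)|\cdot |I| \;\leq\; |f(I)| \;\leq\; \kappa(f)|f'(x)|\cdot |I|,
\]
which is exactly what we need.

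For the ``in particular'' statement, I would optimize the choice of $x$ in the inequality just proved. By compactness of $I$ and continuity of $|f'|$, the quantities $\inf\{|f'|\}$ and $\sup\{|f'|\}$ are attained on $I$. Choosing $x$ to be a point where $|f'|$ attains its infimum on $I$ makes the right-hand side $\kappa(f)|f'(x)|\cdot |I|$ equal to $\kappa(f)\inf\{|f'|\}\cdot |I|$, giving the claimed upper bound; dually, choosing $x$ at the supremum converts the left-hand side into $\kappa^{-1}(f)\sup\{|f'|\}\cdot |I|$. There is essentially no obstacle here: the whole lemma is a direct unpacking of the definition of $\kappa$ together with one invocation of the mean value theorem, and the ``in particular'' refinement is genuinely sharper than the trivial bounds $\inf\{|f'|\}\cdot|I|\leq |f(I)|\leq \sup\{|f'|\}\cdot|I|$ only because by definition $\sup\{|f'|\}\leq \kappa(f)\inf\{|f'|\}$.
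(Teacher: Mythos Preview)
Your proof is correct. The paper does not actually supply a proof of this lemma---it merely introduces it as ``a lemma with an easy to prove property regarding the concept of distortion''---so your mean-value-theorem argument combined with the definition of $\kappa(f)$ is precisely the intended one-liner. One small cosmetic point: in the ``in particular'' part you speak of the sup and inf being attained on $I$, but the distortion $\kappa(f)$ is taken over all of $I_1$, and the first inequality holds for every $x\in I_1$; thus the sharper reading (and the one consistent with how the lemma is applied later) is that $\sup\{|f'|\}$ and $\inf\{|f'|\}$ are over $I_1$. Your argument covers this just as well, since $|f'(y)|/|f'(\xi)|\le\kappa(f)$ for all $y\in I_1$ gives $\sup_{I_1}|f'|\le\kappa(f)|f'(\xi)|$ directly.
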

We will need one more, rather standard lemma.
\begin{lemma}\label{lem: distortion lemma additional}
Let $g_k \in C^2([0,1])$ for $k \in \{1, 2, 3 \dots \}$. Assume there exists $\alpha < 1$ such that $|g_k'(x)| < \alpha$ for each $k\in \{1, 2, 3 \dots \}$, $x \in [0,1]$. Moreover assume there exists $c > 0$ such that
\[
\left|\frac{g_k''(x)}{g_k'(x)}\right| < c
\]
for each $k\in \{1, 2, 3 \dots \}$, $x \in [0,1]$.
Then, there is a constant $D>0$ such that
\[
\left|\frac{g_\omega'(x)}{g_\omega'(y)}\right| \leq 1 + \frac{c}{1-\alpha}|x-y| + D(|x-y|^2).
\]
for $\omega \in \N^*$.
\end{lemma}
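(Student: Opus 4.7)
The plan is to pass to the logarithm of the product derivative, apply the chain rule, control each summand with the mean value theorem, and then exploit the uniform contraction $|g_k'|<\alpha$ to sum a geometric series. Exponentiating and expanding $e^{t}$ to second order then yields the claimed quadratic remainder.

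Concretely, fix $\omega=[\omega_1,\dots,\omega_k]\in\N^*$ and define the backward orbits $x_k:=x$, $x_j:=g_{\omega_{j+1}}\circ\cdots\circ g_{\omega_k}(x)$ for $j=1,\dots,k-1$, and analogously $y_j$ starting from $y_k:=y$. The chain rule gives $g_\omega'(z)=\prod_{j=1}^{k}g_{\omega_j}'(z_j)$, so
\[
\log\left|\frac{g_\omega'(x)}{g_\omega'(y)}\right|=\sum_{j=1}^{k}\bigl(\log|g_{\omega_j}'(x_j)|-\log|g_{\omega_j}'(y_j)|\bigr).
\]
The mean value theorem applied to the $C^1$ function $t\mapsto\log|g_{\omega_j}'(t)|$ together with the hypothesis $|g_k''/g_k'|<c$ bounds each summand in absolute value by $c\,|x_j-y_j|$. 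Iterating $|g_{\omega_i}'|\le\alpha$ backwards gives the contraction estimate $|x_j-y_j|\le\alpha^{k-j}|x-y|$, so summing the geometric series yields
\[
\left|\log\left|\frac{g_\omega'(x)}{g_\omega'(y)}\right|\right|\le c\sum_{j=1}^{k}\alpha^{k-j}|x-y|\le \frac{c}{1-\alpha}|x-y|.
\]

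Exponentiating both sides and noting that the right-hand side stays in a bounded interval (since $|x-y|\le 1$ and $c,\alpha$ are fixed), the Taylor remainder $e^{t}=1+t+O(t^{2})$ produces a constant $D=D(c,\alpha)>0$ such that
\[
\left|\frac{g_\omega'(x)}{g_\omega'(y)}\right|\le \exp\!\Bigl(\tfrac{c}{1-\alpha}|x-y|\Bigr)\le 1+\frac{c}{1-\alpha}|x-y|+D|x-y|^{2},
\]
which is exactly the claimed bound. Uniformity in $k$ and $\omega$ is automatic: $c$ and $\alpha$ are assumed uniform in $k$, and the geometric sum is controlled by $(1-\alpha)^{-1}$ independently of the length of $\omega$. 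There is no substantive obstacle here, since this is the standard bounded distortion estimate for $C^{2}$ contractions; the only real choice is how to package the quadratic error, which is a mechanical application of Taylor's formula to the exponential on a fixed compact interval.
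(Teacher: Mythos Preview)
Your proof is correct and follows the same approach as the paper: take the logarithm, decompose via the chain rule, bound each term with the mean value theorem using $|g_k''/g_k'|<c$, sum the resulting geometric series in $\alpha$, and finally exponentiate and Taylor-expand. Your indexing via the backward orbits $x_j$ is in fact slightly cleaner than the paper's.
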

\begin{proof}
We include the proof for completeness. Let $\omega = [i_{1}, i_{2}, \dots, i_{k}]$. Then
\[
\left| \log \frac{|g_\omega'(x)|}{|g_\omega'(y)|}\right| = |\log |(g_\omega'(x))| - \log(|g_\omega'(y)|) | =
\]
\[
= \left| \sum\limits_{k = 1}^{|\omega|} \log |g_{i_k}'(g_{i_{k-1}} g_{i_{k-2}} \dots g_{1}(x))| - \sum\limits_{k = 1}^{|\omega|} \log |g_{i_k}'(g_{i_{k-1}} g_{i_{k-2}} \dots g_{1}(y))| \right |  \leq 
\]
\[
\leq \sum\limits_{k = 1}^{|\omega|} \left| \log |g_{i_k}'(g_{i_{k-1}} g_{i_{k-2}} \dots g_{1}(x))| - \log |g_{i_k}'(g_{i_{k-1}} g_{i_{k-2}} \dots g_{1}(y))| \right |
\]
Now, using intermediate value theorem and fact that $\log |(g_i'(x))|' = \frac{g_i''(x)}{g_i'(x)}$ we get
\[
\sum\limits_{k = 1}^{|\omega|} \left| \log |g_{i_k}'(g_{i_{k-1}} g_{i_{k-2}} \dots g_{1}(x))| - \log |g_{i_k}'(g_{i_{k-1}} g_{i_{k-2}} \dots g_{1}(y))| \right | \leq 
\]
\[
\leq \sum\limits_{k = 1}^{|\omega|} \left |\max\limits_{z \in [0,1]} \frac{|g_{i_k}''(z)|}{|g_{i_k}'(z)|}\right| \cdot \left| g_{i_k}\circ g_{i_{k-1}}\circ  g_{i_{k-2}} \dots g_{1}(x) - g_{i_k}\circ g_{i_{k-1}} \circ g_{i_{k-2}} \dots g_{1}(y) \right |.
\]
Since $|g_k'(x)| \leq \alpha$ for each $k \in \{1,2 \dots \}$, we get
\[
\left| \log \frac{|g_\omega'(x)|}{|g_\omega'(y)|}\right| \leq c \cdot \sum\limits_{k = 1}^{|\omega|} \left| g_{i_k}\circ g_{i_{k-1}}\circ  g_{i_{k-2}} \dots g_{1}(x) - g_{i_k}\circ g_{i_{k-1}} \circ g_{i_{k-2}} \dots g_{1}(y) \right | \leq
\]
\[
\leq c \cdot \sum\limits_{k = 1}^{|\omega|} \alpha^k |x-y| \leq \frac{c}{1-\alpha} |x-y|.
\]
And thus
\[
\frac{|g_\omega'(x)|}{|g_\omega'(y)|}\leq e^{\frac{c}{1-\alpha} |x-y|}\leq 1 + \frac{c}{1-\alpha}|x-y| + D(|x-y|^2),
\]
for some constant $D > 0$, independent of $\omega$, which concludes the proof.
\end{proof}
\subsection{Estimate from below}
In this part, we want to show that
\[
\liminf\limits_{n\to\infty}H_{h_n}(J_n) \geq 1.
\]
\par We do it in several steps.
\par At first we estimate the lower limit of densities for intervals with diameter at least $\delta >0$ and next for the intervals of the form $[0,r]$ with $r \to 0$. We extend the estimate to the family of intervals to the intervals which have one of its endpoints as $g_k(0)$ for fixed $k \in \N$. 

\par Secondly, we expand on the set of intervals for which we can estimate the lower limit of densities, up to a point where we can deduce that lower limit of densities over any subinterval of $[0,1]$ is at least 1, using lemmas proven in the first part.

\par The general strategy of the proof is similar to the one proposed by \cite{UZ}. We significantly changed the proof to accommodate for more general case, however the general idea stays similar. Moreover, our approach makes the proof significantly easier.

\par We start by showing lemma saying that the lower limit of densities is at least 1 on the intervals that have diameter at least $\delta >0$. 
\begin{lemma} \label{lem:duzeprzedzialy}
For every $\delta > 0$ let $I_\delta$ be a family of all closed intervals $\Delta \subset [0,1]$ such that $|\Delta| \geq \delta$. Then for every $\varepsilon>0$ there exists $n_0 \in \N$ such that for every $n \geq n_0$ and $\Delta \in I_\delta$ we have
\begin{equation}\label{Rdelta}
\frac{m_n(\Delta)}{|\Delta|^{h_n}} \leq 1 + \varepsilon.
\end{equation}
\end{lemma}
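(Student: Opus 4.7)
The plan is to cover $\Delta$ by cylinders of a sufficiently high generation $l=l(\varepsilon,\delta)$ and bound $m_n(\Delta)$ using the conformality of $m_n$ together with Jensen's inequality and condition \eqref{warunek dodatkowy 3}. Given $\Delta\in I_\delta$, I would decompose the collection of generation-$l$ cylinders $\{g_\omega([0,1]):\omega\in\{1,\dots,n\}^l\}$ into the sub-family $\mathcal{A}$ of those contained in $\Delta$ and the at most two cylinders $F_L,F_R$ straddling the endpoints of $\Delta$, giving
\[
m_n(\Delta)\;\le\;\sum_{\omega\in\mathcal{A}}m_n\bigl(g_\omega([0,1])\bigr)+m_n(F_L)+m_n(F_R).
\]

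Applying conformality \eqref{eq: conformal measure} with $A=[0,1]$ and the distortion bound of \Cref{lem: distortion lemma additional} (which uses \eqref{eq: warunek 4} and \eqref{eq: warunek 6}), each cylinder satisfies $m_n(g_\omega([0,1]))\le K^{h_n}|g_\omega([0,1])|^{h_n}$, where $K\ge 1$ is uniform in $\omega$ and, thanks to \eqref{eq: warunek 5}, can be taken close to $1$ once $n$ is large. Summing over $\omega\in\mathcal A$, the inequalities $\sum_{\omega\in\mathcal A}|g_\omega([0,1])|\le|\Delta|$ and $|\mathcal A|\le n^l$, combined with \Cref{lem: oszacowanie na sume wi} (Jensen), give
\[
\sum_{\omega\in\mathcal{A}}|g_\omega([0,1])|^{h_n}\;\le\;|\mathcal{A}|^{1-h_n}|\Delta|^{h_n}\;\le\;n^{l(1-h_n)}|\Delta|^{h_n},
\]
and by \eqref{warunek dodatkowy 3} the prefactor $n^{l(1-h_n)}$ tends to $1$ as $n\to\infty$ for any fixed $l$. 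For the two boundary cylinders, \eqref{eq: warunek 6} yields $|F_{L/R}|\le\alpha^l$, so their combined measure is at most $2K^{h_n}\alpha^{lh_n}$, and after division by $|\Delta|^{h_n}\ge\delta^{h_n}$ this becomes arbitrarily small by choosing $l$ large enough.

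Assembling the pieces,
\[
\frac{m_n(\Delta)}{|\Delta|^{h_n}}\;\le\;K^{h_n}\,n^{l(1-h_n)}+\frac{2K^{h_n}\alpha^{lh_n}}{\delta^{h_n}}\;\le\;1+\varepsilon,
\]
by first fixing $l$ large enough so the boundary term is below $\varepsilon/2$ and the effective distortion constant is within $\varepsilon/4$ of $1$, and then taking $n_0$ large so that $n^{l(1-h_n)}\le 1+\varepsilon/4$ for $n\ge n_0$. The main technical obstacle I anticipate is making the distortion constant $K$ effectively close to $1$: \Cref{lem: distortion lemma additional} by itself only delivers a universal constant bounded away from $1$, so one must exploit \eqref{eq: warunek 5} more finely, for instance by splitting $\omega\in\mathcal{A}$ into words whose letters all lie above a threshold $M$ (where $\kappa(g_{\omega_i})$ is uniformly close to $1$) and a residual family whose total $m_n$-mass is shown to vanish as $n\to\infty$.
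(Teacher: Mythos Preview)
Your covering argument yields a bound, but not the sharp one the lemma asserts. The inequality $m_n(g_\omega([0,1]))\le K^{h_n}|g_\omega([0,1])|^{h_n}$ holds with $K=\kappa(g_\omega|_{[0,1]})$, and by \Cref{lem: distortion lemma additional} this is bounded by the universal constant $K_0=e^{c/(1-\alpha)}$, independently of $n$ and of $\omega$. Condition~\eqref{eq: warunek 5} only says $\kappa(g_k)\to 1$ for \emph{individual} maps as $k\to\infty$; it says nothing about $\kappa(g_\omega)$ for words $\omega\in\{1,\dots,n\}^l$ that contain small letters, and such words are unavoidable. Your final display therefore gives at best $\frac{m_n(\Delta)}{|\Delta|^{h_n}}\le K_0\cdot n^{l(1-h_n)}+o(1)$, which tends to $K_0>1$, not to $1$.

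The fix you sketch does not rescue this, because the roles of the two families are the opposite of what you state. Words with all letters $\ge M$ do have distortion close to $1$, but by conformality their total $m_n$-mass is at most of order $\bigl(\sum_{k\ge M}\|g_k'\|^{h_n}\bigr)^l\approx b_{M-1}^l$, which is \emph{small}; the ``residual'' words containing some letter $<M$ therefore carry essentially all of the mass, and for those words the distortion constant is stuck near $K_0$. So the splitting does not isolate a negligible bad set.

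The paper avoids cylinder bookkeeping altogether. It first records (via Theorem~3.16 of \cite{urbanski-mauldin}) that $m_n$ converges weak-$*$ to Lebesgue measure on $[0,1]$, and then argues by contradiction and compactness: if the lemma failed, one could extract $\Delta_{n_j}\to\Delta\in I_\delta$ with density ratio tending to some $1/\eta>1$; enclosing $\Delta$ in a slightly larger interval $\hat\Delta$ with $|\hat\Delta|/|\Delta|<1/\beta$ for $\eta<\beta<1$ and using $m_{n_j}(\hat\Delta)\to|\hat\Delta|$ forces $1\ge\beta/\eta>1$. This route needs no quantitative distortion control on cylinders, which is precisely why it delivers the constant $1+\varepsilon$ rather than $K_0(1+\varepsilon)$.
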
 
\begin{proof} Assume the contrary that there exists $\delta > 0$ such that the family $I_\delta$ does not fulfill \eqref{Rdelta}. This means that there exists $\eta \in [0,1)$, an increasing sequence $(n_j)^\infty_{j=1},\ n_j \in \mathbb{N}$ and a sequence $(\Delta_j)^\infty_{j=1}$ of closed intervals in $I_\delta$ such that 
\[
\lim\limits_{j \to \infty} \frac{m_{n_j}(\Delta_j)}{|\Delta_j|^{h_{n_j}}} = \frac{1}{\eta}.
\]
Now to continue this proof we will need a standard, auxiliary lemma letting us estimate the upper limit of the density of long intervals.
\begin{lemma}\label{lem: sequence mn weakly convergent to leb}
The sequence of measures $m_n$ converges weakly - * to the Lebesgue measure $m$ on the interval $[0,1]$.
\end{lemma}
\begin{proof}
First, we recall Theorem 3.16 from \cite{urbanski-mauldin}.
\begin{theorem*}[Theorem 3.16 in \cite{urbanski-mauldin}]
If $S = \{\phi_i: i \in I\}$ is a regular system, then $\lim\limits_{F \in Fin}m_F = m_I$ in the weak-* topology on C(X), where $Fin$ is a finite subset of $I$.
\end{theorem*}
In the theorem, the authors define a regular system as a conformal iterated function system that admits a t-conformal measure (see Definition \ref{eq: conformal measure}) and satisfies $m(\phi_i(X)\cap\phi_j(X)) = 0$ for every pair $i \neq j$, $i,j \in I$. In our setting, the measure $m_I$ is the Lebesgue measure $m$. Moreover, for $k\neq j$ the intersection $g_k([0,1])\cap g_j([0,1])$ contains at most one point, and hence has the Lebesgue measure zero. Consequently, Theorem 3.16 of \cite{urbanski-mauldin} applies and implies that the sequence $m_n$ converges, in the weak - * topology, to the unique conformal measure, which in this case is Lebesgue measure $m$. This ends the proof of \Cref{lem: sequence mn weakly convergent to leb}.

\end{proof}
We continue the proof of \Cref{lem:duzeprzedzialy}.
Modifying $(n_j)_{j=1}^\infty$, one can assume that the sequence of the intervals $\Delta_j = [a_j, b_j] \to [a,b] = \Delta$ as $j \to \infty$, where $\Delta$ is a interval contained in $[0,1]$.
Take an arbitrary closed interval $\hat{\Delta}$ such that $\Delta \subset \operatorname{int}\hat{\Delta}$ and $\frac{|\hat{\Delta}|}{|\Delta|}< \frac{1}{\beta}$, where $\eta < \beta<1$. Then
\begin{enumerate}
    \item for all $j$ large enough $\Delta_j \subset \hat{\Delta}$,
    \item $\lim \limits_{j \to \infty}m_{n_j}(\hat{\Delta})=m(\hat{\Delta})$ since $m(\partial\hat{\Delta})=0$,
    \item $m_{n_j}(\Delta_
    j)\leq m_{n_j}(\hat{\Delta})$.
\end{enumerate}
Thus
\[
1 = \frac{m(\hat{\Delta})}{|\hat{\Delta}|} = \frac{\lim\limits_{j \to \infty}m_{n_j}(\hat{\Delta})}{|\hat{\Delta}|} \geq \frac{\limsup\limits_{j \to \infty}m_{n_j}(\Delta_j)}{|\hat{\Delta}|} = \limsup \limits_{j \to \infty}\frac{|\Delta_j|^{h_{n_j}}}{|\hat{\Delta}|} \frac{m_{n_j}(\Delta_j)}{|\Delta_j|^{h_{n_j}}} \geq \frac{\beta}{\eta} > 1.   
\]
This leads to a contradiction, proving \Cref{lem:duzeprzedzialy}.
\end{proof}
Equipped with this lemma, we can prove an analogue to \Cref{dziobanie}.
\begin{lemma}\label{prop: uproszczony nieliniowy}
Let IFS fulfills the conditions
\eqref{warunek dodatkowy 3}, \eqref{warunek dodatkowy 2}, \eqref{eq: warunek 4} and \eqref{eq: warunek 5}. Then, for every $\varepsilon >0$ there exists $n_0 \in \N$ and $r_0 >0$ such that if $r \in (0,r_0)$ and $n> n_0$ then
\[
 \frac{m_n([0,r])}{r^{h_n}}\leq 1 + \varepsilon.
\]
\end{lemma}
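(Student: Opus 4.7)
The plan is to adapt the argument of Proposition~\ref{dziobanie} to the nonlinear setting by replacing the exact linear self-similarity with the conformality relation \eqref{eq: conformal measure} and the distortion estimates of Lemmas~\ref{lem: distortion lemmas} and \ref{lem: distortion lemma additional}. As a preliminary step I establish a nonlinear analogue of Lemma~\ref{b przez a}: by \eqref{eq: warunek 6}, an $l$-th generation cylinder $[a,b]=g_{q_1\ldots q_l}([0,1])$ has diameter at most $\kappa(g_{q_1\ldots q_l})\cdot\alpha^l$, while $a\geq b_{q_1}$. Condition \eqref{warunek dodatkowy 2} yields $(b_{k-1}-b_k)/b_k\to 0$, hence $\sup_k(b_{k-1}-b_k)/b_k<\infty$; combined with the universal bound on composition distortion coming from Lemma~\ref{lem: distortion lemma additional}, this gives, for any $\varepsilon>0$, an odd integer $l$ such that $b/a\leq 1+\varepsilon/4$ uniformly over every $l$-th generation cylinder and every $n$. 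Fix such an $l$.

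Next, exactly as in the linear case, I replace $r$ by the largest right endpoint of an $l$-th generation cylinder not exceeding $r$ (this does not decrease the density), and let $(q_1,\ldots,q_l)$ be the address of the cylinder containing $r$; oddness of $l$ gives $b_{q_1\ldots q_l}\leq r\leq b_{q_1\ldots q_l-1}$. Decomposing
\[
[0,b_{q_1\ldots q_l-1}]=[0,b_{q_1}]\cup\bigcup_{k=2}^{l}[b_{q_1\ldots q_{k-1}},b_{q_1\ldots q_k-1}],
\]
each intermediate block is the image of a subinterval of $[0,1]$ of length $b_{q_k-1}$ under $g_{q_1\ldots q_{k-1}}$. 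Writing $D_\omega:=|g_\omega([0,1])|$, the conformality \eqref{eq: conformal measure} combined with Lemma~\ref{lem: distortion lemmas} gives
\[
m_n([0,r])\;\leq\;\sum_{k=1}^{l}\kappa(g_{q_1\ldots q_{k-1}})^{h_n}\,D_{q_1\ldots q_{k-1}}^{h_n}\, m_n([0,b_{q_k-1}]),
\]
with the convention that the empty composition is the identity, and each inner $m_n([0,b_{q_k-1}])$ is itself at most $\sum_{j\geq q_k-1}\kappa(g_j)^{h_n}D_j^{h_n}$. A matching lower bound $r^{h_n}\geq b_{q_1\ldots q_l}^{h_n}$ is obtained by the opposite halves of Lemma~\ref{lem: distortion lemmas}; it has the same structure but with $q_k$ replacing $q_k-1$ in the innermost sums and reciprocal distortion factors.

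The ratio $m_n([0,r])/r^{h_n}$ then factors as $A\cdot B\cdot K$, in analogy with the linear proof. The Jensen factor $A$ is bounded via Lemma~\ref{lem: oszacowanie na sume wi} by $(nl)^{1-h_n}$, which tends to $1$ by \eqref{warunek dodatkowy 3} since $l$ is fixed; the geometric factor $B=(b/a)^{h_n}\leq 1+\varepsilon/4$ by the preliminary step; and $K$ collects the aggregate distortion corrections. Since $r\to 0$ forces $q_1\to\infty$, condition \eqref{eq: warunek 5} gives $\sup_{j\geq q_1}\kappa(g_j)\to 1$, while Lemma~\ref{lem: distortion lemma additional} bounds each composition distortion of $g_{q_1\ldots q_{k-1}}$ on $[0,b_{q_k-1}]$ by $1+O(b_{q_k-1})$. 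Together these imply $K\to 1$, and hence $m_n([0,r])/r^{h_n}\leq 1+\varepsilon$ for $n$ sufficiently large and $r$ sufficiently small.

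The main obstacle is the last step: verifying that the aggregate distortion factor $K$ actually tends to $1$. The subtle point is that whenever an index $q_k$ is small, the corresponding composition distortion of $g_{q_1\ldots q_{k-1}}$ on $[0,b_{q_k-1}]$ is only bounded by a universal constant rather than being close to $1$. One must exploit the fact that when this happens the accompanying factor $D_{q_1\ldots q_{k-1}}^{h_n}$ is geometrically small (the outer indices drag it toward zero through \eqref{eq: warunek 6}), so the corresponding block contributes only a negligible weight to the total sum. After this balancing is carried out, the remainder of the argument is a direct translation of the linear case.
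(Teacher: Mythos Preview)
Your approach transplants the full $l$-generation machinery of Proposition~\ref{dziobanie} into the nonlinear setting, but the paper's proof is far simpler and avoids the distortion difficulties entirely. The key observation you are missing is that condition~\eqref{warunek dodatkowy 2} is strictly stronger than~\eqref{warunek3}: since $b_{k-1}/b_k\to 1$, one no longer needs to pass to deep generations to control the ratio $b/a$. The paper works only with \emph{first}-generation cylinders: if $b_k<r\le b_{k-1}$ with $k$ large, then
\[
m_n([0,r])\le m_n([0,b_{k-1}])\le\sum_{j=k}^n\|g_j'\|_\infty^{h_n}\le\sum_{j=k}^n\kappa(g_j)^{h_n}(b_{j-1}-b_j)^{h_n},
\]
while $r\ge b_k=(b_k/b_{k-1})\,b_{k-1}\ge(b_k/b_{k-1})\sum_{j=k}^n(b_{j-1}-b_j)$. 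Dividing and applying Lemma~\ref{lem: oszacowanie na sume wi} gives at most $(1+\tfrac{\varepsilon}{4})^{h_n}(b_{k-1}/b_k)^{h_n}n^{1-h_n}$, and each factor tends to~$1$ by~\eqref{eq: warunek 5}, \eqref{warunek dodatkowy 2}, \eqref{warunek dodatkowy 3}. No composition distortion ever enters.

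By contrast, your $l$-level argument runs into the obstacle you yourself flag: when some intermediate $q_k$ is small, the distortion of $g_{q_1\ldots q_{k-1}}$ on $[0,b_{q_k-1}]$ is only universally bounded. The proposed ``balancing'' via the smallness of $D_{q_1\ldots q_{k-1}}^{h_n}$ is not convincing as stated, because the quantity you must control is a \emph{ratio of sums}, not a weighted average; a term with a large distortion factor in the numerator is not automatically offset by the same small $D$-factor appearing in the denominator, since the matching denominator term carries a \emph{reciprocal} distortion correction that moves in the wrong direction. This gap is real, and rather than patching it you should exploit~\eqref{warunek dodatkowy 2} directly as the paper does.
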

\begin{proof}
Fix $\varepsilon >0$. If $r \leq b_{n}$, then $m_n([0,r]) = 0$. So, let $r > b_{n}$. There exists unique $k\leq n$ such that $b_{k}< r \leq b_{k-1}$. Moreover, from \eqref{eq: warunek 5} there exists $k_0 \in \N$ such that $\kappa(g_k) < 1 + \frac{\varepsilon}{4}$ and $\frac{b_k}{b_{k+1}}< 1 + \frac{\varepsilon}{4}$ for every $k \geq k_0$, thus let r be small enough that $b_{k}< r \leq b_{k-1}$ with $k \geq k_0$. Then
\begin{equation}\label{estymacja mn}
m_n([0,r]) \leq m_n([0,b_{k-1}]) \leq \sum\limits_{j = k}^{n} ||g_j'||_{\infty}^{h_n} m_n([0,1]) \leq 
\end{equation}
\[
\leq \sum\limits_{j = k}^{n} \kappa(g_j)^{h_n} \left( b_{j-1} - b_{j}\right)^{h_n} \leq  \sum\limits_{j = k}^{n} (1+ \frac{\varepsilon}{4})^{h_n} \left( b_{j-1} - b_{j}\right)^{h_n}.
\]
On the other hand we get
\begin{equation}\label{estymacja r}
r \geq \frac{b_{k}}{b_{k-1}} \cdot b_{k-1} \geq \frac{b_{k}}{b_{k-1}} \cdot \sum \limits_{j=k}^{n}(b_{j-1} - b_{j})
\end{equation}
Now, putting those two estimates together we obtain
\begin{equation}\label{eq: last inequality 0,r}
\frac{m_n([0,r])}{|[0,r]|^{h_n}} \leq \frac{(1+ \frac{\varepsilon}{4})^{h_n}\sum\limits_{j = k}^{n} \left( b_{j-1} - b_{j}\right)^{h_n}}{ b_{k}^{h_n}} 
\leq(1+ \frac{\varepsilon}{4})^{h_n} \cdot\sum _{j = k} ^{n } w_j^{h_n} \cdot \left (\frac{b_{k-1}}{b_{k}}\right )^{h_n},
\end{equation}
where $w_j = \frac{b_{j-1}-b_{j}}{\sum\limits_{j=k}^{n}(b_{j-1}-b_{j})}$.
Now, it follows from \Cref{lem: oszacowanie na sume wi} that the sum $\sum _{j = k} ^{n} w_j^{h_n}$ is at most $(\frac{1}{(n-k+1)})^{1-h_n}$, thus 
 \[
 (1+ \frac{\varepsilon}{4})^{h_n} \sum _{j = k} ^{n } w_j^{h_n} \leq (n-k+1) \cdot \frac{1}{(n-k+1)^{h_n}} \leq(1+ \frac{\varepsilon}{4})^{h_n} n^{1-h_{n}}.
 \]
As for the second part of the product in \eqref{eq: last inequality 0,r}: $\left (\frac{b_{k-1}}{b_{k}}\right )^{h_n}$, we know from Condition \eqref{warunek dodatkowy 2} and \eqref{warunek dodatkowy 3} that we can chose $n_0 \in \N$ such that for all $n \geq n_0$ we have $\left (\frac{b_{k-1}}{b_{k}}\right )^{h_n}< 1 + \frac{\varepsilon}{4}$ and $(1+ \frac{\varepsilon}{4})^{h_n} n^{1-h_{n}} < 1 + \frac{\varepsilon}{3}$.
Thus
\[
\frac{m_n([0,r])}{|[0,r]|^{h_n}} \leq (1+ \frac{\varepsilon}{4})^{h_n} n^{1-h_{n}}(1 +\frac{\varepsilon}{4}) \leq 1 + \varepsilon
\]
for all $n \geq n_0$ and all $r \leq b_{k_0}$.
\end{proof}
As a consequence of \Cref{lem:duzeprzedzialy} and \Cref{prop: uproszczony nieliniowy}, we get the following lemma. It differs from \Cref{prop: uproszczony nieliniowy}, as now we prove it for arbitrary $r \in (0,1]$.
\begin{lemma}\label{lem: estymacja na 0 r}
Let IFS fulfills the conditions
 \eqref{warunek dodatkowy 3}, \eqref{warunek dodatkowy 2}, \eqref{eq: warunek 4} and \eqref{eq: warunek 5}. Then, for every $\varepsilon >0$ there exists $n_0 \in \N$ and such that if $n> n_0$ then for every $r \in (0,1]$ we have
\[
 \frac{m_n([0,r])}{r^{h_n}}\leq 1 + \varepsilon.
\]
\end{lemma}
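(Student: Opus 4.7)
The plan is to derive this lemma by a direct case split that combines the two previously established estimates, treating $r$ small and $r$ bounded away from zero separately. The point of the lemma is purely to remove the restriction $r < r_0$ present in \Cref{prop: uproszczony nieliniowy}; the threshold $r_0$ itself becomes the hinge of the argument.

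First I would fix $\varepsilon>0$ and apply \Cref{prop: uproszczony nieliniowy} to obtain an index $n_1 \in \N$ and a radius $r_0 > 0$ such that
\[
\frac{m_n([0,r])}{r^{h_n}}\leq 1+\varepsilon
\]
for all $n \geq n_1$ and all $r \in (0, r_0)$. This immediately disposes of the small-scale regime.

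Next I would handle the complementary regime $r \in [r_0, 1]$ by invoking \Cref{lem:duzeprzedzialy} with the specific choice $\delta := r_0$. That lemma produces an index $n_2 \in \N$ such that every closed interval $\Delta \subset [0,1]$ with $|\Delta| \geq r_0$ satisfies $m_n(\Delta)/|\Delta|^{h_n} \leq 1+\varepsilon$ whenever $n \geq n_2$. Since $[0,r]$ with $r \in [r_0, 1]$ is itself such an interval (its diameter is exactly $r \geq r_0$), applying this estimate to $\Delta = [0,r]$ yields the desired bound on this range.

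Setting $n_0 := \max(n_1, n_2)$ gives a single threshold that works for all $r \in (0, 1]$, which completes the argument. The structure is essentially a gluing at the scale $r_0$, with no technical obstacle: the small-$r$ case is delicate and was already handled by \Cref{prop: uproszczony nieliniowy}, while the large-$r$ case is absorbed by the weak-$\ast$ convergence argument encoded in \Cref{lem:duzeprzedzialy}. The only subtlety worth highlighting is the order in which the parameters are chosen, namely that $r_0$ must be picked first (from the nonlinear small-scale estimate) and then passed to \Cref{lem:duzeprzedzialy} as the value of $\delta$; once this order is respected, the two estimates interlock seamlessly.
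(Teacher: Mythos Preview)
Your proof is correct and follows essentially the same approach as the paper: fix $\varepsilon$, use \Cref{prop: uproszczony nieliniowy} to obtain a threshold $r_0$ handling small $r$, then feed $\delta = r_0$ into \Cref{lem:duzeprzedzialy} to handle $r \in [r_0,1]$, and finally take $n_0 = \max(n_1,n_2)$. The paper's argument is identical up to notation (it writes $r_\varepsilon$ for your $r_0$).
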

\begin{proof}
Fix $\varepsilon>0$. By \Cref{prop: uproszczony nieliniowy}, there exists $n_1 \in \N$ and $r_\varepsilon >0$ such that if $r \in (0,r_\varepsilon)$ and $n> n_1$ then
\[
 \frac{m_n([0,r])}{r^{h_n}}\leq 1 + \varepsilon.
\]
Now, from \Cref{lem:duzeprzedzialy} there exists $n_2 \in \N$ such that for every $n \geq n_2$ and $r \geq r_\varepsilon$
\[
 \frac{m_n([0,r])}{r^{h_n}}\leq 1 + \varepsilon.
\]
Now it suffices to notice that setting $n_0= \max\{n_1, n_2\}$ yields
\[
\frac{m_n([0,r])}{r^{h_n}} \leq \max \left \{ \sup\limits_{[0,r_\varepsilon]}\left \{ \frac{m_n([0,r])}{r^{h_n}}\right \}, \sup\limits_{[r_\varepsilon, 1]} \left \{ \frac{m_n([0,r])}{r^{h_n}}\right \} \right \}  \leq 1+ \varepsilon,
\]
which ends the proof.
\end{proof}
Fix $k \in \N$. Denote by $K(k,r)$ the interval 
\[
K(k,r) = [b_{k-1} - r, b_{k-1}], \text{ where } r \in (0,1).
\]
Denote by $K(k)$ the collection of intervals:
\[
K(k) = \{K(k,r): r \in (0,b_{k-1}) \}
\]
\begin{lemma}\label{lem: fixed omega}
Fix $k \in \N$. Then for every $\varepsilon >0$ there exists $n_k^- \in \N$ such that for each $n \geq n_k^-$
\[
 \frac{m_n(\Delta)}{|\Delta|^{h_n}} \leq 1 + \varepsilon
\]
for every $\Delta \in K(k)$.
\end{lemma}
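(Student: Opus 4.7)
The plan is to split the family $K(k)$ at a threshold chosen so that short intervals can be pulled back by $g_k^{-1}$ into an interval of the form $[0,\hat r]$ on which \Cref{lem: estymacja na 0 r} applies, while long intervals are directly handled by \Cref{lem:duzeprzedzialy}. The crucial observation is that, although $k$ is fixed (so $\kappa(g_k)$ on the whole $[0,1]$ is some constant not necessarily close to~$1$), the restriction $\kappa(g_k|_J)$ tends to~$1$ as $|J|\to 0$ by uniform continuity of $\log|g_k'|$ on $[0,1]$; and in the pulled-back interval $[0,\hat r]$ we have $\hat r\to 0$ precisely when $r\to 0$.

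Concretely, given $\varepsilon>0$, I would first choose $\delta\in(0,1]$ so small that $\kappa(g_k|_{[0,\delta]})\leq (1+\varepsilon)^{1/2}$, which is possible because $g_k\in C^2$ and $g_k'\neq 0$ on $[0,1]$. Set $r_\delta:=b_{k-1}-g_k(\delta)>0$; this is a fixed positive constant depending only on $k$ and $\varepsilon$. Since $g_k$ is orientation-reversing with $g_k(0)=b_{k-1}$ and $g_k(1)=b_k$, we have $r_\delta\leq b_{k-1}-b_k$, so $K(k,r)\subseteq g_k([0,1])$ whenever $r\leq r_\delta$.

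For $r\leq r_\delta$, let $\hat r:=g_k^{-1}(b_{k-1}-r)\in(0,\delta]$, so that $K(k,r)=g_k([0,\hat r])$. The conformal-measure identity \eqref{eq: conformal measure} together with the mean value theorem then yields
\[
\frac{m_n(K(k,r))}{|K(k,r)|^{h_n}}\ \leq\ \kappa(g_k|_{[0,\delta]})^{h_n}\cdot\frac{m_n([0,\hat r])}{\hat r^{h_n}}\ \leq\ (1+\varepsilon)^{1/2}\cdot\frac{m_n([0,\hat r])}{\hat r^{h_n}},
\]
and by \Cref{lem: estymacja na 0 r} the remaining factor is at most $(1+\varepsilon)^{1/2}$ for all $n$ beyond some threshold $n_1$, so the product is at most $1+\varepsilon$. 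For $r>r_\delta$, the length $|K(k,r)|=r$ exceeds the fixed positive constant $r_\delta$, so \Cref{lem:duzeprzedzialy} applied to the family of intervals of length at least $r_\delta$ provides the bound $1+\varepsilon$ for all $n\geq n_2$, for some $n_2$. Setting $n_k^-:=\max\{n_1,n_2\}$ yields the conclusion. The only potentially delicate step is the initial choice of $\delta$; but this is straightforward from uniform continuity of $\log|g_k'|$ on $[0,1]$, which holds because $g_k\in C^2$ and $g_k'$ is bounded away from zero on this fixed compact interval.
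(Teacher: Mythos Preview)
Your proof is correct and follows essentially the same route as the paper's: split $K(k)$ at a threshold $R$ (your $r_\delta$), pull back the short intervals by $g_k^{-1}$ to intervals $[0,\hat r]$ where the distortion of $g_k$ is close to $1$ and the $[0,\hat r]$-estimate applies, and handle the long intervals directly with \Cref{lem:duzeprzedzialy}. The only cosmetic differences are that the paper invokes \Cref{prop: uproszczony nieliniowy} together with \Cref{lem: distortion lemma additional} (rather than the already-combined \Cref{lem: estymacja na 0 r}) and obtains the slightly weaker distortion factor $\kappa^{2h_n}$ from \Cref{lem: distortion lemmas} instead of your $\kappa^{h_n}$.
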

\begin{proof}
Note that this proof is for fixed $k \in \N$. For all $r \in (0, |g_k([0,1])|]$ there exists a unique $\hat{r} \in (0,1]$ such that
\[
K(k,r) = g_k([0,\hat{r}]).
\]
By \Cref{lem: distortion lemmas}, we get
\[
\kappa^{-1}(g_k|_{[0,\hat{r}]})|g'_k(0)|\hat{r} \leq r \leq \kappa(g_k|_{[0,\hat{r}]})|g'_k(0)|\hat{r}.
\]
Hence
\[
\frac{m_n(K(k,r))}{|K(k,r)|^{h_n}}\leq \frac{\kappa^{h_n}(g_k|_{[0,\hat{r}]})|g'_k(0)|^{h_n}m_n([0,\hat{r}])}{\kappa^{-h_n}(g_k|_{[0,\hat{r}]})|g'_k(0)|^{h_n}\hat{r}^{h_n}} = \kappa^{2h_n}(g_k|_{[0,\hat{r}]}) \frac{m_n([0,\hat{r}])}{\hat{r}^{h_n}}
\]
Since $\hat{r} \to 0$ as $r \to 0$, then as a consequence of \Cref{prop: uproszczony nieliniowy} and \ref{lem: distortion lemma additional}, we get that for every $\varepsilon > 0$ there exists an integer $N$ and a radius $\hat{R} \in (0, 1]$ such that
\[
\kappa^{2h_n}(g_k|_{[0,\hat{r}]}) \frac{m_n([0,\hat{r}])}{\hat{r}^{h_n}} \leq 1 + \varepsilon
\]
for all $n \geq N$ and all $0< \hat{r} \leq \hat{R}$. There exists a unique $R \in (0,b_{k-1}- b_k)$ such that $[b_{k-1}-R, b_{k-1}] = g_k([0,\hat{R}])$.  Using \Cref{lem:duzeprzedzialy}, we get
\[
\frac{m_n(K(k,r))}{r^{h_n}} \leq
\]
\[
\max \left \{\sup_{r \in (0,R]} \left \{ \frac{m_n(K(k,r))}{r^{h_n}}\right \},\sup_{r \in (R,b_{k-1}]} \left \{ \frac{m_n(K(k,r))}{r^{h_n}}\right \} \right \} \leq  1 + \varepsilon
\]
the lemma follows.
\end{proof}
\begin{lemma}\label{lem: endpoint in g omega}
Fix an arbitrary $k \in \N$, let $E(k)$ be the collection of all closed intervals $\Delta \subset [0,1]$ having $b_{k}$ as one of its endpoints. Then for every $\varepsilon>0$ there exists $n_k \in \N$ such that for all $n \geq n_k$ and every $\Delta \in E(k)$
\[
 \frac{m_n(\Delta)}{|\Delta|^{h_n}}\leq 1 + \varepsilon.
\]    
\end{lemma}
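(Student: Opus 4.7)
The plan is to split $E(k)$ according to which endpoint of $\Delta$ equals $b_k$ and the size of $\Delta$. First, if $b_k$ is the right endpoint, then $\Delta = [b_k - r, b_k]$; since $b_k = b_{(k+1)-1}$, this means $\Delta = K(k+1, r) \in K(k+1)$, and Lemma~\ref{lem: fixed omega} applied at the \emph{fixed} index $k+1$ gives the bound for all sufficiently large $n$. Next, if $b_k$ is the left endpoint and $\Delta = [b_k, t]$ with $t \geq b_{k-1}$, then $|\Delta| \geq b_{k-1} - b_k > 0$ depends only on the fixed $k$, so Lemma~\ref{lem:duzeprzedzialy} applies directly.

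The remaining case is $\Delta = [b_k, t]$ with $t \in (b_k, b_{k-1})$, so $\Delta \subset g_k([0,1])$. Since $g_k$ is decreasing with $g_k(1) = b_k$, we have $g_k^{-1}(\Delta) = [\hat r, 1]$ for some $\hat r \in (0,1)$. By $h_n$-conformality of $m_n$ (see \eqref{eq: conformal measure}) combined with Lemma~\ref{lem: distortion lemmas},
\[
\frac{m_n(\Delta)}{|\Delta|^{h_n}} \leq \kappa(g_k|_{[\hat r,1]})^{2 h_n}\,\frac{m_n([\hat r,1])}{(1-\hat r)^{h_n}}.
\]
Fix a small threshold $\delta \in (0, 1 - b_1)$. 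If $1 - \hat r \geq \delta$, then $|\Delta| \geq \delta \inf_{[0,1]}|g_k'| > 0$, and we may invoke Lemma~\ref{lem:duzeprzedzialy} for $\Delta$ itself. Otherwise $\hat r > b_1$, so $[\hat r, 1] \subset g_1([0,1])$; pulling back once more via $g_1^{-1}$ (using $g_1(0) = 1$) yields $[0, \hat s]$ with $\hat s := g_1^{-1}(\hat r)$ small. A second application of conformality and Lemma~\ref{lem: distortion lemmas} gives
\[
\frac{m_n([\hat r,1])}{(1-\hat r)^{h_n}} \leq \kappa(g_1|_{[0,\hat s]})^{2 h_n}\,\frac{m_n([0,\hat s])}{\hat s^{h_n}},
\]
which is at most $1 + \varepsilon/4$ by Lemma~\ref{prop: uproszczony nieliniowy} for $\hat s$ small and $n$ large.

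The crucial point is that both distortion factors $\kappa(g_k|_{[\hat r,1]})$ and $\kappa(g_1|_{[0,\hat s]})$ tend to $1$ as $\delta \to 0$, by Lemma~\ref{lem: distortion lemma additional} applied to the single maps $g_k$ and $g_1$ on short intervals. Choosing $\delta$ small enough that both factors are at most $1 + \varepsilon/4$, and then $n_k$ beyond the thresholds supplied by Lemmas~\ref{lem:duzeprzedzialy}, \ref{prop: uproszczony nieliniowy}, and \ref{lem: fixed omega}, the combined estimate becomes $\frac{m_n(\Delta)}{|\Delta|^{h_n}} \leq 1 + \varepsilon$. The main obstacle is preventing the two successive pull-back distortions from compounding uncontrollably; this is handled by routing every ``not-too-small'' case to Lemma~\ref{lem:duzeprzedzialy}, so that actual pull-backs occur only on short intervals where Lemma~\ref{lem: distortion lemma additional} forces distortions arbitrarily close to $1$.
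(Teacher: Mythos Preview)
Your proof is correct and follows essentially the same route as the paper. The only difference is cosmetic: for the left-endpoint case with $\Delta\subset g_k([0,1])$, the paper pulls back once via $g_k$ to an interval $[1-R,1]\in K(1)$ and then simply cites Lemma~\ref{lem: fixed omega} at the fixed index $1$ (which already covers all $R\in(0,1)$), whereas you perform that lemma's internal step explicitly by pulling back a second time through $g_1$ to reach $[0,\hat s]$ and invoking Lemma~\ref{prop: uproszczony nieliniowy} directly. Both arguments then finish identically, routing all ``not-too-short'' intervals to Lemma~\ref{lem:duzeprzedzialy} and controlling the pull-back distortions on short intervals via Lemma~\ref{lem: distortion lemma additional}.
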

\begin{proof}
In view of \Cref{lem: fixed omega}, it is enough to consider intervals of the form $\Delta = [b_{k}, b_{k} +r]$, $k \in \N$. If $r \in (0,b_{k-1}-b_k)$, then $\Delta = g_k([1-R,1])$ for some $R:= R(r,k) \in (0,1)$.
\par Now, each interval $[1-R, 1]$ is of the form $[b_0-R, b_0]$, i.e. it is in the family $K(1)$ considered in \Cref{lem: fixed omega}. We know from \Cref{lem: fixed omega} that for all $n\geq n_1^-$, $R \in (0,1)$
\[
\frac{m_n([1-R,1])}{|[1-R,1]|^{h_n}} < 1 + \frac{\varepsilon}{4}.
\]
Using \Cref{lem: distortion lemma additional} for the map $g_1$ and \Cref{lem: distortion lemmas} we can find $r_k>0$ such that for every interval $\Delta$ of the form $\Delta = [b_k, b_k+r]$, $r \leq r_k$ and every $n \geq n_1^-$
\[
\frac{m_n(\Delta)}{|\Delta|^{h_n}} < \kappa^{2h_n}(g_1|_{[1-R,1]}) \frac{m_n([1-R,1])}{R^{h_n}} \leq (1 + \frac{\varepsilon}{4})^2 \leq 1 + \varepsilon.
\]
Using now \Cref{lem:duzeprzedzialy} for $\delta := r_k$, we find $n_0 = n_0(r_k)$ such that for $n \geq n_0$ and every interval $\Delta$ of length $|\Delta| > \delta$
\[
\frac{m_n(\Delta)}{|\Delta|^{h_n}} < 1 + \varepsilon. 
\]
Put $n_k^+ = \max \{ n_1^-, n_0\}$. Finally, putting $n_k = \max \{ n_k^-, n_k^+\}$ concludes the proof.
\end{proof}
\begin{lemma}\label{lem: suma N przyleglych przedzialow}
Let $\Delta_1,\Delta_2, \dots \Delta_N \subset [0,1]$ be closed intervals for $j = 1\dots N$ such that $\Delta_{j+1}$ is adjacent to $\Delta_j$ in a way that left endpoint of $\Delta_{j+1}$ an the right endpoint of $\Delta_{j}$ coincide. Put $\Delta = \Delta_1\cup \Delta_2 \cup \dots \cup \Delta_N $. Then
\[
\frac{m_n(\Delta)}{|\Delta|^{h_n}}\leq N^{1-h_n}\max\limits_{1 \leq j \leq N} \frac{m_{n}(\Delta_j)}{|\Delta_j|^{h_n}}.
\]
\end{lemma}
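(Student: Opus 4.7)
The plan is to apply Jensen's inequality (\Cref{lem: oszacowanie na sume wi}) to the weights $w_j := |\Delta_j|/|\Delta|$, after using additivity of $m_n$ across the adjacent intervals.

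First, I would observe that since the intervals $\Delta_j$ meet only at single endpoints, and $m_n$ is a finite Borel measure with no atoms (as the normalization of the Hausdorff measure of a perfect limit set of positive dimension), we have
\[
m_n(\Delta) \;=\; \sum_{j=1}^{N} m_n(\Delta_j) \qquad\text{and}\qquad |\Delta| \;=\; \sum_{j=1}^{N} |\Delta_j|.
\]
Setting $M := \max_{1\le j \le N} \frac{m_n(\Delta_j)}{|\Delta_j|^{h_n}}$, the defining inequality yields $m_n(\Delta_j) \le M\,|\Delta_j|^{h_n}$ for every $j$, so
\[
m_n(\Delta) \;\le\; M \sum_{j=1}^{N} |\Delta_j|^{h_n}.
\]

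Next, I would factor out $|\Delta|^{h_n}$ by introducing the weights $w_j := |\Delta_j|/|\Delta|$, which satisfy $w_j > 0$ and $\sum_{j=1}^N w_j = 1$. This gives
\[
\sum_{j=1}^{N} |\Delta_j|^{h_n} \;=\; |\Delta|^{h_n}\sum_{j=1}^{N} w_j^{h_n}.
\]
Since $0 < h_n < 1$, \Cref{lem: oszacowanie na sume wi} (the stated Jensen-type bound) applies and yields $\sum_{j=1}^{N} w_j^{h_n} \le N^{1-h_n}$.

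Combining these two displays gives $m_n(\Delta) \le N^{1-h_n}\,M\,|\Delta|^{h_n}$, which is exactly the claimed inequality after dividing by $|\Delta|^{h_n}$. There is no real obstacle here: the argument is a two-line application of Jensen to weights, and the only point requiring a moment of care is the additivity of $m_n$ on the adjacent pieces, handled by the observation that the overlap consists of at most $N-1$ points and is thus $m_n$-null.
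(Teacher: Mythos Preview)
Your argument is correct and is essentially the same as the paper's: both bound $m_n(\Delta)$ by $\max_j \frac{m_n(\Delta_j)}{|\Delta_j|^{h_n}}\cdot\sum_j|\Delta_j|^{h_n}$ and then apply \Cref{lem: oszacowanie na sume wi} to the weights $w_j=|\Delta_j|/|\Delta|$. A tiny remark: the no-atoms discussion is not needed, since subadditivity alone gives $m_n(\Delta)\le\sum_j m_n(\Delta_j)$, which is all the proof uses.
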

\begin{proof}
Notice that
\[
\frac{m_n(\Delta)}{|\Delta|^{h_n}} \leq \frac{\sum \limits_{i = 1}^{N} m_n(\Delta_i)}{\sum \limits_{i = 1}^{N}|\Delta_i|^{h_n}} \cdot \frac{\sum \limits_{i = 1}^{N}|\Delta_i|^{h_n}}{|\Delta_1 \cup \dots \cup \Delta_N|^{h_n}} \leq N^{1-h_n}\max\limits_{1 \leq j \leq N} \frac{m_{n}(\Delta_j)}{|\Delta_j|^{h_n}}
\]
where the last inequality comes from the fact that 
\[
\frac{\sum \limits_{i = 1}^{N} m_n(\Delta_i)}{\sum \limits_{i = 1}^{N}|\Delta_i|^{h_n}} \leq \max\limits_{1 \leq j \leq N} \frac{m_{n}(\Delta_j)}{|\Delta_j|^{h_n}}
\]
 and 
\[
\frac{\sum \limits_{i = 1}^{N}|\Delta_i|^{h_n}}{|\Delta_1 \cup \dots \cup \Delta_N|^{h_n}} \leq N^{1-h_n}.
\]
\end{proof}

\begin{lemma}\label{lem: zawierajace bk}
Fix $k \in \N$. Let $E^*(k)$ be the collection of all closed intervals in $[0,1]$ containing the point $b_k$. Then, for every $\varepsilon>0$ there exists $n_k^* \in \N$ such that for every $n \geq n_k^*$ and every $\Delta \in E^*(k)$ 
\[
\frac{m_n(\Delta)}{|\Delta|^{h_n}} \leq 1 + \varepsilon.
\]
\end{lemma}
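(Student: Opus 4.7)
The plan is to decompose any interval $\Delta = [s,t] \in E^*(k)$ at the point $b_k$ into two pieces that each have $b_k$ as an endpoint, and then apply the already-established \Cref{lem: endpoint in g omega} together with the additivity bound from \Cref{lem: suma N przyleglych przedzialow}.

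More precisely, fix $\varepsilon > 0$. Given $\Delta = [s,t]$ with $s \leq b_k \leq t$, write $\Delta_1 = [s, b_k]$ and $\Delta_2 = [b_k, t]$. Both $\Delta_1$ and $\Delta_2$ belong to the family $E(k)$ (each has $b_k$ as one of its endpoints), so by \Cref{lem: endpoint in g omega} there exists $n_k \in \N$ such that for every $n \geq n_k$ one has
\[
\frac{m_n(\Delta_i)}{|\Delta_i|^{h_n}} \leq 1 + \tfrac{\varepsilon}{2} \qquad (i = 1, 2).
\]
Applying \Cref{lem: suma N przyleglych przedzialow} with $N=2$ then yields
\[
\frac{m_n(\Delta)}{|\Delta|^{h_n}} \leq 2^{1-h_n} \max_{i=1,2} \frac{m_n(\Delta_i)}{|\Delta_i|^{h_n}} \leq 2^{1-h_n}\bigl(1 + \tfrac{\varepsilon}{2}\bigr).
\]

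Finally, since $h_n \to 1$ by Condition \eqref{warunek dodatkowy 3}, we may choose $n_k^* \geq n_k$ large enough so that $2^{1-h_n} \leq 1 + \varepsilon/3$ for all $n \geq n_k^*$. Then for every such $n$ and every $\Delta \in E^*(k)$, the product above is bounded by $1 + \varepsilon$, which is exactly the claim. The proof is essentially a bookkeeping exercise building on the previously established estimates, so no serious obstacle is expected; the only delicate point is to notice that the edge cases $s = b_k$ or $t = b_k$ collapse one of the two sub-intervals to a point, but then $\Delta \in E(k)$ and \Cref{lem: endpoint in g omega} applies directly.
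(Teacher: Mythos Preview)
Your proof is correct and follows exactly the same approach as the paper, which simply states that the lemma follows immediately from \Cref{lem: endpoint in g omega} and \Cref{lem: suma N przyleglych przedzialow} with $N=2$. You have merely spelled out the details of that one-line argument.
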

\begin{proof}
Proof follows immediately from \Cref{lem: endpoint in g omega} and \Cref{lem: suma N przyleglych przedzialow} for $N = 2$.
\end{proof}
Let $L_k$ be the family of intervals $[b_{k-1} - r, b_{k-1}]$, $r \in [0, b_{k-1}-b_k]$, $k \in \N$. Note that $L_k \subset K(k)$. Recall that in \Cref{lem: fixed omega} we obtained estimates for the family $K(k)$ with fixed $k$. \Cref{lem: sk-} below, gives in turn similar estimate, but now these are uniform estimates for all families $L_k$.

\begin{lemma}\label{lem: sk-}[Estimates for the intervals in the family $L$]
For every $\varepsilon>0 $ there exists $\hat{n} \in \N$ such that for every $n>\hat{n}$
\[
\frac{m_n(\Delta)}{|\Delta|^{h_n}} \leq 1 + \varepsilon
\]
for each $ \Delta \in L :=  \bigcup\limits_{k = 1}^{\infty}L_k$.
\end{lemma}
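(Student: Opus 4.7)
The plan is to split the supremum over $k \in \N$ into a finite ``head'' $k < K_0$, handled by finitely many applications of \Cref{lem: fixed omega}, and a ``tail'' $k \ge K_0$, where condition \eqref{eq: warunek 5} makes the distortion $\kappa(g_k)$ uniformly close to $1$, so that the proof of \Cref{lem: fixed omega} can be rerun with constants that do not depend on $k$. Throughout, the crucial fact is that \Cref{lem: estymacja na 0 r} already provides a bound on $m_n([0,r])/r^{h_n}$ that is uniform in $r \in (0,1]$, so once a $\Delta \in L_k$ is pulled back by $g_k^{-1}$ to an interval of the form $[0,\hat r]$, we only have to control the loss coming from the distortion of $g_k$.

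First I would fix $\varepsilon > 0$ and, by \eqref{eq: warunek 5}, choose $K_0 \in \N$ such that $\kappa(g_k) \le 1 + \varepsilon/8$ for every $k \ge K_0$. For each $k \in \{1,\dots,K_0-1\}$, \Cref{lem: fixed omega} (applied with $\varepsilon$) yields a threshold $n_k^-$ such that $m_n(\Delta)/|\Delta|^{h_n} \le 1+\varepsilon$ whenever $n \ge n_k^-$ and $\Delta \in K(k) \supset L_k$. Setting $N_1 := \max_{k < K_0} n_k^-$ then handles the entire head $L_1 \cup \dots \cup L_{K_0-1}$ uniformly.

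For the tail $k \ge K_0$, I would use that every $\Delta = [b_{k-1}-r,b_{k-1}] \in L_k$ satisfies $r \le b_{k-1}-b_k$, hence $\Delta \subset g_k([0,1])$ and $\Delta = g_k([0,\hat r])$ for a unique $\hat r \in (0,1]$. Combining the conformality of $m_n$ with \Cref{lem: distortion lemmas} (exactly as in the proof of \Cref{lem: fixed omega}) yields
\[
\frac{m_n(\Delta)}{|\Delta|^{h_n}} \;\le\; \kappa^{2h_n}(g_k) \,\frac{m_n([0,\hat r])}{\hat r^{h_n}}.
\]
By \Cref{lem: estymacja na 0 r} there exists $N_2$ such that, for every $n \ge N_2$ and every $\hat r \in (0,1]$, the last factor is at most $1 + \varepsilon/4$; simultaneously $\kappa(g_k)^{2h_n} \le (1+\varepsilon/8)^{2}$ as soon as $k \ge K_0$. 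The product is bounded by $1 + \varepsilon$ for $n$ large enough, and putting $\hat n := \max(N_1, N_2)$ completes the argument.

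The main obstacle is precisely the $k$-uniformity: the threshold $n_k^-$ produced by \Cref{lem: fixed omega} depends on $k$ through the distortion of $g_k$ on small neighborhoods of $0$, and a priori there is no reason this should stay bounded as $k \to \infty$. The rescue is twofold: condition \eqref{eq: warunek 5} kills the distortion factor $\kappa^{2h_n}(g_k)$ uniformly once $k$ is large enough, while the $r$-uniform statement of \Cref{lem: estymacja na 0 r} (which itself was obtained by combining \Cref{prop: uproszczony nieliniowy} for small $r$ with \Cref{lem:duzeprzedzialy} for large $r$) means the pulled-back estimate on $[0,\hat r]$ does not require any additional smallness on $\hat r$ dictated by $k$.
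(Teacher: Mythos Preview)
Your argument is correct and essentially identical to the paper's own proof: split into a finite head $k<K_0$ handled by finitely many applications of a fixed-$k$ lemma, and a tail $k\ge K_0$ where condition~\eqref{eq: warunek 5} forces $\kappa(g_k)$ close to $1$ so that the distortion inequality combined with the $r$-uniform bound of \Cref{lem: estymacja na 0 r} gives the result. The only cosmetic difference is that for the head the paper cites \Cref{lem: endpoint in g omega} instead of \Cref{lem: fixed omega}, but since $L_k\subset K(k)$ and the intervals in $L_k$ also have $b_{k-1}$ as an endpoint, either reference suffices.
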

\begin{proof}
Fix $\varepsilon>0$. We can use the same argument as in the beginning of the proof of \Cref{lem: fixed omega} and get
\[
\frac{m_n\left( \left[b_{k-1} -r, b_k \right]\right)}{r^{h_n}} \leq \kappa^{2}(g_k|_{[0,\hat{r}]}) \frac{m_n([0, \hat{r}])}{\hat{r}^{h_n}} \leq \kappa^{2}(g_k) \frac{m_n([0, \hat{r}])}{\hat{r}^{h_n}} 
\]
for all $r \in [0, b_{k-1} - b_k]$. Now, from Condition \eqref{eq: warunek 5} there exists $k_0 \in \N$ such that for all $k \geq k_0$ we have $\kappa^{2}(g_k) \leq 1 + \frac{\varepsilon}{4}$. From \Cref{lem: estymacja na 0 r} there exists $n_0\in \N$  such that for all $n> n_0$ and all $\hat{r} \in (0,1)$
\[
\frac{m_n([0, \hat{r}])}{\hat{r}^{h_n}} \leq 1 + \frac{\varepsilon}{4}.
\]
Now, for each $k \leq k_0$, by \Cref{lem: endpoint in g omega}, there exists $n_k^- \in \N$ such that for all $n \geq n_k^-$
\[
\frac{m_n\left( \left[b_{k-1} -r, b_k \right]\right)}{r^{h_n}} \leq 1 + \varepsilon.
\] 
Now, setting $\hat{n} = \max\{n_0, n_1^-, \dots n_{k_0}^- \} $ ends the proof.
\end{proof}
Analogously, define $P_k$ as
\[
\left [ b_{k}, b_{k}+r\right] , r \in \left[ 0, b_{k-1} - b_{k}\right] .
\]
Then the following holds.
\begin{lemma}\label{lem: sk+}[Estimates for the intervals in the family $P$]
For every $\varepsilon>0 $ there exists $n_0 \in \N$ such that for every $n>n_0$
\[
\frac{m_n(\Delta)}{|\Delta|^{h_n}} \leq 1 + \varepsilon
\]    
for each $\Delta \in P := \bigcup\limits_{k = 1}^{\infty} P_k$.
\end{lemma}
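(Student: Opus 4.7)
The plan is to mimic the structure of \Cref{lem: sk-}, exchanging the role of the endpoint $0$ with the endpoint $1$. For a fixed $k$, every $\Delta=[b_k,b_k+r]\in P_k$ has a unique preimage under $g_k$, namely $\Delta=g_k([1-\hat r,1])$ where $\hat r\in(0,1]$ is determined by $g_k(1-\hat r)=b_k+r$. The key observation is that $[1-\hat r,1]=[b_0-\hat r,b_0]$ belongs to the family $K(1)$, on which a uniform density bound has already been established in \Cref{lem: fixed omega} (applied with fixed $k=1$).

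Concretely, I would first combine the conformality of $m_n$ with the distortion estimate in \Cref{lem: distortion lemmas} in exactly the way it was used at the start of the proof of \Cref{lem: sk-}, to obtain
\[
\frac{m_n(\Delta)}{|\Delta|^{h_n}}\;\leq\;\kappa^{2h_n}(g_k)\,\frac{m_n([1-\hat r,1])}{\hat r^{h_n}}.
\]
Then fix $\varepsilon>0$. By Condition~\eqref{eq: warunek 5} pick $k_0$ so that $\kappa^2(g_k)<1+\varepsilon/4$ for all $k\geq k_0$. By \Cref{lem: fixed omega} applied to $k=1$, pick $n_1^{-}$ so that $m_n(J)/|J|^{h_n}\leq 1+\varepsilon/4$ for every $J\in K(1)$ and every $n\geq n_1^{-}$. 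Combined, these give $\frac{m_n(\Delta)}{|\Delta|^{h_n}}\leq(1+\varepsilon/4)^2\leq 1+\varepsilon$ for all $\Delta\in P_k$ with $k\geq k_0$ and $n\geq n_1^{-}$.

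For the remaining finitely many indices $k<k_0$, I would invoke \Cref{lem: endpoint in g omega}: each $P_k$ is contained in the family $E(k)$ of intervals having $b_k$ as an endpoint, so this lemma directly supplies integers $n_1,\dots,n_{k_0-1}$ such that the density is bounded by $1+\varepsilon$ on $P_k$ for $n\geq n_k$. Setting $n_0:=\max\{n_1^{-},n_1,\dots,n_{k_0-1}\}$ then yields the desired uniform estimate for every $\Delta\in P=\bigcup_{k\geq 1}P_k$ and every $n\geq n_0$.

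I do not expect a genuine obstacle here; the routine verification is the distortion step, and the only conceptual ingredient is the observation that the preimages $[1-\hat r,1]$ are all trapped inside the single family $K(1)$, so that one application of \Cref{lem: fixed omega} (for the fixed value $k=1$) suffices and we never need to appeal to a uniform-in-$k$ estimate for intervals of the form $[1-r,1]$.
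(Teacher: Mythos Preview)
Your proposal is correct and follows essentially the same approach as the paper. The paper's proof is terse---it just writes the distortion inequality $\frac{m_n([b_k,b_k+r])}{r^{h_n}}\le\kappa^2(g_k)\,\frac{m_n([1-\hat r,1])}{\hat r^{h_n}}$ and then says ``recalling Condition~\eqref{eq: warunek 5} and using the same argument as in the proof of \Cref{lem: sk-}''---but your unpacking (use \Cref{lem: fixed omega} with $k=1$ for the preimage densities, Condition~\eqref{eq: warunek 5} for large $k$, and \Cref{lem: endpoint in g omega} for the finitely many $k<k_0$) is exactly what that phrase encodes; indeed the observation that $[1-\hat r,1]\in K(1)$ already appears verbatim in the proof of \Cref{lem: endpoint in g omega}.
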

\begin{proof}
Observe that for each $r \in \left[ 0, b_{k-1}- b_{k}\right] $ there exists $ \hat{r} \in \left [0,1 \right] $ such that $\left [ b_{k}, b_{k}+r\right] = g_k([1-\hat{r}, 1])$. In the same manner as in previous proof, we get the following
\[
\frac{m_n(\left [ b_{k}, b_{k}+r\right])}{r^{h_n}} \leq \kappa^{2}(g_k|_{[1-\hat{r}, 1]}) \frac{m_n([1-\hat{r},1])}{\hat{r}^{h_n}} \leq \kappa^{2}(g_k) \frac{m_n([1-\hat{r},1])}{\hat{r}^{h_n}}.
\]
Now recalling Condition \eqref{eq: warunek 5} and using the same argument as in proof of  \Cref{lem: sk-}, ends the proof.
\end{proof}
For every $k \geq 0 $, let
\[
S_k = \left \{ \left [ b_{l+q}, b_l \right]: k \leq l, q \geq 1 \right \}.
\]
We shall prove the following
\begin{lemma}\label{lem: mk+}
\[
\limsup \limits_{\substack{n \to \infty }} \sup \left\{\frac{m_n(\Delta)}{|\Delta|^{h_n}}: \Delta \in S := \bigcup \limits_{k = 0}^{\infty} S_k \right\} \leq 1
\]
\end{lemma}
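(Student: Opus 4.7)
The plan is to reduce $\Delta = [b_{l+q}, b_l]$ to the case $l+q \le n$: for $l > n$ one has $m_n(\Delta) = 0$, and for $l \le n < l+q$ the measure $m_n$ is supported in $[b_n, 1]$, so $m_n(\Delta) = m_n([b_n, b_l])$ while $|[b_n, b_l]| < |\Delta|$, which only increases the density. Having made this reduction, I will decompose
\[
m_n(\Delta) = \sum_{j=l+1}^{l+q} m_n(g_j([0,1])),
\]
and use the $h_n$-conformality of $m_n$ together with \Cref{lem: distortion lemmas} to estimate
\[
m_n(g_j([0,1])) = \int_0^1 |g_j'|^{h_n}\,dm_n \;\le\; \|g_j'\|_\infty^{h_n} \;\le\; \kappa(g_j)^{h_n}(b_{j-1}-b_j)^{h_n}.
\]
Since $|\Delta| = \sum_{j=l+1}^{l+q}(b_{j-1}-b_j)$, introducing the probability weights $w_j = (b_{j-1}-b_j)/|\Delta|$ yields
\[
\frac{m_n(\Delta)}{|\Delta|^{h_n}} \;\le\; \Bigl(\max_{j \ge l+1}\kappa(g_j)\Bigr)^{h_n}\sum_{j=l+1}^{l+q} w_j^{h_n}.
\]

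Next, fixing $\varepsilon > 0$, I will use \eqref{eq: warunek 5} to pick $k_0$ such that $\kappa(g_j) \le 1+\varepsilon/4$ for every $j > k_0$, and then split according to the size of $l$. If $l \ge k_0$, the distortion factor is bounded by $(1+\varepsilon/4)^{h_n}$, and \Cref{lem: oszacowanie na sume wi} gives $\sum w_j^{h_n} \le q^{1-h_n} \le n^{1-h_n}$; by \eqref{warunek dodatkowy 3} this converges to $1$, so for large $n$ the density is at most $1+\varepsilon$. If instead $l < k_0$, then $|\Delta| \ge b_l - b_{l+1} \ge \delta := \min_{0 \le l < k_0}(b_l - b_{l+1}) > 0$, a minimum over finitely many positive numbers, and \Cref{lem:duzeprzedzialy} applied to this fixed $\delta$ produces the density bound $1+\varepsilon$ for all $n$ large enough. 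Taking the larger of the two thresholds finishes the argument.

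The main obstacle is that $\kappa(g_j)$ is not uniformly close to $1$ for small $j$, so the conformality-plus-distortion estimate cannot be used uniformly across all intervals in $S$. That is exactly what forces the split into cases: short intervals necessarily live in the regime $l \ge k_0$ where distortion is controlled, while the finitely many "bad" values of $l$ correspond to intervals whose lengths are bounded below, where \Cref{lem:duzeprzedzialy} takes over. The interplay between the distortion factor and the Jensen-type factor $n^{1-h_n}$ — the latter being precisely what hypothesis \eqref{warunek dodatkowy 3} controls — is what closes the estimate.
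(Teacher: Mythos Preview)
Your proposal is correct and follows essentially the same approach as the paper: the same reduction to $l+q\le n$, the same conformality-plus-distortion bound $m_n(g_j([0,1]))\le\kappa(g_j)^{h_n}(b_{j-1}-b_j)^{h_n}$, the same split at a threshold $k_0$ coming from \eqref{eq: warunek 5}, the Jensen-type estimate $\sum w_j^{h_n}\le q^{1-h_n}\le n^{1-h_n}$ controlled by \eqref{warunek dodatkowy 3} for $l\ge k_0$, and the appeal to \Cref{lem:duzeprzedzialy} with $\delta=\min_{0\le l<k_0}(b_l-b_{l+1})$ for $l<k_0$. Your reduction step is in fact spelled out more carefully than in the paper, which simply asserts that $m_n([0,b_n])=0$ suffices.
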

\begin{proof}

Since $m_n([0,b_{n}]) = 0$ we want to show that
\[
\limsup \limits_{\substack{n \to \infty}} \sup \left \{\frac{m_n(\left [ b_{l+q}, b_l \right])}{|\left [ b_{l+q}, b_l \right]|^{h_n}}:q \geq 1, 0 \leq l \leq l+q \leq n \right\} \leq 1.
\]
Fix $\varepsilon > 0$. Fix $k_0\in \N$ and $n_0 \in \N$ large enough so that by Condition \eqref{eq: warunek 5} $\kappa(g_j) \leq 1 + \frac{\varepsilon}{4}$ for all $j \geq k_0$. Fix some $l \geq k_0$, $q \geq 1$. Then we have
\begin{equation}\label{eq: mn w mk+}
m_n(\left [ b_{l+q}, b_l \right])  \leq  \sum\limits_{j = l+1}^{l+q} ||g_j'||_{\infty}^{h_n} m_n([0,1]) \leq 
\end{equation}
\[
\leq \sum\limits_{j = l+1}^{l+q} \kappa(g_j)^{h_n} \left( b_{j-1} - b_{j}\right)^{h_n} \leq  \sum\limits_{j = l+1}^{l+q} (1+ \frac{\varepsilon}{4})^{h_n} \left( b_{j-1} - b_{j}\right)^{h_n}.
\]
This gives us the following estimate
\[
\frac{m_n([b_{l+q},b_{l}])}{(b_{l}-b_{l+q})^{h_n}} \leq \frac{\sum\limits_{j = l+1}^{l+q} (1+ \frac{\varepsilon}{4})^{h_n} \left( b_{j-1} - b_{j}\right)^{h_n}}{(b_{l}-b_{l+q})^{h_n}}=(1+ \frac{\varepsilon}{4})^{h_n}\frac{\sum \limits_{j = l+1}^{l+q} (b_{j-1}-b_{j})^{h_n}}{\left(\sum \limits_{j = l+1}^{l+q}\left( b_{j-1}-b_{j}\right)\right)^{h_n}} =
\]
\[
= (1+ \frac{\varepsilon}{4})^{h_n} \sum \limits_{j = l+1}^{l+q} \left (\frac{b_{j-1}-b_{j}}{\sum \limits_{j = l+1}^{l+q} \left (b_{j-1}-b_{j}\right)}\right )^{h_n} = (1+ \frac{\varepsilon}{4})^{h_n} \sum \limits_{j = l+1}^{l+q} w_j^{h_n},
\]
 where 
\[
w_j = \frac{\left(b_{j-1}-b_{j}\right)}{\sum \limits_{j = l+1}^{l+q} \left(b_{j-1}-b_{j} \right)} 
\]
and $\sum \limits_{j = l+1}^{l+q} w_j = 1$ . Now, using \Cref{lem: oszacowanie na sume wi}, we get
\[
\sum \limits_{j = l+1}^{l+q} w_j^{h_n} \leq \sum \limits_{j = l+1}^{l+q} \left ( \frac{1}{q} \right )^{h_n} = q \left ( \frac{1}{q} \right )^{h_n} =
\]
\[
= \left(q \right )^{1-h_n} \leq n^{1-h_n} \leq 1 + \frac{\varepsilon}{2} 
\]
for all $n \geq n_0$ and $k \geq k_0$. Applying Condition \eqref{warunek dodatkowy 3} in the last inequality yields result for $k \geq k_0$. For $k < k_0$ notice that all of the intervals have length of at least $\min\limits_{j = 1, \dots, k_0}|b_{j-1}- b_{j}|$, thus invoking \Cref{lem:duzeprzedzialy} we get $n_1 \in \N$ such that for all $n \geq n_1$ 
\[
m_n(\left [ b_{l+q}, b_l \right])  \leq 1 + \varepsilon.
\]
Setting $\hat{n} = \max \{n_0, n_1 \}$ allows for the inequality 
\[
\frac{m_n(\Delta)}{|\Delta|^{h_n}} \leq 1 + \varepsilon
\]
for all $n\geq n_0$ and $\Delta \in S$, which ends the proof.
\end{proof}
Set $C$ as the family of all closed subintervals of $[0,1]$ intersecting the set $\{b_j: j \in \{0, 1 \dots\}\}$. Then, based on \Cref{lem: sk-}, \Cref{lem: sk+}, \Cref{lem: mk+} and \Cref{lem: suma N przyleglych przedzialow} for $N = 3$ we get the following lemma.
\begin{lemma}\label{lem: ekstremalnosc n}[Estimates for the intervals in the family $C$]
For every $\varepsilon >0$ there exists $n_0 \in \N$ such that for each $n \geq n_0$ we have
\[
\frac{m_n(\Delta)}{|\Delta|^{h_n}}\leq 1 + \varepsilon
\]
for every $\Delta \in C$.
\end{lemma}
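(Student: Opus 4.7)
The plan is to reduce to the three families already handled by decomposing any $\Delta \in C$ into at most three adjacent pieces, each of which lies in one of the families $L$, $S$, or $P$, and then applying \Cref{lem: suma N przyleglych przedzialow} with $N=3$. Fix $\varepsilon>0$ and let $\Delta = [s,t] \in C$. Let $l$ be the smallest index with $b_l \in \Delta$ and let $L = l+q$ (with $q\ge 0$) be the largest. By minimality of $l$, we have $t \le b_{l-1}$ (when $l\ge 1$), and by maximality of $L$, we have $s \ge b_{L+1}$.

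Accordingly, I would write
\[
\Delta = [s, b_L] \cup [b_L, b_l] \cup [b_l, t],
\]
where the middle piece degenerates to a point when $q=0$ and the outer pieces degenerate to points when $s=b_L$ or $t=b_l$, respectively. The identification of each piece with the families from the preceding lemmas is straightforward:
\begin{itemize}
\item $[s, b_L] = [b_L - (b_L - s), b_L]$ with $b_L - s \le b_L - b_{L+1}$, so $[s,b_L] \in L_{L+1} \subset L$;
\item $[b_L, b_l] \in S$ (as soon as $q\ge 1$; otherwise the piece is trivial);
\item $[b_l, t] = [b_l, b_l + (t-b_l)]$ with $t - b_l \le b_{l-1} - b_l$, so $[b_l, t] \in P_l \subset P$.
\end{itemize}

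By \Cref{lem: sk-}, \Cref{lem: sk+}, and \Cref{lem: mk+}, choose $n_0 \in \N$ such that for every $n\ge n_0$ each of the three density quotients is at most $1 + \varepsilon/2$, uniformly over the respective families. Degenerate pieces contribute nothing and may be ignored. Applying \Cref{lem: suma N przyleglych przedzialow} with $N = 3$ then gives
\[
\frac{m_n(\Delta)}{|\Delta|^{h_n}} \le 3^{1-h_n}\cdot\Bigl(1 + \tfrac{\varepsilon}{2}\Bigr).
\]
Since $h_n \to 1$, enlarging $n_0$ if necessary we may assume $3^{1-h_n} \le 1 + \varepsilon/4$ for $n\ge n_0$, whence the right-hand side is at most $1+\varepsilon$, as required.

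No serious obstacle is expected: once the three-piece decomposition and the reduction to the preceding three lemmas are set up correctly, the only bookkeeping is to handle the degenerate cases (when $l=L$, or when $s=b_L$ or $t=b_l$), which amounts to applying \Cref{lem: suma N przyleglych przedzialow} with $N\in\{1,2\}$ instead of $N=3$; this can only improve the constant $3^{1-h_n}$. The slight awkwardness at the boundary $l = 0$ (where the right piece is trivial since $t \le b_0 = 1$) fits in the same framework.
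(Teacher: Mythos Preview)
Your proof is correct and follows essentially the same approach as the paper: decompose $\Delta\in C$ into three adjacent pieces lying in $L$, $S$, and $P$ respectively, apply \Cref{lem: sk-}, \Cref{lem: sk+}, \Cref{lem: mk+}, and conclude via \Cref{lem: suma N przyleglych przedzialow} with $N=3$ together with $3^{1-h_n}\to 1$. Your write-up is in fact slightly more explicit than the paper's about why each piece lands in the correct family and about the degenerate cases; the only cosmetic blemish is using the symbol $L$ both for the family of intervals and for the index $l+q$.
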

\begin{proof}
Fix $\varepsilon>0$. By \Cref{lem: sk-}, we know that there exists $n_1 \in \N$ such that for every $n > n_1 $ 
\[
\frac{m_n(\Delta_1)}{|\Delta_1|^{h_n}}\leq 1 + \frac{\varepsilon}{4}
\]
for every $\Delta_1 \in L$. By \Cref{lem: sk+} there exists $n_2 \in \N$ such that for every $n > n_2 $ 
\[
\frac{m_n(\Delta_2)}{|\Delta_2|^{h_n}}\leq 1 + \frac{\varepsilon}{4}
\]
for every $\Delta_2 \in P$. From \Cref{lem: mk+}, there exists $n_3 \in \N$ such that for every $n > n_3 $ 
\[
\frac{m_n(\Delta_3)}{|\Delta_3|^{h_n}}\leq 1 + \frac{\varepsilon}{4}
\]
for every $\Delta_3 \in S$. Now, from \Cref{lem: suma N przyleglych przedzialow} and from the fact that for every $\Delta \in C$, $\Delta = \Delta_1 \cup \Delta_2 \cup\Delta_3$, where $\Delta_1 \in L$, $\Delta_2 \in S$ and $\Delta_3 \in P$, we have
\[
\frac{m_n(\Delta)}{|\Delta|^{h_n}} \leq 3^{1-h_n} \cdot (1 + \frac{\varepsilon}{4}).
\]
Setting $\hat{n} \in \N$ large enough such that $3^{1-h_n} < (1 + \frac{\varepsilon}{4})$ and taking $n_0 = \max\{\hat{n}, n_1, n_2, n_3 \}$ concludes the proof.
\end{proof}
This leads us to our final proposition.
\begin{prop}\label{prop: final from above}
For every $\varepsilon > 0$ there exists $n_0 \in \N$ such that for every $n > n_0$ and $\Delta \in \mathcal{W}$, we have
\[
\frac{m_n(\Delta)}{|\Delta|^{h_n}} \leq 1 + \varepsilon
\]
where $\mathcal{W}$ is the family of all closed intervals in $[0,1]$.
\end{prop}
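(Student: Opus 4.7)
The plan is to reduce each closed interval $\Delta\subset[0,1]$ to one already handled by \Cref{lem: ekstremalnosc n} (intervals in the family $C$) or \Cref{lem:duzeprzedzialy} (intervals of diameter bounded from below), and to control the resulting distortion via \Cref{lem: distortion lemma additional}. Given $\varepsilon>0$, I first pick $K_0\in\N$ with $\kappa(g_k)\leq 1+\varepsilon/5$ for every $k>K_0$, available by Condition~\eqref{eq: warunek 5}, then set $\eta_0:=\min_{k\leq K_0}(b_{k-1}-b_k)>0$ and $K':=\max_{k\leq K_0}\kappa(g_k)<\infty$, and finally choose $\delta>0$ so small that \Cref{lem: distortion lemma additional} gives $\kappa(g_\omega|_I)\leq 1+\varepsilon/5$ for every $\omega\in\N^*$ and every subinterval $I\subset[0,1]$ of diameter at most $\max(\delta,\delta K'/\eta_0)$. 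When $|\Delta|\geq \delta$ the estimate is \Cref{lem:duzeprzedzialy}, and when $|\Delta|<\delta$ with $\Delta\in C$ it is \Cref{lem: ekstremalnosc n}; neither invocation uses the distortion machinery.

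The remaining case is $|\Delta|<\delta$ with $\Delta\not\in C$, so $\Delta\subsetneq(b_{k_1},b_{k_1-1})$ for a unique $k_1$. I set $\tilde\Delta^{(0)}=\Delta$ and iteratively $\tilde\Delta^{(l+1)}:=g_{k_{l+1}}^{-1}(\tilde\Delta^{(l)})$ as long as $\tilde\Delta^{(l)}\subsetneq(b_{k_{l+1}},b_{k_{l+1}-1})$, stopping at the first index $l^*\geq 1$ for which either $\tilde\Delta^{(l^*)}\in C$ or $|\tilde\Delta^{(l^*)}|\geq\delta$. The uniform bound $|g_k'|<\alpha<1$ makes each pullback expand diameters by a factor at least $1/\alpha$, so $l^*$ is finite. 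Writing $\omega=[k_1,\dots,k_{l^*}]$, the $h_n$-conformality of $m_n$ recorded in \eqref{eq: conformal measure} together with \Cref{lem: distortion lemmas} yields
\[
\frac{m_n(\Delta)}{|\Delta|^{h_n}} \leq \kappa(g_\omega|_{\tilde\Delta^{(l^*)}})^{h_n}\cdot\frac{m_n(\tilde\Delta^{(l^*)})}{|\tilde\Delta^{(l^*)}|^{h_n}},
\]
and the density factor on the right is at most $1+\varepsilon/3$ for $n$ large, by whichever of \Cref{lem: ekstremalnosc n} or \Cref{lem:duzeprzedzialy} applies to $\tilde\Delta^{(l^*)}$.

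The main technical obstacle is controlling the distortion factor in the sub-case $|\tilde\Delta^{(l^*)}|\geq\delta$, since a direct application of \Cref{lem: distortion lemma additional} to the full word $\omega$ then only yields a universal constant, not something close to $1$. I resolve this by the chain-rule splitting
\[
\kappa\bigl(g_\omega|_{\tilde\Delta^{(l^*)}}\bigr) \leq \kappa\bigl(g_{[k_1,\dots,k_{l^*-1}]}\big|_{\tilde\Delta^{(l^*-1)}}\bigr)\cdot\kappa\bigl(g_{k_{l^*}}\big|_{\tilde\Delta^{(l^*)}}\bigr),
\]
immediate from the chain rule for $g_\omega'$. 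By minimality of $l^*$, $|\tilde\Delta^{(l^*-1)}|<\delta$, so the first factor is at most $1+\varepsilon/5$ by \Cref{lem: distortion lemma additional}. For the second factor I branch on $k_{l^*}$: if $k_{l^*}>K_0$ then $\kappa(g_{k_{l^*}})\leq 1+\varepsilon/5$ by the choice of $K_0$; if $k_{l^*}\leq K_0$ the distortion estimate $\inf|g_{k_{l^*}}'|\geq (b_{k_{l^*}-1}-b_{k_{l^*}})/\kappa(g_{k_{l^*}})\geq \eta_0/K'$ combined with $|\tilde\Delta^{(l^*-1)}|<\delta$ forces $|\tilde\Delta^{(l^*)}|<\delta K'/\eta_0$, and the choice of $\delta$ again gives $1+\varepsilon/5$. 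Multiplying the distortion factors and combining with the density bound yields $(1+\varepsilon/5)^2(1+\varepsilon/3)\leq 1+\varepsilon$, absorbing the power $h_n\leq 1$; the sub-case $|\tilde\Delta^{(l^*)}|<\delta$ is simpler because \Cref{lem: distortion lemma additional} then bounds the full $\kappa(g_\omega|_{\tilde\Delta^{(l^*)}})$ by $1+\varepsilon/5$ in one shot.
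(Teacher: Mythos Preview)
Your proof is correct and follows essentially the same strategy as the paper: iterate $f$ until the image interval either lands in the family $C$ (so \Cref{lem: ekstremalnosc n} applies) or becomes long (so \Cref{lem:duzeprzedzialy} applies), and transfer the density estimate back to $\Delta$ via a distortion bound from \Cref{lem: distortion lemma additional}. The only organizational difference concerns the ``last step'' when the image first becomes long. The paper isolates this in an auxiliary lemma (\Cref{lem: lemat pomocniczy do W}): it fixes a threshold $M$ on the expansion ratio, and the two subcases are $|\tilde\Delta^{(l^*-1)}|<\delta/M$ (forcing $k_{l^*}$ large, hence $\kappa(g_{k_{l^*}})$ small) versus $|\tilde\Delta^{(l^*-1)}|\geq\delta/M$ (so \Cref{lem:duzeprzedzialy} already applies one step earlier). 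You instead branch directly on whether $k_{l^*}>K_0$, and in the complementary case use that the finitely many maps $g_k$ with $k\leq K_0$ have derivative bounded below, forcing $|\tilde\Delta^{(l^*)}|<\delta K'/\eta_0$ so that \Cref{lem: distortion lemma additional} still controls $\kappa(g_{k_{l^*}}|_{\tilde\Delta^{(l^*)}})$. The two splits are dual to each other---your argument effectively inlines the content of \Cref{lem: lemat pomocniczy do W}---and neither offers a substantive advantage over the other.
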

Proof of this proposition requires simple, auxiliary lemma for IFS fulfilling the conditions \eqref{warunek dodatkowy 3} - \eqref{eq: warunek 6}.
\begin{lemma}\label{lem: lemat pomocniczy do W}
For every $\varepsilon>0$ there exists $M>0$ such that if $I \subset [0,1]$ is an interval, $J = g_k(I)$ and $|I| > M |J|$, then $k$ is so large that the distortion of the map $g_k$ satisfies $\kappa(g_k) < 1 +\varepsilon$. 
\end{lemma}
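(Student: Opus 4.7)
The plan is to prove the contrapositive: I will show that if $k$ is one of the (finitely many) indices for which $\kappa(g_k) \geq 1+\varepsilon$, then one cannot have $|I|>M|J|$, provided $M$ is chosen large enough. This reduces the lemma to finding a uniform positive lower bound on $|g_k'|$ over a finite range of indices.

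First I would invoke Condition \eqref{eq: warunek 5}, which gives $\lim_{k\to\infty}\kappa(g_k)=1$. Consequently there exists a threshold $K_0=K_0(\varepsilon)\in\N$ such that $\kappa(g_k)<1+\varepsilon$ for every $k\geq K_0$. Thus it suffices to exclude the finite set of indices $k\in\{1,\dots,K_0-1\}$.

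For each such $k$, the map $g_k$ is a $C^2$ diffeomorphism on a neighborhood of the compact interval $[0,1]$, so $|g_k'|$ is continuous and strictly positive there; hence $\delta_k:=\inf_{x\in[0,1]}|g_k'(x)|>0$. Setting $\delta:=\min_{1\leq k<K_0}\delta_k>0$, I would then choose
\[
M\;>\;\tfrac{1}{\delta}.
\]
The rest is an application of the mean value theorem: for any $k<K_0$ and any interval $I\subset[0,1]$, $|g_k(I)|=|g_k'(\xi)|\,|I|\geq \delta\,|I|$ for some $\xi\in I$, so $|J|/|I|\geq\delta>1/M$, and in particular the inequality $|I|>M|J|$ fails. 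By contraposition, if $|I|>M|J|$ then necessarily $k\geq K_0$, and so $\kappa(g_k)<1+\varepsilon$, as desired.

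I do not anticipate any serious obstacle: the statement is essentially a routine consequence of the two facts that the distortion of $g_k$ tends to $1$ and that each individual $g_k$ (being a $C^2$ diffeomorphism on $[0,1]$) has derivative bounded away from zero. The only slightly subtle point is that one must fix the finite collection of ``bad'' indices \emph{before} choosing $M$, since the lower bound $\delta$ on $|g_k'|$ depends on $K_0$, which in turn depends on $\varepsilon$; but this is simply the correct order of quantifiers.
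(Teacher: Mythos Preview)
Your proof is correct. The paper in fact omits the proof of this lemma entirely, calling it a ``simple, auxiliary lemma'' and moving directly to the proof of \Cref{prop: final from above}; your argument via Condition~\eqref{eq: warunek 5} and a uniform positive lower bound on $|g_k'|$ over the finitely many remaining indices is precisely the natural justification the authors leave implicit.
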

\begin{proof}[Proof of \Cref{prop: final from above}]
Fix $\varepsilon \in (0,1)$ and choose $\delta >0$ such that the distortion of every map $g_\omega$ restricted to an interval of length $\delta$ is less than $1 + \frac{\varepsilon}{4}$. This is possible due to \Cref{lem: distortion lemma additional}. Choose $M = M(\varepsilon)$ satisfying \Cref{lem: lemat pomocniczy do W}. Let $\Delta \in \mathcal{W}$ be an arbitrary closed interval. If $m_n(\Delta) = 0$ then, obviously $\frac{m_n(\Delta)}{|\Delta|^{h_n}} < 1 + \varepsilon$. 
For $\Delta \in \mathcal{W}$ such that $|\Delta| > \frac{\delta}{M}$, we apply \Cref{lem:duzeprzedzialy} and find $n_1 \in \N$ such that for all $\Delta \in \mathcal{W}$ with $|\Delta| > \frac{\delta}{M}$ and $n > n_1$
\[
\frac{m_n(\Delta)}{|\Delta|^{h_n}} \leq 1 + \varepsilon.
\]
Thus, we may assume that $|\Delta| \leq \frac{\delta}{M}$ and $m_n(\Delta) >0$.

\par Denote by $k = k(\Delta)$ the least integer such that $f^k(\Delta)$ contains some point $b_j$, $j \in \N$, and put $\Delta^{(1)} = f^{k}(\Delta)$. Note that $f^k: \Delta \to \Delta^{(1)}$ is a continuous bijection and its inverse is a restriction of some map of the form $g_\omega: g_\omega(\Delta^{(1)}) = \Delta$. Note also that the interval $\Delta^{(1)}$ is in the family $C$ from \Cref{lem: ekstremalnosc n}.
Now, we consider two cases.
\par Case 1. $|\Delta^{(1)}| < \delta$. We will use the following observation.
\begin{observation}\label{obs: delta to delta 1}
If $|\Delta^{(1)}| < \delta$ then 
\[
\frac{m_n(\Delta)}{|\Delta|^{h_n}}:\frac{m_n(\Delta^{(1)})}{|\Delta^{(1)}|^{h_n}} \leq (1 +\frac{\varepsilon}{4})^{h_n} \leq 1 +\frac{\varepsilon}{4}.
\]
This follows from \Cref{lem: distortion lemma additional}. Indeed,
\[
m_n(\Delta) = \int\limits_{\Delta^{(1)}} |g_\omega'|^{h_n} dm_n \leq \sup \limits_{x \in \Delta^{(1)}}|g_\omega'|^{h_n} m_n(\Delta^{(1)})
\]
and 
\[
|\Delta|^{h_n} = \left( \int\limits_{\Delta^{(1)}} |g_\omega'| dm \right)^{h_n}\geq |\Delta^{(1)}|^{h_n} \inf\limits_{x \in \Delta^{(1)}}|g_\omega'|^{h_n}
\]
where $m$ is the Lebesgue measure.
Hence, due to the choice of $\delta$,
\[
\frac{m_n(\Delta)}{|\Delta|^{h_n}}\leq \frac{m_n(\Delta^{(1)})}{|\Delta^{(1)}|^{h_n}} \left(\frac{\sup\limits_{x \in \Delta^{(1)}}|g_\omega'|}{\inf\limits_{x \in \Delta^{(1)}}|g_\omega'|} \right)^{h_n} \leq \frac{m_n(\Delta^{(1)})}{|\Delta^{(1)}|^{h_n}} (1 + \frac{\varepsilon}{4})^{h_n} \leq 1 + \frac{\varepsilon}{4}.
\]
This ends the proof of \Cref{obs: delta to delta 1}.
\end{observation} 
By \Cref{lem: ekstremalnosc n}, using the fact that $\Delta^{(1)} \in C$, there exists $n_2 \in \N$ such that for every interval in family $C$ (in particular $\Delta^{(1)}$) 
\[
\frac{m_n(\Delta^{(1)})}{|\Delta^{(1)}|^{h_n}} \leq 1 + \frac{\varepsilon}{4}
\]
for every $n > n_2$. Using \Cref{obs: delta to delta 1} we have
\begin{equation}
    \frac{m_n(\Delta)}{|\Delta|^{h_n}}:\frac{m_n(\Delta^{(1)})}{|\Delta^{(1)}|^{h_n}} \leq (1 +\frac{\varepsilon}{4})^{h_n} \leq 1 +\frac{\varepsilon}{4}.
\end{equation}
Hence, for $n > n_2$ we obtain
\[
\frac{m_n(\Delta)}{|\Delta|^{h_n}} \leq (1 +\frac{\varepsilon}{4})^2< 1 + \varepsilon
\]
for every interval $\Delta$ for which Case 1. holds.
\par Case 2. $|\Delta^{(1)}|\geq \delta$. Then denote by $l := l(\Delta) \leq k$ the least integer such that $|f^l(\Delta)|\geq \delta$. Denote $\Delta^{(2)} = f^l(\Delta)$ and $\Delta^{(3)} = f^{l-1}(\Delta)$. Then $|\Delta^{(3)}|< \delta$. Now, there are two subcases.
\par Subcase 2a. $|\Delta^{(3)}|< \frac{\delta}{M}$. This implies that the distortion of the map $f: \Delta^{(3)} \to \Delta^{(2)}$ is smaller than $1+\frac{\varepsilon}{4}$ by \Cref{lem: lemat pomocniczy do W}. Since the distortion of $f^{l-1}: \Delta \to \Delta^{(3)}$ is also smaller than $1+\frac{\varepsilon}{4}$ (by the choice of $\delta$, the same proof as in \Cref{obs: delta to delta 1}), we obtain that 
\begin{equation}
\frac{m_n(\Delta)}{|\Delta|^{h_n}}:\frac{m_n(\Delta^{(2)})}{|\Delta^{(2)}|^{h_n}} \leq (1 +\frac{\varepsilon}{4})^{h_n} \leq 1 +\frac{\varepsilon}{4}.
\end{equation}
Now, recall that there exists $n_3 \in \N$ such that for every interval $J$ of length at least $\delta$ and for every $n \geq n_3$ there holds
\[
\frac{m_n(J)}{|J|^{h_n}}\leq (1 + \frac{\varepsilon}{4})^{h_n}
\]
by \Cref{prop: uproszczony nieliniowy}. In particular it applies to $\Delta^{(2)}$, so for all $n \geq n_3$ we have 
\[
\frac{m_n(\Delta^{(2)})}{|\Delta^{(2)}|^{h_n}} \leq (1 + \frac{\varepsilon}{4})^{h_n}< 1 + \frac{\varepsilon}{4}.
\]
Thus,
\[
\frac{m_n(\Delta)}{|\Delta|^{h_n}} < (1 + \frac{\varepsilon}{4})^{3}  < 1 +\varepsilon
\]
for all $\Delta \in \mathcal{W}$ for which Subcase 2a. holds. 
\par Subcase 2b. $|\Delta^{(3)}| \geq \frac{\delta}{M}$. Then, again by \Cref{prop: uproszczony nieliniowy}, there exists $n_4 \in \N$ such that for every interval with length $\geq \frac{\delta}{M}$ (in particular for $\Delta^{(3)}$ ), we have 
\[
\frac{m_n(\Delta^{(3)})}{|\Delta^{(3)}|^{h_n}} \leq (1 + \frac{\varepsilon}{4})^{h_n} < 1 + \frac{\varepsilon}{4}
\]
for all $n \geq n_4$. On the other hand, the length $|\Delta^{(3)}| < \delta$, so the distortion $f^{l-1}: \Delta \to \Delta^{(3)}$ is at most $1+\frac{\varepsilon}{4}$, and, as in previous cases, we conclude that 
\[
\frac{m_n(\Delta)}{|\Delta|^{h_n}} < 1 +\varepsilon
\]
for all $n \geq n_4$ and for all intervals $\Delta \in \mathcal{W}$ for which Subcase 2b. holds. Setting $n_0 = \max\{n_1, n_2, n_3, n_4\}$ concludes the proof of \Cref{prop: final from above}.

\end{proof}
\subsection{Estimate from above}
To complete the proof, we need one more estimate, namely
\begin{prop}\label{prop: nonlinear estimate from above}   
\[
\limsup\limits_{n\to\infty}H_{h_n}(J_n) \leq 1
\]
\end{prop}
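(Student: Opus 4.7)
The strategy is to adapt the $k$-th generation cover argument from the linear case (Theorem~\ref{gora}). For the linear IFS, the cover sum $\sum_{|\omega|=k}|g_\omega([0,1])|^{h_n}$ equals $1$ exactly, because of the $h_n$-pressure equation. In the non-linear case, distortion enters, and we must work to keep this sum close to $1$.

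Concretely, I would proceed as follows. Cover $J_n$ by the family $\{g_\omega([0,1]): \omega \in \{1,\dots,n\}^k\}$, which by condition~\eqref{eq: warunek 6} has diameters at most $\alpha^k \to 0$, so it is arbitrarily fine for large $k$. Combine three estimates: (a) $h_n$-conformality plus the OSC gives $\sum_{|\omega|=k}m_n(g_\omega([0,1])) = m_n(J_n) = 1$; (b) the distortion lemma (Lemma~\ref{lem: distortion lemmas}) together with $m_n(g_\omega([0,1])) = \int|g_\omega'|^{h_n}\,dm_n \geq (\inf|g_\omega'|)^{h_n}$ yields
\[
|g_\omega([0,1])|^{h_n} \leq \kappa(g_\omega|_{[0,1]})^{h_n}\, m_n(g_\omega([0,1]));
\]
(c) summing gives $\sum|g_\omega([0,1])|^{h_n} \leq K^{h_n}$ with $K = \exp(c/(1-\alpha))$ the uniform distortion bound coming from Lemma~\ref{lem: distortion lemma additional}.

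This uniform bound alone would only produce $\limsup H_{h_n}(J_n) \leq K$, which is not sharp enough. To sharpen it to $1+\varepsilon$, I would refine the cover at a finer scale: subdivide $[0,1]$ into small subintervals $\{I_j\}_{j=1}^N$ of length $\leq 1/N$ and use the cover $\{g_\omega(I_j): |\omega|=k,\; j=1,\dots,N\}$. On each small $I_j$, Lemma~\ref{lem: distortion lemma additional} gives $\kappa(g_\omega|_{I_j}) \leq 1+\varepsilon_N$ with $\varepsilon_N \to 0$ as $N \to \infty$, independently of $\omega$. The resulting estimate takes the shape
\[
\sum_{\omega,j}|g_\omega(I_j)|^{h_n} \leq (1+\varepsilon_N)^{h_n} N^{1-h_n} \int \sum_{|\omega|=k}|g_\omega'|^{h_n}\,dx,
\]
where the $N^{1-h_n}$ factor arises from Jensen (Lemma~\ref{lem: oszacowanie na sume wi}). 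Using that $\int\sum_\omega|g_\omega'|^{h_n}\,dm_n = 1$ and that $m_n$ converges weakly to Lebesgue measure (Lemma~\ref{lem: sequence mn weakly convergent to leb}), the Lebesgue integral on the right also tends to $1$ as $n \to \infty$, while $N^{1-h_n} \to 1$ by condition~\eqref{warunek dodatkowy 3}.

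The main obstacle is coordinating the three parameters $k$, $N$, and $n$ simultaneously: $k$ must be large enough for the cover to have diameter below any prescribed $\delta$; $N$ must be large enough that the per-piece distortion factor $(1+\varepsilon_N)$ is close to $1$; and $n$ must be large enough that both $N^{1-h_n}$ and the terms governed by the Jensen step (which contribute factors like $n^{k(1-h_n)}$) stay close to $1$. The compatibility of these requirements is exactly what condition~\eqref{warunek dodatkowy 3}, namely $(1-h_n)\log n \to 0$, buys us; and the passage from the refined cover estimate to the bound $H_{h_n}(J_n) \leq 1+\varepsilon$ mirrors the three-step ``assembly'' done in Proposition~\ref{prop: final from above}. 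Taking $\varepsilon \to 0$ then yields $\limsup_{n\to\infty}H_{h_n}(J_n)\le 1$.
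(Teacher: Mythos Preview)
Your covering strategy is viable and genuinely different from the paper's argument. The paper does not refine covers at all; instead it invokes the density characterisation of Hausdorff measure (Theorem~\ref{gestosc}) together with ergodic theory. It takes the $f$-invariant ergodic measure $\mu_n\sim m_n$ and uses Birkhoff's theorem to guarantee that for $\mu_n$-a.e.\ $x$ there are infinitely many times $k_j$ with $f^{k_j}(x)\in[b_n,b_{n-1}]$. The pull-backs $I_j(x)=g_\omega([b_n,b_{n-1}])$ are then arbitrarily small intervals around $x$ with $m_n(I_j)/|I_j|^{h_n}\ge 1-\varepsilon$, because $[b_n,b_{n-1}]$ is short (so the distortion of $g_\omega$ on it is close to $1$ by Lemma~\ref{lem: distortion lemma additional}) and $\kappa(g_n)\to 1$ forces the base ratio $m_n([b_n,b_{n-1}])/|b_{n-1}-b_n|^{h_n}$ to be close to $1$. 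Your approach trades this ergodic input for a direct cover estimate, which is more elementary in spirit.

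There is, however, a genuine gap in your weak-convergence step. From $\int\psi_n\,dm_n=1$ (with $\psi_n=\sum_{|\omega|=k}|g_\omega'|^{h_n}$) and $m_n\to m$ weakly you cannot conclude $\int\psi_n\,dx\to 1$: the integrand depends on $n$ through both $h_n$ and the index set $\{1,\dots,n\}^k$, and weak convergence controls $\int\phi\,d(m-m_n)$ only for \emph{fixed} $\phi$. The clean repair is to avoid the Lebesgue integral altogether. From the distortion bound on $I_j$ and conformality of $m_n$,
\[
|g_\omega(I_j)|^{h_n}\le (1+\varepsilon_N)^{h_n}(\inf_{I_j}|g_\omega'|)^{h_n}|I_j|^{h_n}\le(1+\varepsilon_N)^{h_n}\frac{|I_j|^{h_n}}{m_n(I_j)}\,m_n\bigl(g_\omega(I_j)\bigr),
\]
so summing over $\omega,j$ gives $\sum_{\omega,j}|g_\omega(I_j)|^{h_n}\le(1+\varepsilon_N)^{h_n}\max_j\bigl(|I_j|^{h_n}/m_n(I_j)\bigr)$. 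For \emph{fixed} $N$ the weak convergence of Lemma~\ref{lem: sequence mn weakly convergent to leb} does give $m_n(I_j)\to|I_j|$ for each of the finitely many $j$, while $|I_j|^{h_n-1}=N^{1-h_n}\to 1$ simply because $h_n\to 1$. Hence the right-hand side is $\le 1+\varepsilon$ for all large $n$, uniformly in $k$, and letting $k\to\infty$ yields $H_{h_n}(J_n)\le 1+\varepsilon$. This closes the argument without any Jensen step or $n^{k(1-h_n)}$ factor; those parts of your sketch are unnecessary and can be dropped.
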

This proposition is the nonlinear analogue of \Cref{gora}.
\begin{proof}
This part of the proof is considerably easier. First, note that every finitely generated iterated function system $S_n$ fulfills assumptions of well known fact that there exists $f-$invariant, ergodic probability measure $\mu_n$ equivalent to the conformal measure. Its proof can be found, for example, in the paper by Mauldin and Urbański \cite{urbanski-mauldin} - Theorem 3.8. This measure can be thought of both on the symbolic space, and on the set $J_n$. 
Now, the set $J_n$ is $f-$ invariant, meaning $f(J_n) = J_n$. This is obvious by the definition of the set $J_n$.
\par Thus for almost every $x \in J_n$ there exists infinitely many integers $k_j = k_j(x)$ such that $f^{k_j}(x) \in [b_{n}, b_{n-1}]$. This follows immediately from Birkhoff's ergodic theorem, the ergodicity of the measure $\mu_n$ and the fact that the set $[b_{n}, b_{n-1}]$ has positive measure. Hence there exists a Borel set $B_n \subset J_n$ with $\mu_n(B_n) = 1$ such that for every $x \in B_n$ there exists infinitely many $k_j(x)$ such that $f^{k_j}(x) \in [b_{n}, b_{n-1}]$. Fix $\varepsilon>0$. By \Cref{lem: distortion lemma additional} there exists $N_\varepsilon \geq 1$ such that
\[
\kappa(g_\omega|_{[b_{n,b_n-1}]}) < \frac{1}{1-\frac{\varepsilon}{3}}
\]
for every $n \geq N_\varepsilon$, every $x \in B_n$ and all $\omega \in \N^*$. Moreover, there exists $n_\varepsilon$ such that for all $n > n_\varepsilon$
\[
\frac{m_n([b_n, b_{n-1}])}{|b_n- b_{n-1}]^{h_n}} > 1 - \frac{\varepsilon}{3}.
\]
This is a consequence of the fact that $\kappa(g_n) \to 1$ as $n \to \infty$.
Take $n \geq n_1 = \max\{n_\varepsilon, N_\varepsilon\}$. Let $g_\omega^{(j)} = g_{i_1}\circ \dots \circ g_{k_j}$ be such a composition of maps $g_i$ such that for $x \in B_n$ $x \in g_\omega([b_n, b_{n-1}])$. Let $I_j(x)$ be an interval $I_j(x) = g_\omega([b_n, b_{n-1}])$.
Then, by apply \Cref{lem: distortion lemma additional}, we obtain
\[
\frac{m_n(I_j(x))}{|I_j(x)|^{h_n}} \geq \frac{\inf\limits_{[b_n, b_{n-1}]}|g_\omega'|^{h_n}}{\sup\limits_{[b_n, b_{n-1}]}|g_\omega'|^{h_n}} \cdot \frac{m_n([b_n,b_{n-1}])}{|b_n-b_{n-1}|^{h_n}} > (1-\varepsilon/3)^2 > 1- \varepsilon.
\]
Thus, we have demonstrated the existence of arbitrarily small interval containing $x$, such that the ratio of its measure to diameter raised to $h_n$ is arbitrarily close to 1 from below.
Hence, we can deduce that
\[
\limsup\limits_{r \to 0} \left \{ \frac{m_n(B_n)}{\operatorname{diam}^{h_n}(B_n)}: \operatorname{diam}(B_n) \leq r \right \} \geq 1 - \varepsilon.
\]
Because $m_n(B_n) = \mu_n(B_n) = 1$ due to \Cref{density}, this implies that $H_{h_n}(J_n) \leq \frac{1}{1 - \varepsilon}$ for all $n \geq n_1$.
\end{proof}
Now, we can finally combine \Cref{prop: nonlinear estimate from above} together with \Cref{prop: final from above} to get the final theorem.
\begin{thm}\label{thm: limit of the measure}
Let $J_n$ be the limit set of the IFS $S_n$ fulfilling the conditions \eqref{warunek dodatkowy 2} -  \eqref{eq: warunek 6}. Then
\[
\lim \limits_{n \to \infty} H_{h_n}(J_n) = 1
\]
where $h_n$ is the Hausdorff dimension of $J_n$ and $H_{h_n}$ is the Hausdorff measure of $J_n$ in its dimension.
\end{thm}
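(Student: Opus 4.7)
The plan is to assemble the theorem by gluing together the two preparatory results proven just above. Proposition~\ref{prop: final from above} supplies a uniform upper bound on the densities $m_n(\Delta)/|\Delta|^{h_n}$ over all closed subintervals of $[0,1]$; Proposition~\ref{prop: nonlinear estimate from above} already gives $\limsup_{n\to\infty}H_{h_n}(J_n)\leq 1$ directly. The bridge that converts the density bound into a lower bound on the Hausdorff measure is Theorem~\ref{gestosc}. So the structure is: upper bound for free, lower bound via density theorem applied with the estimate from Proposition~\ref{prop: final from above}.

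To get the lower bound, first I would fix $\varepsilon>0$ and invoke Proposition~\ref{prop: final from above} to obtain $n_0\in\N$ such that for every $n\geq n_0$ and every closed interval $F\subset[0,1]$,
\[
m_n(F)\leq (1+\varepsilon)|F|^{h_n}.
\]
Since $F\cap J_n\subset F$, the measure $m_n$ is monotone, hence $m_n(F\cap J_n)\leq m_n(F)$, which rearranges to
\[
\frac{|F|^{h_n}}{m_n(F\cap J_n)}\geq \frac{1}{1+\varepsilon}
\]
for every closed interval $F$ and every $n\geq n_0$. This inequality is the key input.

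Next I would apply Theorem~\ref{gestosc} with $X=J_n$, which is a subset of $[0,1]$ with finite and positive $h_n$-dimensional Hausdorff measure (by Theorem~4.2.11 of~\cite{Mauldin_Urbanski_2003}, as recalled after the list of assumptions), and with $H_{h_n}^1=m_n$ the normalized Hausdorff measure. The theorem asserts that for $m_n$-almost every $x\in J_n$,
\[
H_{h_n}(J_n)=\lim_{r\to 0}\inf\left\{\frac{|F|^{h_n}}{m_n(F\cap J_n)}:x\in F,\ F\subset[0,1]\text{ closed interval},\ |F|\leq r\right\}.
\]
Picking any such $x$ and any $n\geq n_0$, each $F$ in the infimum satisfies the bound above, so the infimum is at least $1/(1+\varepsilon)$, and therefore $H_{h_n}(J_n)\geq 1/(1+\varepsilon)$ for all $n\geq n_0$. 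Letting $\varepsilon\to 0$ gives $\liminf_{n\to\infty}H_{h_n}(J_n)\geq 1$.

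Combining this with $\limsup_{n\to\infty}H_{h_n}(J_n)\leq 1$ from Proposition~\ref{prop: nonlinear estimate from above} yields $\lim_{n\to\infty}H_{h_n}(J_n)=1$, as required. The real work has already been absorbed into the two preparatory propositions, so this final step is essentially a packaging argument; I do not expect an obstacle here beyond verifying that the hypotheses of Theorem~\ref{gestosc} are met for each $J_n$, which is immediate from the regularity of the conformal IFS $S_n$.
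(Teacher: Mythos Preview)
Your proposal is correct and follows the same approach as the paper, which simply states that the theorem is obtained by combining Proposition~\ref{prop: nonlinear estimate from above} with Proposition~\ref{prop: final from above}. You have spelled out the bridging step via Theorem~\ref{gestosc} more explicitly than the paper does, but the argument is the intended one; note as a minor simplification that $m_n(F\cap J_n)=m_n(F)$ holds by definition of $m_n$, so the monotonicity remark is unnecessary.
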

\section{Examples}\label{section: nonlinear examples}
In this section, we will provide a wide range of iterated function systems, which fulfill conditions \eqref{warunek dodatkowy 3} - \eqref{eq: warunek 6}. Most of those conditions are straightforward to check. The one which can be hard to check is the condition \eqref{warunek dodatkowy 3}, as we need an estimate on how fast the Hausdorff dimension approaches 1. This problem however is solved by Theorem 3.3 from \cite{urbanski-heinemann} or equivalently from Theorem 6.2.3 from \cite{Mauldin_Urbanski_2003}. We will cite the version from the latter source, adapted to our case.
\begin{thm}\label{thm: dimension asymptotics}
Let $S$ be a conformal iterated function system. Assume there exists $0<h_0<1$ such that for all $h_0<h\leq 1 $ the following holds
\[
\sum\limits_{k=1}^\infty ||g_k'||^h <  \infty.
\]
Then, for all $h \geq h_n$, there exists a constant $C>0$ independent of $n$, such that the following inequality is satisfied
\[
1-h_n \leq C \cdot \sum\limits_{k = n+1}^\infty ||g_k'||^h.
\]
\end{thm}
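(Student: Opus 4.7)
The theorem is the conformal IFS analogue of \Cref{thm: 623 mauldin urbanski wymiar}, and I would prove it by lifting the linear argument to the topological pressure
\[
P(t) \;:=\; \lim_{m\to\infty}\tfrac{1}{m}\log\sum_{|\omega|=m}\|g'_\omega\|_\infty^{\,t},
\]
together with its truncated analogue $P_n$. The summability hypothesis makes both functions well defined on a neighbourhood of $1$, and standard thermodynamic formalism (see \cite{Mauldin_Urbanski_2003}) gives strict convexity, continuous differentiability, and strict monotone decrease. Bowen's formula for conformal IFS (Theorem~4.2.13 in \cite{Mauldin_Urbanski_2003}) yields $P_n(h_n)=0$ for each $n$, and because the images $g_k([0,1])=[b_k,b_{k-1}]$ tile $(0,1]$ the full limit set has Hausdorff dimension $1$, so $P(1)=0$.

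The next input is a lower bound $|P'(1)|\ge c>0$ independent of $n$, coming from $P'(1)=-\int\log|f'|\,d\mu$ for $\mu$ the equilibrium state at $t=1$. Convexity of $P$ then gives $|P'(t)|\ge c$ and $e^{P(t)}\ge 1$ on $[h_n,1]$, so exactly as in \Cref{thm: 623 mauldin urbanski wymiar},
\[
e^{P(h_n)}-1 \;=\; e^{P(h_n)}-e^{P(1)} \;=\; -\int_{h_n}^{1} P'(t)\,e^{P(t)}\,dt \;\ge\; c\,(1-h_n),
\]
yielding $1-h_n\le c^{-1}\bigl(e^{P(h_n)}-1\bigr)$.

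The main obstacle is the final step: controlling $e^{P(h_n)}-e^{P_n(h_n)}=e^{P(h_n)}-1$ by the tail sum $\sum_{k>n}\|g'_k\|^{h}$. In the linear case this was immediate because $e^{P(t)}=\sum_k a_k^t$ and $e^{P_n(t)}=\sum_{k\le n}a_k^t$, so their difference was literally the tail. In the nonlinear setting bounded distortion, which follows from \eqref{eq: warunek 4} and \eqref{eq: warunek 5} via \Cref{lem: distortion lemma additional}, provides only
\[
K^{-t}\sum_{k=1}^{\infty}\|g'_k\|^{\,t} \;\le\; e^{P(t)} \;\le\; \sum_{k=1}^{\infty}\|g'_k\|^{\,t},
\]
and a direct subtraction would leave a residual constant independent of $n$. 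The fix is to carry out the comparison at level $m$ rather than level $1$: the combinatorial tail at depth $m$ factors as $C_m\bigl(\sum_{k>n}\|g'_k\|^{h_n}\bigr)\bigl(\sum_k\|g'_k\|^{h_n}\bigr)^{m-1}$ up to distortion, and after taking $m$-th roots and $m\to\infty$ the distortion factor is diluted to $1$. A final application of $\|g'_k\|\le 1$ allows one to pass between the exponents $h_n$ and $h$, and absorbing all multiplicative constants into a single $C$ independent of $n$ produces $1-h_n\le C\sum_{k>n}\|g'_k\|^{h}$, completing the proof.
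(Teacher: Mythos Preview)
The paper does not actually prove this theorem: it is quoted from the literature (Theorem~6.2.3 in \cite{Mauldin_Urbanski_2003}, equivalently Theorem~3.3 in \cite{urbanski-heinemann}) and stated without proof, so there is no in-paper argument to compare against. Your overall scheme---define $P$ and $P_n$ via pressure, use Bowen's formula to get $P_n(h_n)=0$ and $P(1)=0$, then exploit convexity and $e^{P(t)}\ge 1$ on $[h_n,1]$ to obtain $e^{P(h_n)}-1\ge |P'(1)|\,(1-h_n)$---is exactly the nonlinear lift of the paper's linear \Cref{thm: 623 mauldin urbanski wymiar} and is the right template.

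The gap is in your final step. You correctly identify that, unlike the linear case, $e^{P(h_n)}-e^{P_n(h_n)}$ is not literally the tail $\sum_{k>n}\|g_k'\|^{h_n}$, and that naive distortion bounds leave a residual constant. But your proposed fix---``the tail at depth $m$ factors as $C_m\,T_n\,\psi^{m-1}$, then take $m$-th roots and let $m\to\infty$''---does not close. Taking $m$-th roots does not interact well with \emph{differences}: knowing $Z_m(h_n)-Z_{n,m}(h_n)\le C_m\,T_n\,\psi(h_n)^{m-1}$ and passing to $m$-th roots gives no control on $e^{P(h_n)}-1$. What actually works is a recursion: decomposing words of length $m$ by the \emph{first} index exceeding $n$ and using submultiplicativity gives
\[
Z_m(h_n)\;\le\; Z_{n,m}(h_n)\;+\;T_n(h_n)\sum_{j=1}^{m}Z_{n,j-1}(h_n)\,Z_{m-j}(h_n),
\]
and since $P_n(h_n)=0$ the finite-system partition sums satisfy $Z_{n,m}(h_n)\le c_2$ uniformly in $m$ (Perron--Frobenius for the finite transfer operator). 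This yields $Z_m(h_n)\le c_2 + c_2\,T_n(h_n)\sum_{i<m}Z_i(h_n)$, which forces $e^{P(h_n)}=\lim Z_m(h_n)^{1/m}\le 1+c_2\,T_n(h_n)$. Combined with your lower bound this gives $1-h_n\le (c_2/|P'(1)|)\sum_{k>n}\|g_k'\|^{h_n}$, and monotonicity in the exponent lets you replace $h_n$ by any $h\le h_n$. So the strategy is sound, but the mechanism you describe for the crucial tail estimate needs to be replaced by this recursion (or, equivalently, by a direct spectral-radius comparison of the full and truncated transfer operators, which is how \cite{Mauldin_Urbanski_2003} proceeds).
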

This theorem lets us fairly easily check if our IFS satisfies condition \eqref{warunek dodatkowy 3}, because
\[
\lim\limits_{n\to \infty} (1-h_n)\ln n \leq \ln(n) \cdot  \sum\limits_{k = n+1}^\infty ||g_k'||^h.
\]
Applying this result lets us check, that the subset of $[0,1]$ of all numbers whose infinite continued fraction expansions have all entries in the finite set $\{1, 2, \dots , n\}$, indeed fulfills the assumptions \eqref{warunek dodatkowy 3} - \eqref{eq: warunek 5} of \Cref{thm: limit of the measure}. Condition \eqref{eq: warunek 6} is not immediately fulfilled by this iterated functions system, as the derivative of Gauss map $|g_n'| = \frac{1}{(n+x)^2}$ is equal to 1 for $g_1(0)$. However, this is not an issue, as in the second iteration of this IFS does not have this issue and indeed fulfills \eqref{eq: warunek 6}.

\bibliographystyle{abbrv}
\bibliography{main}

\end{document}